\theoremstyle{plain}
\newtheorem{theorem}{Theorem}[section]
\newtheorem{prop}[theorem]{Proposition}
\newtheorem{cor}[theorem]{Corollary}
\newtheorem{lem}[theorem]{Lemma}
\theoremstyle{definition}
\newtheorem{remark}[theorem]{Remark}
\newtheorem{definition}[theorem]{Definition}
\numberwithin{equation}{section}
\newcommand{\BC}{{\mathbb{C}}}
\newcommand{\BD}{{\mathbb{D}}}
\newcommand{\BF}{{\mathbb{F}}}
\newcommand{\BP}{{\mathbb{P}}}
\newcommand{\BQ}{{\mathbb{Q}}}
\newcommand{\BR}{{\mathbb{R}}}
\newcommand{\BZ}{{\mathbb{Z}}}
\newcommand{\MA}{\mathcal{A}}
\newcommand{\ME}{\mathcal{E}}
\newcommand{\MJ}{\mathcal{J}}
\newcommand{\MM}{\mathcal{M}}
\newcommand{\MO}{\mathcal{O}}
\newcommand{\MU}{\mathcal{U}}
\newcommand{\MV}{\mathcal{V}}
\newcommand{\on}[1]{\operatorname{#1}}
\newcommand{\wt}[1]{\widetilde{#1}}
\newcommand{\ol}[1]{\overline{#1}}
\begin{document}

\author[O.~Edtmair]{O.~Edtmair}
\address{Department of Mathematics\\University of California at Berkeley\\Berkeley, CA\\94720\\USA}
\email{oliver\_edtmair@berkeley.edu}
%-------------------------------------------------------------------------------
\title[An elementary alternative to PFH spectral invariants]{An elementary alternative to PFH spectral invariants}

\begin{abstract}
Inspired by Hutchings' elementary alternative to ECH capacities, we introduce an elementary alternative to spectral invariants defined via periodic Floer homology (PFH). We use these spectral invariants to provide more elementary proofs of a number of results which have recently been obtained using PFH spectral invariants. Among these results are quantitative closing lemmas for area-preserving surface diffeomorphisms and the simplicity conjecture.
\end{abstract}

\maketitle

\tableofcontents

\section{Introduction}

\subsection{Motivation and background}

This paper is motivated by recent work of McDuff-Siegel \cite{MS21} and Hutchings \cite{Hut22}. Both papers provide elementary ersatz definitions of certain sequences of symplectic capacities and show that important applications of the original capacities are recovered by the alternative ones, thus leading to much more elementary proofs. Instead of symplectic capacities, we focus on spectral invariants of area-preserving surface diffeomorphisms. Following the philosophy of \cite{MS21} and \cite{Hut22}, we define an elementary replacement for spectral invariants coming from periodic Floer homology (PFH), whose construction is highly non-trivial and in particular relies on gauge theory. PFH spectral invariants have recently found remarkable applications to $C^0$ symplectic geometry, Hofer geometry and symplectic dynamics in dimension two (see \cite{CHS20}, \cite{CHS21}, \cite{CPZ21}, \cite{EH21}). We provide elementary proofs of some of these applications and significantly reduce the amount of sophisticated machinery used in the background. In particular, we do not appeal to Floer theory at all. If the underlying surface has genus zero, we completely remove Seiberg-Witten theory from our proofs.\\

Let us begin with a brief review of Hutchings' elementary capacities defined in \cite{Hut22}. Recall that a {\it symplectic capacity} $c$ is a function that assigns numbers $c(X,\omega)\in [0,\infty]$ to symplectic manifolds $(X,\omega)$. Symplectic capacities are required to be monotonic under symplectic embeddings and behave linearly with respect to scalings of the symplectic form. More precisely, one requires:
\begin{itemize}
\item (Monotonicity) If $(X,\omega)$ symplectically embeds into $(X',\omega')$, then $c(X,\omega)\leq c(X',\omega')$.
\item (Conformality) For every $r>0$, we have $c(X,r\omega)=rc(X,\omega)$.
\end{itemize}
We refer to Cieliebak-Hofer-Latschev-Schlenk \cite{CHLS07} for a survey on symplectic capacities. Hutchings' elementary capacities are defined on $4$-dimensional symplectic manifolds. They form a family of capacities $c_k^H$ parametrized by positive integers $k$. The construction of $c_k^H$ is based on pseudo-holomorphic curves. Roughly speaking, the value $c_k^H(X,\omega)$ is defined to be the infimum over all real numbers numbers $C$ such that for every compatible almost complex structure $J$ on $(X,\omega)$ and every choice $P\subset X$ of $k$ points, there exists a $J$-holomorphic curve $u$ of symplectic area $\ME(u)$ at most $C$ passing through $P$. More precisely, if $(X,\omega)$ is a closed symplectic $4$-manifold, then
\begin{equation}
\label{eq:def_hutchings_elementary}
c_k^H(X,\omega) \coloneqq \sup_{J,P} \enspace\inf_u \enspace \ME(u)
\end{equation}
where the supremum is taken over all compatible almost complex structures $J$ and all collections of $k$ points $P\subset X$, and the infimum is taken over all compact $J$-curves $u$ passing through $P$ which are non-constant on all components. The symplectic area $\ME(u)$ simply is the integral of $\omega$ over $u$. The definition of $c_k^H$ extends to the case of compact $4$-dimensional Liouville domains $(X,\lambda)$ as follows. We form the completion $\ol{X}$ of $X$ by attaching a cylindrical end $[0,\infty)\times\partial X$ along the contact boundary $(\partial X,\lambda)$ of $X$. The supremum in \eqref{eq:def_hutchings_elementary} is now taken over all sets $P\subset X$ of $k$ points and all compatible almost complex structures on $\ol{X}$ which are in addition symplectization admissible on the cylindrical end $[0,\infty)\times\partial X$. The infimum runs over all $J$-holomorphic curves $u$ in $\ol{X}$ passing through $P$. The domains of these curves are compact Riemann surfaces with finitely many punctures removed and the curves are required to be asymptotic to cylinders $[0,\infty)\times \gamma$ over periodic Reeb orbits $\gamma$ of $(\partial X,\lambda)$ at these punctures. The symplectic area $\ME(u)$ in this setting is replaced by the sum over all the actions $\MA(\gamma)=\int_\gamma\lambda$ of asymptotic Reeb orbits $\gamma$ of $u$. Hutchings shows that the functions $c_k^H$ indeed are symplectic capacities. Moreover, he proves that they satisfy a list of properties analogous to the properties of ECH capacties, which allows for more elementary proofs of important applications of these capacities.

\subsection{Elementary spectral invariants}
\label{subsec:elementary_spectral_invariants}

Let us now sketch how the construction of $c_k^H$ can be adapted to define spectral invariants of area-preserving diffeomorphisms. Before explaining the definition, we introduce our geometric setup. Unless stated otherwise, $\Sigma$ will denote a closed, connected, orientable surface of genus $g$ throughout this paper. We equip it with an area form $\omega$ of total area $A$. Moreover, we fix an area-preserving diffeomorphism $\phi$ of $(\Sigma,\omega)$. The {\it mapping torus} $Y_\phi$ of $\phi$ is the closed $3$-manifold defined by
\begin{equation}
Y_\phi\coloneqq \frac{[0,1]\times\Sigma}{\sim}\qquad (1,x)\sim (0,\phi(x)) \enspace \text{for}\enspace x\in\Sigma.
\end{equation}
It naturally fibres over the circle $\BR/\BZ$ with fibre $\Sigma$. Let $t$ denote the coordinate on the interval $[0,1]$. The vector field $\partial_t$ on $[0,1]\times\Sigma$ descends to a smooth vector field on $Y_\phi$ transverse to the fibres. Let $\omega_\phi$ denote the unique closed $2$-form on $Y_\phi$ which vanishes on $\partial_t$ and whose restriction to the fibres of $Y_\phi\rightarrow \BR/\BZ$ agrees with $\omega$. The dynamics of the vector field $\partial_t$ is closely related to the dynamics of the surface diffeomorphism $\phi$. Indeed, the fibre $\{0\}\times\Sigma$ of $Y_\phi$ is a global surface of section and the first return map is given by $\phi$. The {\it symplectization} of $Y_\phi$ is the symplectic $4$-manifold $(M_\phi,\Omega_\phi)$ defined by
\begin{equation}
\label{eq:symplectization_definition}
M_\phi\coloneqq \BR\times Y_\phi\qquad \text{and}\qquad \Omega_\phi \coloneqq ds\wedge dt + \omega_\phi
\end{equation}
where $s$ denotes the coordinate on $\BR$.\\

We are interested in Hamiltonian deformations of $\phi$. Given a time-dependent Hamiltonian $H:[0,1]\times\Sigma\rightarrow\BR$, the induced Hamiltonian vector field $X_H$ is characterized by
\begin{equation}
\label{eq:hamiltonian_vector_field}
\iota_{X_{H_t}}\omega = dH_t.
\end{equation}
We let $\phi_H^t$ denote the Hamiltonian flow generated by $X_H$ and abbreviate
\begin{equation}
\label{eq:hamiltonian_deformation}
\phi_H\coloneqq \phi\circ\phi_H^1.
\end{equation}
It will be convenient to view functions $H\in C^\infty(Y_\phi)$ as time dependent Hamiltonians on $\Sigma$ by pulling them back to $[0,1]\times\Sigma$ via the natural projection $[0,1]\times\Sigma\rightarrow Y_\phi$. Given $H\in C^\infty(Y_\phi)$, we may identify the mapping torus $Y_{\phi_H}$ with the graph $\on{gr}(H)$, which we regard as a hypersurface inside the symplectization $M_\phi$. Indeed, let us define the diffeomorphism
\begin{equation}
\label{eq:diffeomorphism_between_symplectizations}
f_H:M_{\phi_H}\rightarrow M_\phi\quad (s,[t,x])\mapsto (s+H(t,\phi_H^t(x)),[t,\phi_H^t(x)]).
\end{equation}
It maps the mapping torus $Y_{\phi_H} \cong \{0\}\times Y_{\phi_H}$ to $\on{gr}(H)$. A direct computation shows that $f_H^*\Omega_\phi = \Omega_{\phi_H}$, i.e.\ $f_H$ is a symplectomorphism between the symplectizations of $Y_{\phi_H}$ and $Y_\phi$. In particular, this implies that $f_H$ maps the characteristic foliation on $Y_{\phi_H}$, which is tangent to $\on{ker}\omega_{\phi_H}$, to the characteristic foliation on $\on{gr}(H)$, which is tangent to $\on{ker}\Omega_\phi|_{\on{gr}(H)}$.\\

Given a Hamiltonian $H\in C^\infty(Y_\phi)$, let us introduce the notation $\on{gr}^\pm(H)$ for the epi- and subgraph of $H$, respectively. In other words, $\on{gr}^\pm(H)$ are the closures of the regions in $M_\phi$ above and below the graph of $H$, respectively. If $H_\pm\in C^\infty(Y_\phi)$ are Hamiltonians satisfying $H_+>H_-$, we let $M_{H_+,H_-}$ denote the closure of the bounded region in $M_\phi$ between the graphs $\on{gr}(H_+)$ and $\on{gr}(H_-)$, i.e.
\begin{equation}
M_{H_+,H_-}\coloneqq \on{gr}^+(H_-)\cap \on{gr}^-(H_+).
\end{equation}
Via the identification between mapping tori and graphs of Hamiltonians, $M_{H_+,H_-}$ can be viewed as a symplectic cobordism between the mapping tori $Y_{\phi_{H_\pm}}$. The symplectomorphism $f_{H_+}$ exhibits $\on{gr}^+(H_+)$ as a positive cylindrical end $[0,\infty)\times Y_{\phi_{H_+}}$ attached to the symplectic cobordism $M_{H_+,H_-}$. Similarly, we regard $\on{gr}^-(H_-)$ as a negative cylindrical end $(-\infty,0]\times Y_{\phi_{H_-}}$ via $f_{H_-}$. This identifies $M_\phi$ with the open symplectic manifold obtained from $M_{H_+,H_-}$ by attaching cylindrical ends along the boundary components $Y_{\phi_{H_\pm}}$, which we will refer to as the {\it symplectic completion} of the cobordism $M_{H_+,H_-}$.\\

Let us fix two Hamiltonians $H_\pm\in C^\infty(Y_\phi)$ and assume that $H_+>H_-$. For every positive integer $d\geq 1$ and every non-negative integer $k\geq 0$, we define a spectral invariant $c_{d,k}^\phi(H_+,H_-)$ based on pseudo-holomorphic curves in the completion of the cobordism $M_{H_+,H_-}$. Our definition is modeled on the definition of $c_k^H$ given in equation \eqref{eq:def_hutchings_elementary}:
\begin{equation}
c_{d,k}^\phi(H_+,H_-) \coloneqq \sup_{J,P} \enspace\inf_u \enspace \ME_{H_+,H_-}(u)
\end{equation}
The supremum is taken over all compatible almost complex structures $J$ on the symplectic completion of $M_{H_+,H_-}$. We require $J$ to be symplectization admissible (see Definition \ref{def:symplectization_admissible_almost_complex_structure}) on the cylindrical ends of the completion outside some compact set (which is allowed to depend on $J$). Moreover, we take the supremum over all sets $P$ of $k$ points in the completion. The points are not restricted to $M_{H_+,H_-}$ and may be contained in the cylindrical ends. The infimum runs over all $J$-holomorphic curves $u$ in the completion which pass through $P$ and all of whose components are non-constant. The domain of $u$ is required to be a compact Riemann surface with finitely many punctures removed. At these punctures, $u$ is assumed to be asymptotic to cylinders of the form $[0,\infty)\times\gamma_+$ or $(-\infty,0]\times\gamma_-$ where $\gamma_\pm$ are periodic orbits of the vector field $\partial_t$ on the mapping tori $Y_{\phi_{H_\pm}}$, respectively. We demand that the {\it degree} of $u$, i.e.\ the intersection number of $u$ with a fiber of the projection $M_\phi\rightarrow\BR\times\BR/\BZ$ is equal to $d$. The symplectic area $\ME_{H_+,H_-}(u)$ is defined by
\begin{equation}
\ME_{H_+,H_-}(u) \coloneqq \int_{u\cap [0,\infty)\times Y_{\phi_{H_+}}} \omega_{\phi_{H_+}} + \int_{u\cap M_{H_+,H_-}} \Omega_\phi + \int_{u\cap (-\infty,0]\times Y_{\phi_{H_-}}} \omega_{\phi_{H_-}}.
\end{equation}
The symplectic cobordism $M_{H_+,H_-}$ is only defined if $H_+>H_-$. However, there is natural way of extending the spectral invariants $c_{d,k}^\phi$ to arbitrary pairs of Hamiltonians (see section \ref{sec:elemantary_construction_of_spectral_invariants}). Thus our spectral invariants take the form of functions
\begin{equation}
c_{d,k}^\phi : C^\infty(Y_\phi)\times C^\infty(Y_\phi) \longrightarrow \BR\cup\{+\infty\}.
\end{equation}
If there is no danger of confusion, we will drop $\phi$ from the notation and simply write $c_{d,k}$. We will see that for fixed $d$ and $k$, the invariant $c_{d,k}$ is either equal to $+\infty$ for all pairs of Hamiltonians or takes values in $\BR$.\\

Before stating the main properties of $c_{d,k}$, we need some more preparation. An {\it orbit set} of the mapping torus $Y_\phi$ is a finite set $\alpha = \{(\alpha_i,m_i)\}$ consisting of tuples $(\alpha_i,m_i)$ where the $\alpha_i$ are distinct simple periodic orbits of the vector field $\partial_t$  on $Y_\phi$ and the $m_i$ are positive integers. We may regard such an orbit set $\alpha$ as a $1$-chain inside $Y_\phi$ and define its {\it degree} to be the intersection number with a fibre of $Y_\phi\rightarrow\BR/\BZ$. Consider arbitrary Hamiltonians $H_\pm\in C^\infty(Y_\phi)$. Let $\alpha_\pm$ be orbit sets of $Y_{\phi_{H_\pm}}$, respectively. Via the identification of $Y_{\phi_{H_\pm}}$ with the graphs $\on{gr}(H_\pm)$, we can regard $\alpha_\pm$ as $1$-cycles in $M_\phi$. If they are homologous, we let $H_2(M_\phi,\alpha_+,\alpha_-)$ denote the set of all $2$-chains $Z$ in $M_\phi$ with integer coefficients satisfying $\partial Z=\alpha_+-\alpha_-$. Two such chains are considered equivalent if their difference is null-homologous in $M_\phi$. The set $H_2(M_\phi,\alpha_+,\alpha_-)$ is naturally an affine space over $H_2(M_\phi;\BZ)\cong H_2(Y_\phi;\BZ)$.

\begin{definition}
\label{def:relative_action_spectrum}
Given a positive integer $d$ and Hamiltonians $H_\pm\in C^\infty(Y_\phi)$, we define the {\it relative action spectrum in degree $d$}, denoted by $\on{Spec}_d(H_+,H_-)$, to be the set of all numbers $\langle Z,\Omega_\phi \rangle$ where $Z \in H_2(M_\phi,\alpha_+,\alpha_-)$ and $\alpha_\pm$ are degree $d$ orbit sets of $Y_{\phi_{H_\pm}}$, respectively.
\end{definition}

The relative action spectrum is always a subset of $\BR$ of measure zero (see Lemma \ref{lem:relative_action_spectrum_measure_zero} in appendix \ref{sec:properties_of_the_relative_action_spectrum}). It need not be closed in general. For example, it can happen that the image of the homomorphism $\langle \cdot, [\omega_\phi] \rangle : H_2(Y_\phi;\BZ)\rightarrow\BR$ is dense. In this case, the relative action spectrum is also dense whenever it is non-empty. Following \cite{EH21}, we make the following definition.

\begin{definition}
\label{def:rational_diffeomorphism}
An area-preserving diffeomorphism $\phi$ of $(\Sigma,\omega)$ is called {\it rational} if the cohomology class $[\omega_\phi]\in H^2(Y_\phi;\BR)$ is a real multiple of a cohomology class in $H^2(Y_\phi;\BQ)$.
\end{definition}

We remark that the property of being rational only depends on the Hamiltonian isotopy class of $\phi$ and that the set of rational area-preserving diffeomorphism is $C^\infty$ dense inside the space of all area-preserving diffeomorphisms. If $\phi$ is rational, then the relative action spectrum must be closed (see Lemma \ref{lem:relative_action_spectrum_closed} in appendix \ref{sec:properties_of_the_relative_action_spectrum}).

\begin{theorem}
\label{theorem:spectral_invariants_properties_basic}
The spectral invariants $c_{d,k}$ satisfy the following properties. Let $H_\pm\in C^\infty(Y_\phi)$ denote Hamiltonians and fix integers $d\geq 1$ and $k\geq 0$.
\begin{enumerate}
\item {\bf (Shift)} Let $C_\pm\in\BR$. Then 
\begin{equation*}
c_{d,k}(H_++C_+,H_-+C_-) = c_{d,k}(H_+,H_-) + d(C_+-C_-).
\end{equation*}
\item {\bf (Increasing)} If $k'\geq k$, then $c_{d,k'}(H_+,H_-)\geq c_{d,k}(H_+,H_-)$.
\item {\bf (Sublinearity)} Suppose that $d=d_1+d_2$ and $k=k_1+k_2$. Then
\begin{equation*}
c_{d,k}(H_+,H_-) \leq c_{d_1,k_1}(H_+,H_-) + c_{d_2,k_2}(H_+,H_-).
\end{equation*}
\item {\bf (Non-negativity)} If $H_+\geq H_-$, then $c_{d,k}(H_+,H_-)\geq 0$.
\item {\bf (Spectrality)} If $\phi$ is rational and $c_{d,k}$ is finite, then $c_{d,k}(H_+,H_-) \in \on{Spec}_d(H_+,H_-)$.
\item {\bf (Continuity)} Endow $C^\infty(Y_\phi)$ with the $C^0$ norm. Then $c_{d,k}$ is Lipschitz continuous with Lipschitz constant $d$ in both entries.
\item {\bf (Monotonicity)} Let $H_0\in C^\infty(Y_\phi)$ be any Hamiltonian. Suppose that $k=k_++k_-$ for non-negative integers $k_\pm$. Then
\begin{equation*}
c_{d,k}(H_+,H_-) \geq c_{d,k_+}(H_+,H_0) + c_{d,k_-}(H_0,H_-).
\end{equation*}
\item {\bf (Packing)} Suppose that $H_+ > H_-$ and let $X$ denote a compact Liouville domain which admits a symplectic embedding into the symplectic cobordism $M_{H_+,H_-}$ between the graphs $\on{gr}(H_\pm)$. Then
\begin{equation*}
c_{d,k}(H_+,H_-)\geq c_k^H(X).
\end{equation*}
\item {\bf (Closed curve upper bound)} Let $(X,\Omega)$ be a closed symplectic $4$-manifold. Assume that $H_+>H_-$ and that there exists a symplectic embedding $\iota:M_{H_+,H_-} \rightarrow X$. Let $Z\in H_2(X;\BZ)$ be a homology class and assume that the Gromov-Taubes invariant $\on{Gr}(X,\Omega;Z)$ does not vanish (see appendix \ref{subsec:review_taubes_gromov_invariants} for a review of $\on{Gr}$). Suppose that the ECH index $I(Z)$ is equal to $2k$ and that the intersection number $Z\cdot \iota_*[\Sigma]$ is equal to $d$. Then
\begin{equation*}
c_{d,k}(H_+,H_-)\leq \langle Z,[\Omega] \rangle.
\end{equation*}
\end{enumerate}
\end{theorem}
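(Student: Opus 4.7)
My plan is to address the nine properties in a dependency order, starting with those that follow formally from the definition and moving toward those that genuinely require pseudo-holomorphic curve analysis. For Shift, I would first use translation invariance of $(M_\phi,\Omega_\phi)$ in the $s$-direction to reduce to the case $C_-=0$, and then observe that increasing $H_+$ by $C_+$ enlarges the middle cobordism by a slab of height $C_+$ above $\on{gr}(H_+)$; a degree-$d$ curve contributes exactly $d\cdot C_+$ of additional $ds\wedge dt$ area across this slab. Increasing is immediate: enlarging $P$ shrinks the admissible family of curves, so the infimum cannot decrease. Sublinearity follows by decomposing $P=P_1\sqcup P_2$ with $|P_i|=k_i$ and forming disjoint unions of near-optimal $J$-curves of degrees $d_i$ through $P_i$. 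Non-negativity uses that $u^*\Omega_\phi$ and $u^*\omega_{\phi_{H_\pm}}$ are pointwise non-negative on any $J$-holomorphic curve for $\Omega_\phi$-compatible $J$.

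The core technical step is Monotonicity, which I would prove by a neck-stretching argument along $\on{gr}(H_0)$. Given $(J_\pm,P_\pm)$ with $|P_\pm|=k_\pm$ that come $\epsilon$-close to realizing $c_{d,k_+}(H_+,H_0)$ and $c_{d,k_-}(H_0,H_-)$, I would glue them into a family $J_R$ on the completion of $M_{H_+,H_-}$ with a neck of length $R$ inserted along $Y_{\phi_{H_0}}$, and set $P=P_+\sqcup P_-$. By definition of $c_{d,k}(H_+,H_-)$, for each $R$ there exist $J_R$-holomorphic degree-$d$ curves through $P$ with area approaching $\inf_u\ME(u)\le c_{d,k}(H_+,H_-)$; letting $R\to\infty$ and invoking SFT compactness produces a broken limit whose top and bottom pieces are $J_+$- and $J_-$-holomorphic degree-$d$ curves through $P_+$ and $P_-$ (whose areas are bounded below by $c_{d,k_\pm}-\epsilon$), with additional non-negative contributions from symplectization levels over $Y_{\phi_{H_0}}$. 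Summing level-wise areas and sending $\epsilon\to 0$ yields the inequality. Continuity is then a corollary: Monotonicity with $k_+=0$ combined with Non-negativity gives monotonicity of $c_{d,k}$ separately in each entry, and combining this with Shift sandwiches $c_{d,k}(H_+',H_-)$ between $c_{d,k}(H_+,H_-)\pm d\epsilon$ whenever $\|H_+'-H_+\|_{C^0}\le\epsilon$.

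Spectrality is an infimum-attainment argument. For fixed $(J,P)$ with finite $\inf_u\ME(u)$, SFT compactness produces a minimizing $J$-curve $u^*$ whose area equals $\langle[u^*],[\Omega_\phi]\rangle\in\on{Spec}_d(H_+,H_-)$; thus $c_{d,k}(H_+,H_-)$ is a supremum of elements of $\on{Spec}_d$, and rationality of $\phi$ (combined with Lemma \ref{lem:relative_action_spectrum_closed}) forces $\on{Spec}_d$ to be closed, so that the supremum lies in $\on{Spec}_d$ whenever it is finite. Packing applies the same neck-stretching template to a neck inserted along $\partial X$: a near-optimal configuration $(J',P')$ for $c_k^H(X)$ on the completion of $X$ extends to a $J$ on the ambient completion with $P=P'$, and the SFT limit has a piece in the completion of $X$ through $P'$ of area $\ge c_k^H(X)$ together with non-negative outer contributions, whence $\inf_u\ME(u)\ge c_k^H(X)$. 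The Closed curve upper bound uses Gromov-Taubes non-vanishing: for any $(J,P)$ in the completion of $M_{H_+,H_-}$ extended to $(X,\Omega)$ and made symplectization-admissible near $\on{gr}(H_\pm)$, non-vanishing of $\on{Gr}(X,\Omega;Z)$ produces an embedded $J$-curve of class $Z$ through $P$ with area $\langle Z,[\Omega]\rangle$; neck-stretching along $\on{gr}(H_\pm)$ extracts a middle piece in the completion of $M_{H_+,H_-}$ of degree $Z\cdot\iota_*[\Sigma]=d$ passing through $P$ with area $\le\langle Z,[\Omega]\rangle$, giving the desired upper bound.

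The main obstacle is the neck-stretching and SFT compactness machinery invoked in Monotonicity, Packing, and the Closed curve upper bound. Carefully distributing the degree, asymptotic orbit-set, and marked-point incidence conditions among the levels of a broken limit, and in particular ruling out that symplectic area escapes into ghost components of zero degree in the symplectization levels, is the most delicate analytic ingredient; it will likely require genericity arguments, controlled index inequalities, and continuity in the almost complex structure to handle non-generic configurations.
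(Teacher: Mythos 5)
Your overall plan tracks the paper's proof reasonably well on the formal properties (Shift, Increasing, Sublinearity, Continuity), but there is one central gap and one secondary imprecision that you should address.

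The central gap is the compactness tool used in Monotonicity, Packing, and the Closed curve upper bound. You propose to extract broken limits from the neck-stretching family via SFT compactness. But SFT compactness requires an a priori bound on the genus of the curves being stretched, and no such bound is available here: the moduli spaces $\MM^J(H_+,H_-,d,P)$ range over curves of arbitrary topological type. The paper (following \cite{Hut22}) addresses this explicitly by replacing SFT compactness with Taubes' local version of Gromov compactness (\cite[Proposition 3.3]{Tau98}), which produces a limiting current without any genus hypothesis; a uniform genus bound is then recovered \emph{a posteriori} by passing to the underlying simple curves, using that a non-degenerate mapping torus has only finitely many orbit sets of degree $\leq d$, fixing the relative homology class of the curves, and invoking the relative adjunction formula and asymptotic writhe bound (\cite[Proposition 4.9 and Lemma 4.20]{Hut09}) to bound the Euler characteristic. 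This detour is not an optional refinement --- it is what makes the neck-stretching step valid. Your last paragraph flags ghost components and index inequalities as the delicate analytic point, but the genus bound is the actual bottleneck, and your proposal currently leaves it unaddressed.

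A second, smaller issue concerns Non-negativity. You assert that $u^*\omega_{\phi_{H_\pm}}$ is pointwise non-negative "for $\Omega_\phi$-compatible $J$." Compatibility with $\Omega_\phi$ only controls the sign of $u^*\Omega_\phi$; to get non-negativity of $u^*\omega_{\phi_{H_\pm}}$ you need $J$ to be symplectization admissible (after pulling back by $f_{H_\pm}$) on the cylindrical end in question, and elements of $\MJ(H_+,H_-)$ are only required to be adapted outside a compact set. In fact, the energy $\ME_{H_+,H_-}(u)$ can be negative for a general admissible $J$ (cf.\ the remark after Lemma \ref{lem:shift_spectrum_energy}). The correct argument, as in Lemma \ref{lem:non_negativity}, is that since $c_{d,k}$ is a supremum over $(J,P)$, it suffices to exhibit one $(J,P)$ --- with $J$ adapted on the entirety of both ends --- for which every curve has non-negative energy. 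Separately, you should note that $c_{d,k}$ is first defined only for non-degenerate $H_\pm$ and extended to all Hamiltonians by Lipschitz continuity, which requires an extra limiting argument in Spectrality; and the minimizer via "SFT compactness" in Spectrality is unnecessary (as well as suffering from the same genus issue): the infimum of a subset of the closed set $\on{Spec}_d(H_+,H_-)$ already lies in it, without being attained.
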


\begin{remark}
If $c_{d,k}$ is constant equal to $+\infty$, then the properties in Theorem \ref{theorem:spectral_invariants_properties_basic} (except for the closed curve upper bound) are trivially true. We point out that at this stage it is not clear that there exist area-preserving diffeomorphisms $\phi$ such that $c_{d,k}$ takes values in $\BR$. We will see in subsection \ref{subsec:relationship_with_PFH} below that for all rational $\phi$ there exist arbitrarily large $d$ such that the invariant $c_{d,k}$ is indeed finite for all $k$.
\end{remark}

\begin{remark}
The main content of Theorem \ref{theorem:spectral_invariants_properties_basic} is monotonicity, the packing property and the closed curve upper bound. The remaining properties (except for continuity) follow in a straightforward manner from our construction of $c_{d,k}$. Continuity is an easy consequence of the shift property, non-negativity and monotonicity. The proofs of monotonicity, the packing property and the closed curve upper bound are based on a neck stretching argument similar to the one explained in \cite{Hut22}. The main ingredients are a local version of Gromov compactness due to Taubes (see \cite[Proposition 3.3]{Tau98}) and a relative adjunction formula and asymptotic writhe bound for pseudo-holomorphic curves in symplectic $4$-manifolds with cylindrical ends due to Hutchings (see \cite[Proposition 4.9 and Lemma 4.20]{Hut09}).
\end{remark}

\subsection{Weyl law}

Inspired by the Weyl law for PFH spectral invariants proved independently by Cristofaro-Gardiner-Prasad-Zhang in \cite{CPZ21} and Edtmair-Hutchings in \cite{EH21}, we define a second family of spectral invariants. For every positive integer $d\geq g$, we define
\begin{equation*}
c_d^\phi:C^\infty(Y_\phi)\times C^\infty(Y_\phi)\longrightarrow\BR\cup\{+\infty\}
\end{equation*}
by
\begin{equation}
\label{eq:definition_spectral_invariant_weyl}
c_d^\phi(H_+,H_-) \coloneqq \sup_{k\in\BZ_{\geq 0}} \left( c_{d,k(d-g+1)}^\phi(H_+,H_-) - k A \right).
\end{equation}
Recall that $g$ and $A$ denote the genus and area of $(\Sigma,\omega)$, respectively. We refer to section \ref{subsec:relationship_with_PFH} for some motivation for this definition. We will abbreviate $c_d=c_d^\phi$ if this does not cause ambiguity. Similarly to the invariants $c_{d,k}$, for every fixed $d$, the invariant $c_d$ is either equal to $+\infty$ on all pairs of Hamiltonians or takes values in $\BR$. Moreover, finiteness of $c_d$ implies that $c_{d,k}$ is finite for all $k\geq 0$.

\begin{theorem}
\label{theorem:spectral_invariants_properties_weyl}
The spectral invariants $c_d$ satisfy the following properties. Let $H_\pm\in C^\infty(Y_\phi)$ denote Hamiltonians and fix a positive integer $d\geq g$.
\begin{enumerate}
\item {\bf (Shift)} Let $C_\pm\in\BR$. Then 
\begin{equation*}
c_d(H_++C_+,H_-+C_-) = c_d(H_+,H_-) + d(C_+-C_-).
\end{equation*}
\item {\bf (Non-negativity)} If $H_+\geq H_-$, then $c_d(H_+,H_-)\geq 0$.
\item {\bf (Vanishing)} If $c_d$ is finite, then $c_d(H,H)=0$ for all $H\in C^\infty(Y_\phi)$.
\item {\bf (Monotonicity)} Let $H_0\in C^\infty(Y_\phi)$ be any Hamiltonian. Then
\begin{equation*}
c_d(H_+,H_-) \geq c_d(H_+,H_0) + c_d(H_0,H_-).
\end{equation*}
\item {\bf (Continuity)} Endow $C^\infty(Y_\phi)$ with the $C^0$ norm. Then $c_d$ is Lipschitz continuous with Lipschitz constant $d$ in both entries.
\item {\bf (Spectrality)} If $\phi$ is rational and is $c_d$ finite, then $c_d(H_+,H_-) \in \on{Spec}_d(H_+,H_-)$.
\item {\bf (Weyl law)} Let $(d_i)_{i>0}$ be a sequence of positive integers such that $\lim_{i\rightarrow\infty} d_i = +\infty$. Suppose that for every $i$ the spectral invariant $c_{d_i}$ is finite. Then
\begin{equation*}
\lim_{i\rightarrow\infty} \frac{c_{d_i}(H_+,H_-)}{d_i} = A^{-1} \int_{Y_\phi} (H_+-H_-) dt\wedge\omega_\phi.
\end{equation*}
\end{enumerate}
\end{theorem}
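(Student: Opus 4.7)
Properties (1)--(6) are largely formal consequences of Theorem \ref{theorem:spectral_invariants_properties_basic}, obtained by pushing the corresponding properties of $c_{d,k}$ through the supremum in the definition of $c_d$. The shift property of $c_{d,k}$ shifts every term $c_{d,k(d-g+1)}(H_++C_+,H_-+C_-) - kA$ by exactly $d(C_+-C_-)$, and this passes to the supremum. Non-negativity follows from taking the $k=0$ term. Monotonicity combines sublinearity and monotonicity of $c_{d,k}$: decomposing $k = k_++k_-$ and matching the $kA$ shifts yields the triangle inequality. Continuity is immediate because $c_d$ is a supremum of $d$-Lipschitz functions, hence $d$-Lipschitz in each entry. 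Vanishing then follows from non-negativity and monotonicity: with $H_+=H_-=H_0=H$ the latter forces $c_d(H,H)\leq 0$ when $c_d$ is finite, and non-negativity gives the reverse. For spectrality the essential observation is that $\on{Spec}_d(H_+,H_-)$ is invariant under translation by $A$, since replacing a relative $2$-chain $Z \in H_2(M_\phi,\alpha_+,\alpha_-)$ by $Z+[\Sigma]$, where $[\Sigma]$ is a fibre class, changes $\langle Z,\Omega_\phi\rangle$ by exactly $A$. Hence each value $c_{d,k(d-g+1)}(H_+,H_-) - kA$ lies in $\on{Spec}_d(H_+,H_-)$, and the supremum lies in its closure; for rational $\phi$ this closure equals the spectrum itself by Lemma \ref{lem:relative_action_spectrum_closed}.

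The Weyl law is the substantive content and the main obstacle. My plan is to establish matching asymptotic lower and upper bounds for $c_d(H_+,H_-)/d$ as $d\to\infty$. In both directions I would first use shift and continuity to reduce to the case where $H_+-H_-$ is smooth and strictly positive, so that $M_{H_+,H_-}$ is a genuine symplectic cobordism and both the packing property and the closed curve upper bound apply directly. For the lower bound I would then pack $M_{H_+,H_-}$ by a disjoint union of small scaled polydisks whose total symplectic volume approaches $\int_{Y_\phi}(H_+-H_-)\,dt\wedge\omega_\phi$, and combine the packing property of Theorem \ref{theorem:spectral_invariants_properties_basic} with Hutchings' explicit computation of $c_k^H$ on polydisks from \cite{Hut22}. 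Matching the index $k(d-g+1)$ to $d$ asymptotically yields the desired lower bound.

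For the upper bound I would cap off $M_{H_+,H_-}$ to produce a closed symplectic $4$-manifold $(X,\Omega)$ containing $M_{H_+,H_-}$, and apply the closed curve upper bound to classes $Z \in H_2(X;\BZ)$ of degree $d$ and ECH index $2k(d-g+1)$, then optimize the choice of $k$. The hard part, both here and in the theorem overall, is exhibiting such classes $Z$ with non-vanishing Gromov-Taubes invariant and controlling $\langle Z,[\Omega]\rangle$ asymptotically. This is the non-trivial input that replaces PFH in the original Weyl law proofs of \cite{CPZ21,EH21}: for $\Sigma$ of genus zero it can be handled by direct enumerative counts of holomorphic curves in rational ruled surfaces, while for higher genus one must invoke Taubes' $\on{SW}=\on{Gr}$ theorem on an appropriate closed symplectic $4$-manifold, as indicated in the introduction.
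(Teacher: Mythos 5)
Your treatment of properties (1)--(6) is essentially correct. Minor remarks: for continuity you use the elementary fact that a (finite) supremum of $d$-Lipschitz functions is $d$-Lipschitz, whereas the paper re-derives it from shift, non-negativity and monotonicity; and for monotonicity of $c_d$ only the monotonicity of $c_{d,k}$ is needed, not sublinearity. Your spectrality argument, while not spelled out in the paper, is exactly the right mechanism: the fibre class $[\Sigma]$ satisfies $\langle[\Sigma],\Omega_\phi\rangle = A$, so $\on{Spec}_d(H_+,H_-)$ is invariant under translation by $A$, each term $c_{d,k(d-g+1)}(H_+,H_-)-kA$ lies in the spectrum, and the supremum lies in its closure, which equals the spectrum for rational $\phi$ by Lemma \ref{lem:relative_action_spectrum_closed}.

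The genuine gap is in the Weyl law, specifically the upper bound. Your lower bound strategy (pack $M_{H_+,H_-}$ by disjoint balls/polydisks of volume $\approx \int(H_+-H_-)dt\wedge\omega_\phi$, use the packing property together with the asymptotics of $c_k^H$ from \cite{Hut22}, and choose $k_i$ to scale like $Vd_i/A^2$) is the paper's argument. But for the upper bound the paper does \emph{not} invoke the closed curve upper bound at all: it observes that, combining monotonicity with vanishing,
\begin{equation*}
0 = c_{d_i}(H_+,H_+) \geq c_{d_i}(H_+,H_-) + c_{d_i}(H_-,H_+),
\end{equation*}
hence $\limsup_i c_{d_i}(H_+,H_-)/d_i \leq \limsup_i\bigl(-c_{d_i}(H_-,H_+)/d_i\bigr)$, and then applies the already-established lower bound with the roles of $H_\pm$ interchanged. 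The upper bound thus comes for free from the lower bound. Your proposed route (cap off $M_{H_+,H_-}$, find homology classes with non-vanishing Gromov--Taubes invariant of the correct degree and ECH index, control $\langle Z,[\Omega]\rangle$) is not only much heavier --- it is also incomplete as sketched: since $c_d$ is a supremum over all $k$, bounding it from above requires bounding \emph{every} term $c_{d,k(d-g+1)}(H_+,H_-)-kA$ uniformly, which means exhibiting for every $k$ a suitable class $Z_k$ of degree $d$ with ECH index at least $2k(d-g+1)$ and controlling $\langle Z_k,[\Omega]\rangle - kA$ uniformly in $k$. This may be salvageable with $X=\hat\Sigma\times S^2$ as in the finiteness argument, but you flagged it yourself as ``the hard part,'' and it is entirely unnecessary: the symmetry-plus-vanishing trick above sidesteps it.
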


\begin{remark}
\label{remark:weyl_law_elementary}
The shift property, non-negativity, vanishing, monotonicity, continuity and spectrality are easy consequences of the corresponding properties of the spectral invariants $c_{d,k}$. The Weyl law requires one additional ingredient, namely the asymptotics of Hutchings' elementary capacities $c_k^H$ for the ball. The proof of the asymptotic formula for $c_k^H$ given in \cite{Hut22} is elementary in the sense that it does not require ingredients beyond Gromov's paper \cite{Gr85}, Taubes local version of Gromov compactness \cite{Tau98}, the relative relative adjunction formula and the writhe bound \cite{Hut09}. We emphasize that as soon as finiteness of the spectral invariants $c_d$ is established, the Weyl law for $c_d$ is elementary in the same sense.
\end{remark}

\subsection{Finiteness}
\label{subsec:finiteness}

In order for the spectral invariants $c_{d,k}$ and $c_d$ to be useful, we need results that guarantee their finiteness. It is possible to bound $c_{d,k}$ and $c_d$ from above using PFH spectral invariants (see section \ref{subsec:relationship_with_PFH} below). However, in some sense this defeats the whole purpose of this paper, which is reproving applications of PFH spectral invariants by more elementary methods. Thus we need alternative ways to show finiteness of $c_{d,k}$ and $c_d$. Our main strategy is to use the closed curve upper bound in Theorem \ref{theorem:spectral_invariants_properties_basic}. This requires proving the existence of pseudo-holomorphic curves in closed symplectic $4$-manifolds. Our main source of pseudo-holomorphic curves is the following non-vanishing theorem for Gromov-Taubes invariants (see appendix \ref{subsec:review_taubes_gromov_invariants} for a brief review).

\begin{theorem}
\label{theorem:non_vanishing_taubes_gromov}
Let $\sigma$ be an area form on $S^2$. We endow the product $X\coloneqq \Sigma\times S^2$ with the product symplectic form $\Omega\coloneqq \omega\oplus\sigma$. Let $(c,d)\in\BZ^2$ and define the homology class
\begin{equation}
\label{eq:non_vanishing_taubes_gromov_definition_homology_class}
Z\coloneqq c \cdot [\Sigma\times *] + d \cdot [*\times S^2] \in H_2(X;\BZ).
\end{equation}
Then the Gromov-Taubes invariant $\on{Gr}(X,\Omega;Z)$ is non-vanishing if and only if $c\geq 0$ and $d\geq (g-1)\frac{c}{c+1}$.
\end{theorem}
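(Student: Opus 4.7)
I would split the proof into necessity and sufficiency directions.

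\textbf{Necessity.} Assume $\on{Gr}(X,\Omega;Z)\neq 0$. Non-vanishing requires that the expected complex dimension of the moduli space,
\[
d(Z)=\tfrac{1}{2}\bigl(Z\cdot Z+c_1(TX)\cdot Z\bigr),
\]
be non-negative. Using $[\Sigma\times\ast]^2=[\ast\times S^2]^2=0$, $[\Sigma\times\ast]\cdot[\ast\times S^2]=1$, $c_1(TX)\cdot[\Sigma\times\ast]=\chi(\Sigma)=2-2g$, and $c_1(TX)\cdot[\ast\times S^2]=\chi(S^2)=2$, I find $d(Z)=cd+(1-g)c+d$, and the inequality $d(Z)\geq 0$ rearranges (once $c+1>0$ is known) to $d\geq(g-1)c/(c+1)$. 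The constraint $c\geq 0$ itself is enforced by positivity of intersection: choosing an almost complex structure making each fiber $\{p\}\times S^2$ $J$-holomorphic (for instance a split one), a $J$-holomorphic representative of $Z$ has non-negative intersection with a generic fiber, so $c=Z\cdot[\ast\times S^2]\geq 0$.

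\textbf{Sufficiency.} Assume $c\geq 0$ and $d\geq(g-1)c/(c+1)$. I would invoke Taubes' theorem $\operatorname{SW}=\on{Gr}$ to replace the problem with a Seiberg--Witten computation on the ruled surface $X=\Sigma\times S^2$ in the $\operatorname{Spin}^c$ structure $\mathfrak{s}_Z$ determined by $Z$. Equipping $X$ with the product Kähler form, Witten's (or Brussee's) analysis of SW on Kähler surfaces realizes this invariant in the Taubes (symplectic) chamber as a signed count of effective holomorphic divisors in class $Z$, equivalently of sections of the line bundle $L(Z)=p_1^{\ast}L_d\otimes p_2^{\ast}\MO_{S^2}(c)$ for a generic $L_d\in\operatorname{Pic}^d(\Sigma)$. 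Since $H^0(X,L(Z))=H^0(\Sigma,L_d)\otimes H^0(S^2,\MO_{S^2}(c))$ and $h^0(S^2,\MO_{S^2}(c))=c+1>0$, combining Riemann--Roch on $\Sigma$ with the wall-crossing formula for $b^+=1$ SW invariants yields non-vanishing throughout the claimed range. Alternatively, one may cite Li--Liu's explicit computation of Gromov--Taubes invariants of ruled surfaces. When $g=0$ the whole argument can be done elementarily via classical algebraic geometry of divisors on $S^2\times S^2$, avoiding SW entirely, which is consistent with the paper's claim that the $g=0$ case bypasses Seiberg--Witten theory.

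The main obstacle is the sufficiency direction, particularly at the boundary $d=\lceil(g-1)c/(c+1)\rceil$ where the expected dimension drops to zero. When $d\geq 2g-1$ the bundle $L(Z)$ has abundant sections, smooth divisors exist in many chambers, and non-vanishing is essentially automatic; in the sharp range the absence of holomorphic divisors for the split complex structure (for instance when $d$ is below the gonality of $\Sigma$, so that no meromorphic functions $\Sigma\to\BP^1$ of degree $d$ exist) forces recourse to either the $b^+=1$ wall-crossing formula or a careful deformation of the Kähler structure.
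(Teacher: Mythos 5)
Your necessity argument is correct and agrees with how the paper implicitly treats that direction: $\on{Gr}$ vanishes by definition when $I(Z)<0$, and $c\geq 0$ follows from positivity of intersections with a $J$-holomorphic sphere fiber (using Lemma \ref{lem:gromov_taubes_nonvanishing_implies_curves} to produce a $J$-curve for the split $J$). Your index computation matches the paper's.

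The sufficiency direction heads toward the same machinery the paper uses --- Taubes' $\on{SW}=\on{Gr}$ theorem in the form \eqref{eq:taubes_gromov_equals_seiberg_witten_bplus_1} together with the $b^+=1$ wall-crossing formula of Li--Liu/Ohta--Ono --- but there is a genuine gap in how you close it. The wall-crossing formula only produces the \emph{difference}
\[
\on{SW}^+(X;\mathfrak{s}_\Omega+\on{PD}(Z)) - \on{SW}^-(X;\mathfrak{s}_\Omega+\on{PD}(Z)) = (1+c)^g,
\]
and knowing the difference is nonzero does not by itself tell you that $\on{SW}^+$ (which is what equals $\on{Gr}$) is nonzero. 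Your proposed fallback, counting effective divisors in the class of $L(Z)=p_1^*L_d\otimes p_2^*\MO_{S^2}(c)$ via the K\"ahler picture, fails exactly in the sharp range: for a generic $L_d$ with $d<g$ one has $h^0(\Sigma,L_d)=0$, hence $H^0(X,L(Z))=0$, so the K\"ahler-chamber count is zero, and you have no direct handle on $\on{SW}^+$. You acknowledge this difficulty but do not resolve it, and the vague appeal to ``Riemann--Roch plus wall-crossing'' does not constitute a proof. The ingredient you are missing, and which the paper supplies, is that $\Sigma\times S^2$ admits a metric of positive scalar curvature: this forces at least one of $\on{SW}^\pm$ to vanish in every $\on{Spin^c}$ structure. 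A chamber analysis (as in Salamon's exposition, cited in the paper) then shows that when $\langle [*\times S^2],\on{PD}(Z)\rangle=c\geq 0$ it is $\on{SW}^-$ that vanishes, so the wall-crossing formula pins down $\on{SW}^+=(1+c)^g\neq 0$ outright; when $c\leq -1$ it is $\on{SW}^+$ that vanishes, recovering the ``only if'' direction as well. Without the PSC step, your sufficiency argument does not go through.

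Your remark about the $g=0$ case being doable elementarily via classical algebraic geometry on $S^2\times S^2$ is accurate; the paper carries exactly that out in Theorem \ref{theorem:taubes_gromov_sphere_cross_sphere} (and similarly for $\BC P^2$ in Theorem \ref{theorem:taubes_gromov_projective_space}) using automatic transversality and Bertini, bypassing Seiberg--Witten entirely.
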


\begin{remark}
We provide a proof of Theorem \ref{theorem:non_vanishing_taubes_gromov} in appendix \ref{sec:computation_of_taubes_gromov_invariants}. In the case $\Sigma=S^2$, our proof is elementary and in particular does not use Seiberg-Witten theory. In fact, the existence of the relevant pseudo-holomorphic curves in $S^2\times S^2$ was essentially already proved by Gromov \cite[Section 0.2.A.]{Gr85}, although he does not explicitly mention point constraints. The upshot is that all results we prove in the genus zero case are particularly minimalistic in terms of ingredients going into their proofs. For general $\Sigma$, our argument is based on Taubes' ``Seiberg-Witten = Gromov" theorem (see \cite{Tau00}). One might hope to be able to avoid this machinery.
\end{remark}

The following result is a straightforward consequence of Theorem \ref{theorem:non_vanishing_taubes_gromov} and the closed curve upper bound in Theorem \ref{theorem:spectral_invariants_properties_basic}. See Theorem \ref{theorem:finiteness_for_hamiltonian_diffeomorphisms} in subsection \ref{subsec:proofs_of_basic_properties} for a proof.

\begin{theorem}
Suppose that $\phi$ is a Hamiltonian diffeomorphism of $(\Sigma,\omega)$. Then the invariants $c_{d,k}^\phi$ and $c_d^\phi$ are finite for all positive integers $d\geq g$ and non-negative integers $k$.
\end{theorem}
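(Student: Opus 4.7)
The plan is to reduce to the closed curve upper bound from Theorem \ref{theorem:spectral_invariants_properties_basic}, applied to a symplectic embedding of the cobordism $M_{H_+,H_-}$ into a product $X = \Sigma \times S^2$, using Theorem \ref{theorem:non_vanishing_taubes_gromov} as the source of non-vanishing Gromov-Taubes invariants. Since $\phi$ is Hamiltonian, any Hamiltonian isotopy from the identity to $\phi$ yields a symplectomorphism $M_\phi \cong \BR \times (\BR/\BZ) \times \Sigma$ with symplectic form $ds \wedge dt + \omega$. By the Shift property it suffices to consider $H_+ > H_-$, in which case $M_{H_+,H_-}$ sits in a slab $[-N,N] \times (\BR/\BZ) \times \Sigma$ for some $N > 0$; for any $A' > 2N$ this slab admits a symplectic embedding $\iota$ into $X \coloneqq \Sigma \times S^2$ endowed with the product form $\Omega \coloneqq \omega \oplus \sigma$, where $\sigma$ is an area form on $S^2$ of total area $A'$.

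The next step is to verify the hypotheses of the closed curve upper bound for the classes
\[ Z_c \coloneqq c \cdot [\Sigma \times *] + d \cdot [* \times S^2] \in H_2(X;\BZ), \qquad c \in \BZ_{\geq 0}. \]
The hypothesis $d \geq g$ forces $d \geq (g-1)c/(c+1)$ for every $c \geq 0$, so Theorem \ref{theorem:non_vanishing_taubes_gromov} yields $\on{Gr}(X,\Omega;Z_c) \neq 0$. A short intersection calculation, using $c_1(TX)\cdot [\Sigma \times *] = 2-2g$ and $c_1(TX)\cdot [* \times S^2] = 2$, gives $I(Z_c) = c_1(TX)\cdot Z_c + Z_c \cdot Z_c = 2\bigl(d + c(d-g+1)\bigr)$ and $Z_c \cdot \iota_*[\Sigma] = d$. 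Setting $k_c \coloneqq d + c(d-g+1)$, the closed curve upper bound then produces
\[ c_{d,k_c}^\phi(H_+,H_-) \leq \langle Z_c, [\Omega] \rangle = cA + dA'. \]

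To finish, for arbitrary $k \geq 0$ I would choose $c$ with $k_c \geq k$ and invoke the Increasing property to get $c_{d,k}^\phi(H_+,H_-) \leq c_{d,k_c}^\phi(H_+,H_-) < \infty$. For $c_d^\phi$, specializing to $c = k_0$ gives $c_{d,k_0(d-g+1)}^\phi(H_+,H_-) \leq c_{d,d+k_0(d-g+1)}^\phi(H_+,H_-) \leq k_0 A + dA'$ by Increasing and the bound above, so the supremum in \eqref{eq:definition_spectral_invariant_weyl} is bounded by $dA'$. The main obstacle, more of a bookkeeping issue than a conceptual one, is the combinatorial fit between the hypothesis $d \geq g$ and the non-vanishing constraint $d \geq (g-1)c/(c+1)$ of Theorem \ref{theorem:non_vanishing_taubes_gromov}: the former is exactly what permits arbitrarily large $c$, so that Increasing bridges the arithmetic progression $\{k_c\}_{c \geq 0}$ to every non-negative integer $k$, and simultaneously delivers uniform control of $c_{d,k_0(d-g+1)}^\phi - k_0 A$ that is needed to bound $c_d^\phi$.
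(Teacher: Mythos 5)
Your proposal is correct and follows essentially the same route as the paper: identify $M_\phi$ with $\BR \times (\BR/\BZ) \times \Sigma$ via a Hamiltonian isotopy to the identity, embed a slab containing the cobordism into $\Sigma \times S^2$ with a large sphere factor, invoke Theorem \ref{theorem:non_vanishing_taubes_gromov} for the classes $c[\Sigma \times *] + d[* \times S^2]$, and feed the resulting upper bounds $c_{d,k_c}(H_+,H_-) \leq cA + dA'$ into the Increasing property to cover all $k$ and bound $c_d$. The only cosmetic difference is that the paper reduces to $H_\pm = 0$ and then appeals to Corollary \ref{cor:finite_or_infinite}, whereas you embed the cobordism $M_{H_+,H_-}$ for general $H_+ > H_-$ directly; both are sound.
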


\begin{remark}
Using PFH one can show that a similar finiteness result continues to hold for all rational $\phi$ (see Corollary \ref{cor:finiteness_rational_phi}). It would be interesting to understand to what extent this can also be proved by more elementary methods.
\end{remark}

\subsection{Relationship with PFH spectral numbers}
\label{subsec:relationship_with_PFH}

Similarly to the upper bound for Hutchings' alternative capacities in terms of ECH capacities (see \cite{Hut22}), it is possible to bound $c_{d,k}$ and $c_d$ from above using PFH spectral invariants. We begin with a brief overview of PFH and refer to \cite{EH21} for more details. Given an area-preserving diffeomorphism $\phi$ of $(\Sigma,\omega)$, one can define a module $HP(\phi,\gamma,G)$, called the {\it periodic Floer homology} of $\phi$. In addition to $\phi$, it depends on the choice of a $1$-cycle $\gamma\subset Y_\phi$, called {\it reference cycle}. Moreover, it depends on the choice of a subgroup $G\subset\operatorname{ker}([\omega_\phi])$ of the kernel of the homomorphism $\langle  \cdot,[ \omega_\phi ]\rangle: H_2(Y_\phi;\BZ)\rightarrow\BR$. Here we simply take $G=\ker([\omega_\phi])$ and omit it from the notation. Periodic Floer homology is a module over a Novikov ring $\Lambda$ which is a suitable completion of the group ring $\BF_2[H_2(Y_\phi,\BZ)/\on{ker}([\omega_\phi])]$. It is equipped with an endomorphism $U$ called the $U$-map. Moreover, periodic Floer homology naturally carries the more refined structure of a persistence module $(HP^L(\phi,\gamma))_{L\in\BR}$ arising from a chain level action filtration. Using this structure, one can define a {\it spectral invariant} $c_\sigma(\phi,\gamma)\in\BR$ associated to any non-zero class $\sigma\in HP(\phi,\gamma)$. In order to explain the relationship of these spectral invariants with periodic orbits of $\phi$, let us make the following definitions. An {\it anchored orbit set} is a tuple $(\alpha,Z)$ consisting of an orbit set $\alpha$ of $Y_\phi$ and a relative homology class $Z\in H_2(Y_\phi,\alpha,\gamma)$. The {\it action} of an anchored orbit set $(\alpha,Z)$ is defined to be
\begin{equation}
\MA(\alpha,Z) \coloneqq \int_Z \omega_\phi.
\end{equation}
If $\phi$ is rational, the spectral invariant $c_\sigma(\phi,\gamma)$ has the following important property: There exists an anchored orbit set $(\alpha,Z)$ such that $\mathcal{A}(\alpha,Z)= c_\sigma(\phi,\gamma)$.\\
Let us now turn to the behaviour of $HP(\phi,\gamma)$ and the associated spectral invariants $c_\sigma(\phi,\gamma)$ under Hamiltonian deformations of $\phi$. It is proved in \cite{EH21} that for any Hamiltonian $H$, the periodic Floer homology $HP(\phi_H,\gamma_H)$ is naturally isomorphic to $HP(\phi,\gamma)$. Here $\gamma_H$ is a reference cycle in $Y_{\phi_H}$ determined by $\gamma$ and $H$. Roughly speaking, these natural isomorphisms are constructed by identifying mapping tori $Y_{\phi_H}$ with graphs $\on{gr}(H)$ in the symplectization $M_\phi$ as explained in subsection \ref{subsec:elementary_spectral_invariants} and considering cobordism maps. Given a class $\sigma\in HP(\phi,\gamma)$, let $\sigma_H$ denote the corresponding class in $HP(\phi_H,\gamma_H)$. Following \cite{EH21}, we introduce the notation
\begin{equation}
c_\sigma(\phi,\gamma,H)\coloneqq c_{\sigma_H}(\phi_H,\gamma_H).
\end{equation}
These spectral invariants are related to our elementary spectral invariants $c_{d,k}$ as follows: Let $\gamma\subset Y_\phi$ be a reference cycle of positive degree $d>0$. Let $\sigma\in HP(\phi,\gamma)\setminus\{0\}$ be a non-vanishing class and assume that $U^k\sigma \neq 0$ for some non-negative integer $k$. Then for all Hamiltonians $H_\pm\in C^\infty(Y_\phi)$ we have
\begin{equation}
\label{eq:relationship_spectral_numbers_elemantary_PFH}
c_{d,k}(H_+,H_-)\leq c_\sigma(\phi,\gamma,H_+) -c_{U^k\sigma}(\phi,\gamma,H_-)+ \int_\gamma(H_+-H_-)dt.
\end{equation}
This follows from an argument similar to the one given in \cite{Hut22} to prove that the capacities $c_k^H$ are bounded from above by the ECH capacities. The rough idea is that the $U$-map on PFH is defined by counting pseudo-holomorphic curves in the symplectization $M_\phi$ with point constraints. Thus a non-trivial $U$-map between PFH classes of a certain action difference implies the existence of pseudo-holomorphic curves with point constraints and with symplectic area bounded by the action difference.

\begin{remark}
From this perspective, PFH should be regarded as a sophisticated tool for producing holomorphic curves. One of the main points of this paper is to show that in many interesting situations, holomorphic curves relevant for applications can be obtained by more elementary methods.
\end{remark}

In particular, it follows from \eqref{eq:relationship_spectral_numbers_elemantary_PFH} that the existence of a degree $d$ reference cycle $\gamma$ with the property that there exists a PFH class $\sigma$ such that $U^k\sigma\neq 0$ implies that $c_{d,k}$ is finite. Fortunately, such classes $\sigma$ are abundant, at least if $\phi$ is rational. We recall the following definition from \cite{EH21}.

\begin{definition}
\label{def:u_cyclic}
A nonzero class $\sigma\in HP(\phi,\gamma)$ is called $U$-cyclic if there exists a positive integer $m$ such that
\begin{equation}
U^{m(d-g+1)}\sigma = q^{-m[\Sigma]}\sigma.
\end{equation}
Here $d$ denotes the degree of $\gamma$ and $g$ the genus of $\Sigma$. Moreover, $q$ is the formal variable of the Novikov ring $\Lambda$.
\end{definition}

Clearly, the existence of a $U$-cyclic class in degree $d$ implies that $c_{d,k}$ is finite for all non-negative integers $k$. We have the identity
\begin{equation}
\label{eq:novikov_twisting_spectral_invariant}
c_{q^{-[\Sigma]}\sigma}(\phi,\gamma,H) = c_\sigma(\phi,\gamma,H) - A
\end{equation}
where $A$ is the area of $(\Sigma,\omega)$. It follows from \eqref{eq:novikov_twisting_spectral_invariant} and \eqref{eq:relationship_spectral_numbers_elemantary_PFH} that for every $U$-cyclic class $\sigma$ we have
\begin{equation}
\label{eq:upper_bound_elementary_pfh_for_u_cyclic}
c_{d,k(d-g+1)}(H_+,H_-) \leq c_\sigma(\phi,\gamma,H_+) -c_\sigma(\phi,\gamma,H_-)+ \int_\gamma(H_+-H_-)dt + kA
\end{equation}
whenever $k$ is an integer multiple of the integer $m$ from Definition \ref{def:u_cyclic}. It follows easily from \eqref{eq:upper_bound_elementary_pfh_for_u_cyclic} and the definition of $c_d$ in \eqref{eq:definition_spectral_invariant_weyl} that the existence of a $U$-cyclic element in degree $d$ implies that $c_d$ is finite. In fact, \eqref{eq:upper_bound_elementary_pfh_for_u_cyclic} can be regarded as a motivation for the definition of $c_d$.\\

The existence of an abundance of $U$-cyclic PFH classes was proved by Cristofaro-Gardiner-Pomerleano-Prasad-Zhang in \cite{CPPZ21} using Lee-Taubes' isomorphism between PFH and monopole Floer homology \cite{LT12} and a computation of monopole Floer homology based on Kronheimer-Mrowka's book \cite{KM08}. In order to state their result, we make the following definition.

\begin{definition}
\label{definition:monotone_reference_cycle}
Let $E\subset TY_\phi$ denote the vertical tangent bundle of the fibre bundle $Y_\phi\rightarrow\BR/\BZ$. A reference cycle $\gamma\subset Y_\phi$ is called {\it monotone} if $[\omega_\phi]\in H^2(Y_\phi;\BR)$ is a real multiple of the image of the cohomology class $c_1(E)+2 \on{PD}([\gamma])\in H^2(Y_\phi;\BZ)$.
\end{definition}

\begin{remark}
The existence of a monotone reference cycle $\gamma$ implies that $\phi$ is rational. Conversely, for every rational $\phi$ there exist monotone reference cycles of arbitrarily large degree. In fact, the degrees of monotone reference cycles form an infinite arithmetic progression.
\end{remark}

\begin{theorem}[Existence of $U$-cyclic elements \cite{CPPZ21}]
Suppose that $\gamma\subset Y_\phi$ is a monotone reference cycle of degree greater than $\max\{2g-2,0\}$. Then $HP(\phi,\gamma)$ contains $U$-cyclic elements.
\end{theorem}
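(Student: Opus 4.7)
The plan is to reduce the existence of $U$-cyclic elements in $HP(\phi,\gamma)$ to a computation in Seiberg-Witten Floer (monopole) cohomology, exploiting the monotonicity hypothesis on $\gamma$.

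First I would invoke the Lee--Taubes isomorphism of \cite{LT12}, which identifies $HP(\phi,\gamma)$ with the monopole Floer cohomology $\widehat{HM}^*(Y_\phi,\mathfrak{s}_\gamma;\Lambda)$ associated to the spin-c structure $\mathfrak{s}_\gamma$ whose first Chern class is $c_1(E)+2\on{PD}([\gamma])$ (up to an orientation/grading convention). The hypothesis that $\gamma$ is monotone in the sense of Definition \ref{definition:monotone_reference_cycle} is precisely the condition that $[\omega_\phi]$ is a positive real multiple of $c_1(\mathfrak{s}_\gamma)$; this makes the resulting Floer theory \emph{monotone}, so that the Novikov ring $\Lambda$ admits a compatible $\BR$-grading and the $U$-map interacts with the Novikov variable $q^{[\Sigma]}$ in a controlled way. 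Crucially, under this isomorphism the $U$-map on PFH corresponds to the $U$-map on monopole Floer cohomology (this is a separate compatibility statement due to Lee--Taubes in the generic setting and, in the monotone/twisted setting needed here, part of the input from \cite{CPPZ21}).

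Next I would use the structural theorem for monopole Floer cohomology from Kronheimer--Mrowka's book \cite{KM08}. Because the degree $d$ of $\gamma$ exceeds $\max\{2g-2,0\}$, the first Chern class $c_1(\mathfrak{s}_\gamma)$ pairs non-trivially with the fibre $[\Sigma]$, so the relevant spin-c structure is \emph{not} torsion and $\widehat{HM}^*(Y_\phi,\mathfrak{s}_\gamma)$ is a finitely generated module over $\BF_2[U]$ whose reduced part (the image of the long exact sequence coming from the bar version) is finite-dimensional over $\BF_2$. In the monotone regime, multiplication by $U^{d-g+1}$ has a geometric interpretation as multiplication by $q^{-[\Sigma]}$, up to a finite-dimensional correction: this is the statement that $U^{d-g+1}-q^{-[\Sigma]}$ acts nilpotently on a large subspace, or equivalently that some non-zero generalized eigenvector exists. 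Finite-dimensionality of the part on which $U^{d-g+1}-q^{-[\Sigma]}$ fails to be an isomorphism then forces the existence of a non-zero $\sigma$ and an $m>0$ with $(U^{d-g+1})^m\sigma = (q^{-[\Sigma]})^m\sigma$, which is precisely Definition \ref{def:u_cyclic}.

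The main obstacle is the second step: matching the algebraic action of the Novikov variable on the PFH side with the module structure coming from $H_1(Y_\phi)/\on{tors}$ (equivalently $H_2$) on the monopole side, and checking that the monotonicity of $\gamma$ translates faithfully into the monotonicity hypothesis under which Kronheimer--Mrowka's theorem provides the $U$-periodic generator. Once those compatibilities are in place (this is the technical heart of \cite{CPPZ21}, combining the cobordism-invariance and twisted coefficient versions of the Lee--Taubes isomorphism), the existence of a $U$-cyclic element follows as sketched. I would close by remarking that the large-degree hypothesis $d>\max\{2g-2,0\}$ is exactly what ensures the spin-c structure is non-torsion so that the monopole Floer group is finitely generated as an $\BF_2[U]$-module, which is what makes the pigeonhole argument produce the required $m$.
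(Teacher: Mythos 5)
The paper does not prove this theorem at all: it is stated as an imported result and attributed to \cite{CPPZ21}, with only a one-sentence indication that the proof there goes through the Lee--Taubes isomorphism \cite{LT12} and the structure theory of monopole Floer homology from \cite{KM08}. So there is no ``paper's own proof'' to compare your attempt against.

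That said, your outline does correctly reproduce the two ingredients the paper names (Lee--Taubes to pass from $HP(\phi,\gamma)$ to monopole Floer cohomology of $Y_\phi$ in the spin-c structure $\mathfrak{s}_\gamma$ with $c_1(\mathfrak{s}_\gamma)=c_1(E)+2\on{PD}([\gamma])$, and Kronheimer--Mrowka's theorem that for a non-torsion spin-c structure the relevant group is a finitely generated $\BF_2[U]$-module), and you correctly identify that the hypothesis $d>\max\{2g-2,0\}$ is what makes $\langle c_1(\mathfrak{s}_\gamma),[\Sigma]\rangle=2(d-g+1)>0$, hence $\mathfrak{s}_\gamma$ non-torsion. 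The weak point is the central step: from ``finitely generated $\BF_2[U]$-module'' you assert that ``$U^{d-g+1}-q^{-[\Sigma]}$ acts nilpotently on a large subspace'' and that a pigeonhole argument produces $m>0$ and $\sigma\neq 0$ with $U^{m(d-g+1)}\sigma=q^{-m[\Sigma]}\sigma$. That step is not justified: $U$ and $q^{-[\Sigma]}$ are different operators (one degree-lowering, one a Novikov shift), and it is not automatic from finite generation alone that any nonzero element is periodic under the combined operator in the precise sense of Definition \ref{def:u_cyclic}. The actual argument in \cite{CPPZ21} has to track carefully how the Novikov-ring module structure, the $H_1$-action on monopole Floer, and the $U$-map interact under Lee--Taubes; you flag this as ``the technical heart'' but then treat it as established. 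As a reconstruction this is a plausible high-level road map, but as a proof the key identification that makes the periodicity argument close is missing.
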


\begin{cor}
\label{cor:finiteness_rational_phi}
Suppose that $\phi$ is rational. Then there exist arbitrarily large positive integers $d$ such that $c_{d,k}^\phi$ and $c_d^\phi$ are finite for all $k$.
\end{cor}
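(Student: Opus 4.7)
The plan is to combine the three ingredients assembled in the preceding subsection, namely the existence of monotone reference cycles of arbitrarily large degree for rational $\phi$, the existence of $U$-cyclic classes in PFH for monotone reference cycles of sufficiently large degree, and the comparison inequality \eqref{eq:upper_bound_elementary_pfh_for_u_cyclic} between the elementary invariants and PFH spectral numbers.

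First, I would invoke the remark following Definition \ref{definition:monotone_reference_cycle}: since $\phi$ is assumed rational, there exist monotone reference cycles $\gamma \subset Y_\phi$ whose degrees form an infinite arithmetic progression. In particular, one can choose such a $\gamma$ of degree $d$ arbitrarily large, and in particular with $d > \max\{2g-2, 0\}$. Next, by the theorem of Cristofaro-Gardiner--Pomerleano--Prasad--Zhang quoted just above the corollary, $HP(\phi,\gamma)$ contains a $U$-cyclic element $\sigma$ for each such $\gamma$; let $m \geq 1$ be the associated integer from Definition \ref{def:u_cyclic}, so that $U^{m(d-g+1)}\sigma = q^{-m[\Sigma]}\sigma$.

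Now I would feed this into the bound \eqref{eq:upper_bound_elementary_pfh_for_u_cyclic}: for every non-negative integer $k$ that is a multiple of $m$, and for all $H_\pm \in C^\infty(Y_\phi)$,
\begin{equation*}
c_{d,k(d-g+1)}(H_+,H_-) \leq c_\sigma(\phi,\gamma,H_+) - c_\sigma(\phi,\gamma,H_-) + \int_\gamma (H_+-H_-)\,dt + kA,
\end{equation*}
and the right-hand side is finite. This shows that $c_{d,k(d-g+1)}(H_+,H_-) < +\infty$ for infinitely many $k$. Since by Theorem \ref{theorem:spectral_invariants_properties_basic}(2) the family $c_{d,k'}$ is monotone increasing in $k'$, and since (as noted right after the definition of the elementary invariants) $c_{d,k'}$ is either everywhere $+\infty$ or everywhere finite, a single finite value forces $c_{d,k'}$ to be finite for every $k' \geq 0$.

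Finally, to handle $c_d^\phi$, I would use its definition \eqref{eq:definition_spectral_invariant_weyl}: the supremum is taken over $k \in \BZ_{\geq 0}$ of $c_{d,k(d-g+1)}(H_+,H_-) - kA$. Restricting \eqref{eq:upper_bound_elementary_pfh_for_u_cyclic} to $k$ that are multiples of $m$ gives a uniform upper bound on these differences in terms of the PFH spectral invariant $c_\sigma$, which is independent of $k$; intermediate values of $k$ are controlled via the increasing property combined with a trivial estimate of $A$, so the supremum is finite. Thus $c_d^\phi(H_+,H_-) < +\infty$, and by the same dichotomy noted after \eqref{eq:definition_spectral_invariant_weyl} this forces $c_d^\phi$ to take values in $\BR$ everywhere. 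Since $d$ could be chosen arbitrarily large within the arithmetic progression of monotone degrees, the corollary follows. The only potentially subtle point is the bookkeeping on indices $k$ versus multiples of $m$, which is easily absorbed because sublinearity/monotonicity of $c_{d,k}$ in $k$ and the $+kA$ shift in the definition of $c_d$ allow one to interpolate between multiples of $m$ and arbitrary non-negative integers.
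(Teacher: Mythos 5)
Your proof is correct and follows the same route the paper implicitly takes: monotone reference cycles of arbitrarily large degree for rational $\phi$, the $U$-cyclic existence theorem from \cite{CPPZ21}, and the comparison bound \eqref{eq:upper_bound_elementary_pfh_for_u_cyclic}, combined with monotonicity in $k$ and the definition of $c_d$. The paper states the corollary without spelling out the bookkeeping, and your proposal supplies exactly the intended details, including the interpolation between multiples of $m$ and arbitrary $k$.
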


\subsection{Quantitative closing lemmas}

In this section, we explain an application of our elementary spectral invariants to the dynamics of area-preserving surface diffeomorphisms. More specifically, we are able to reprove certain cases of a quantitative closing lemma which was recently established by Edtmair-Hutchings in \cite{EH21} (see also Cristofaro-Gardiner-Prasad-Zhang \cite{CPZ21} for an independent proof of closing lemmas). Before stating the quantitative closing lemma, we need the following definition from \cite{EH21}.

\begin{definition}
\label{def:ual_admissible_hamiltonians}
Let $\MU\subset\Sigma$ be a nonempty open set, let $l\in (0,1)$, and let $a\in (0,\on{area}(\MU))$. A $(\MU,a,l)$-{\it admissible Hamiltonian} is a smooth function $H:[0,1]\times \Sigma\rightarrow \BR$ such that:
\begin{itemize}
\item $H(t,x) = 0$ for $t$ close to $0$ or $1$.
\item $H(t,x) = 0$ for $x\notin\MU$.
\item $H\geq 0$
\item There is an interval $I\subset (0,1)$ of length $l$ and a disk $D\subset \MU$ of area $a$ such that $H\geq 1$ on $I\times D$.
\end{itemize}
\end{definition}

The following result is a special case of the quantitative closing lemma  \cite[Theorem 7.4]{EH21}. We give an independent elementary proof.

\begin{theorem}
\label{theorem:quantitative_closing_lemma_hamiltonian_diffeomorphisms}
Let $\phi$ be a Hamiltonian diffeomorphism of $(\Sigma,\omega)$. Let $\MU\subset\Sigma$ be a non-empty open subset and let $H$ be a $(\MU,a,l)$-admissible Hamiltonian. If $0<\delta\leq al^{-1}$, then for some $\tau\in [0,\delta]$, the map $\phi_{\tau H}$ has a periodic orbit intersecting $\MU$ with period d satisfying
\begin{equation}
\label{eq:quantitative_closing_lemma_hamiltonian_diffeomorphisms_period_bound}
d\leq g + \lfloor Al^{-1}\delta^{-1}\rfloor.
\end{equation}
\end{theorem}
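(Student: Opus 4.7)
The plan is to set $D \coloneqq g + \lfloor A l^{-1}\delta^{-1}\rfloor$ and to argue by contradiction using the spectral invariant $c_D^\phi$. Since $\phi$ is Hamiltonian, the mapping torus $Y_\phi$ is diffeomorphic to $S^1\times\Sigma$ and the class $[\omega_\phi]$ is a rational multiple of an integral class, so $\phi$ is automatically rational in the sense of Definition \ref{def:rational_diffeomorphism}, and $c_D^\phi$ is finite by Theorem \ref{theorem:finiteness_for_hamiltonian_diffeomorphisms}. I would further reduce, by a standard density argument, to the case where $\phi$ is nondegenerate, so that periodic orbits of bounded period are isolated. Supposing, for contradiction, that for every $\tau\in[0,\delta]$ the map $\phi_{\tau H}$ has no periodic orbit of period $\leq D$ intersecting $\MU$, the strategy is to pin the value $c_D^\phi(\delta H,0)$ between the value $0$ and a strictly positive lower bound.

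For the lower bound I would use the packing property. Because $H \geq 1$ on $I\times D$, the product region $\{(s,[t,x])\in M_\phi : 0\leq s\leq \delta,\,(t,x)\in I\times D\}$ lies inside $M_{\delta H, 0}$, and by Moser's theorem it is symplectomorphic (up to arbitrarily small shrinking) to the standard polydisk $P(\delta l, a)$; the hypothesis $\delta\leq a l^{-1}$ ensures $\delta l \leq a$. Since the polydisk is a convex toric domain, Hutchings' elementary capacities agree with the ECH capacities, given by $c_k^H(P(\delta l, a)) = \min\{m\delta l + n a : m,n\in\BZ_{\geq 0},\ (m+1)(n+1)\geq k+1\}$, which grows asymptotically like $2\sqrt{k \delta l a}$. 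Combining the packing property $c_{D,k}^\phi(\delta H, 0)\geq c_k^H(P(\delta l, a))$ with the definition of $c_D^\phi$ and optimizing the expression $c_{D,k(D-g+1)}^\phi(\delta H,0) - kA$ over $k\geq 0$ yields $c_D^\phi(\delta H, 0)\geq (D-g+1)\delta l a/A$, which is $\geq a$ by the choice of $D$.

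For the vanishing upper bound, I would observe that since $H$ vanishes outside $\MU$, the Hamiltonian flow $\phi_{\tau H}^t$ is trivial along any trajectory staying outside $\MU$, so every periodic orbit of $\phi_{\tau H}$ of period $\leq D$ whose trajectory avoids $\MU$ coincides pointwise with a periodic orbit of $\phi$ outside $\MU$. Under the no-closing assumption, every degree-$D$ orbit set of $Y_{\phi_{\tau H}}$ therefore corresponds to a $\tau$-independent $\phi$-orbit set sitting outside $\MU$. Consequently the relative action spectrum $\on{Spec}_D(\tau H,0)$ is $\tau$-independent, and rationality (together with nondegeneracy) makes it countable. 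Spectrality forces the Lipschitz function $\tau\mapsto c_D^\phi(\tau H,0)$ to take values in this fixed countable subset of $\BR$; since any continuous map from a connected interval into a countable subset is constant and $c_D^\phi(0,0)=0$ by vanishing, we conclude $c_D^\phi(\delta H,0)=0$, contradicting the lower bound.

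The main obstacle will be the lower bound computation: embedding a polydisk of optimal proportions into $M_{\delta H,0}$ and carefully balancing the sublinear $\sqrt{k}$-growth of its ECH capacities against the linear subtraction $kA$ in the definition of $c_D^\phi$. The choice $D=g+\lfloor A l^{-1}\delta^{-1}\rfloor$ is tuned so that the optimization produces a clean lower bound proportional to $a$; the reduction to nondegenerate $\phi$ is routine, using the Lipschitz continuity of the spectral invariants.
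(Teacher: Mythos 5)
Your overall architecture matches the paper's: derive a contradiction from the assumed absence of closing orbits by (i) using constancy of the relative action spectrum together with spectrality, Lipschitz continuity, and the vanishing/shift properties to pin down a spectral invariant of $(\delta H, 0)$, and (ii) playing this off against a positive lower bound obtained from a symplectic embedding and the packing property. The upper-bound half of your argument is essentially the same as the paper's (the paper's proof of Proposition~\ref{prop:from_spectral_gap_to_closing_lemma} also shrinks $\MU$ to an inner open set $\MV$ and enlarges the $\tau$-interval slightly, but that is a cosmetic difference).

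The lower bound is where you have a genuine gap, and it stems from working with $c_D^\phi$ rather than $c_{D,1}^\phi$. You invoke the formula $c_k^H(P(\delta l,a)) = \min\{m\delta l + na : (m+1)(n+1)\geq k+1\}$ ``since the polydisk is a convex toric domain,'' but the cited paper of Hutchings establishes only $c_k^H \leq c_k^{ECH}$ in general, plus the \emph{asymptotic} Weyl law; equality for polydisks is not proved there, and in any case $c_k^H\leq c_k^{ECH}$ runs in the \emph{wrong direction} for a lower bound on the packing. Even granting equality, your optimization $\sup_{k\geq 0}\big(c_{k(D-g+1)}^H(P(\delta l,a))-kA\big)\geq (D-g+1)\delta l a/A$ is obtained by plugging in the asymptotic $2\sqrt{\ell\,\delta l\, a}$, but the supremum is over all $k$ and the asymptotic formula has an error term; the supremum is attained at a fixed $k^*\approx (D-g+1)\delta l a/A^2$, and you cannot push $k\to\infty$ to suppress the error, so no hard inequality of the form ``$\geq a$'' follows. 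The paper sidesteps all of this: it uses only $c_{d,1}^\phi$ (the spectral gap $\on{gap}_d(\phi)=c_{d,1}^\phi(0,0)$) and a single embedded ball $B^4(b)$ with $b=\min(\delta l, a)$, for which $c_1^H(B^4(b))=b$ is known exactly, and the contradiction comes from $\on{gap}_{d}(\phi) \geq b > \on{gap}_d(\phi)$. Combined with Lemma~\ref{lem:small_spectral_gaps_for_hamiltonian_diffeomorphisms} (which gives $\on{gap}_{d_0}(\phi)\leq A/(d_0-g+1)$, and this is exactly where the arithmetic $d_0 = g + \lfloor Al^{-1}\delta^{-1}\rfloor$ is used), this yields the stated period bound cleanly. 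If you replace $c_D^\phi$ by $c_{D,1}^\phi$ and the polydisk by a single ball in your proposal, the argument closes without needing any unproved capacity computations.
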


Standard arguments then imply the following result, which was first proved by Asaoka-Irie \cite[Corollary 1.2]{AI16}.

\begin{cor}
The periodic orbits of a $C^\infty$ generic Hamiltonian diffeomorphism of $(\Sigma,\omega)$ form a dense subset of $\Sigma$.
\end{cor}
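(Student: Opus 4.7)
The plan is to deduce the corollary from Theorem \ref{theorem:quantitative_closing_lemma_hamiltonian_diffeomorphisms} by a standard Baire category argument.

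I would start by fixing a countable basis $\{\mathcal{U}_n\}_{n\geq 1}$ of open sets of $\Sigma$ and, for every pair of positive integers $(n,d)$, setting
$$W_{n,d} \coloneqq \{\phi \in \Ham(\Sigma,\omega) : \phi^d \text{ has a non-degenerate fixed point in } \mathcal{U}_n\}.$$
Because non-degenerate fixed points persist and depend continuously on $\phi$ by the implicit function theorem, each $W_{n,d}$ is $C^\infty$-open. The set of Hamiltonian diffeomorphisms whose periodic orbits are dense in $\Sigma$ contains $\mathcal{R} \coloneqq \bigcap_{n \geq 1} \bigcup_{d \geq 1} W_{n,d}$, so since $\Ham(\Sigma,\omega)$ with its $C^\infty$ topology is a Baire space, it suffices to show that $\bigcup_{d\geq 1} W_{n,d}$ is $C^\infty$-dense for every $n$.

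For the density step, I would fix $\phi \in \Ham(\Sigma,\omega)$ together with a $C^\infty$ neighborhood $V$ of $\phi$, and produce some $\phi' \in V \cap W_{n,d}$. Choose a disk $D \subset \mathcal{U}_n$ of area $a$ and a $(\mathcal{U}_n, a, l)$-admissible Hamiltonian $H$ for some $l \in (0,1)$. By Theorem \ref{theorem:quantitative_closing_lemma_hamiltonian_diffeomorphisms}, for every sufficiently small $\delta \in (0, a l^{-1}]$ there exists $\tau \in [0,\delta]$ such that $\phi_{\tau H}$ has a periodic orbit intersecting $\mathcal{U}_n$ of period $d \leq g + \lfloor A l^{-1} \delta^{-1} \rfloor$. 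Choosing $\delta$ small enough forces $\phi_{\tau H} \in V$. The orbit obtained might be degenerate, but a standard Kupka--Smale type perturbation --- a small Hamiltonian perturbation supported in a neighborhood of the orbit that generates the appropriate quadratic shift of the linearized return map in Darboux coordinates --- yields $\phi' \in V$ with a non-degenerate periodic orbit of the same period $d$ still contained in $\mathcal{U}_n$. This places $\phi'$ in $V \cap W_{n,d}$, proving density.

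The only step beyond Theorem \ref{theorem:quantitative_closing_lemma_hamiltonian_diffeomorphisms} itself is the Kupka--Smale type perturbation that replaces a possibly degenerate periodic orbit by a non-degenerate one of the same period inside the prescribed open set and inside $V$. This is the main --- though classical and entirely elementary --- technical point; everything else in the argument is bookkeeping and Baire's theorem on the Fréchet manifold $\Ham(\Sigma,\omega)$.
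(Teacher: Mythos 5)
Your argument is correct and is precisely the ``standard argument'' the paper alludes to without writing out: the corollary is stated with only the phrase ``standard arguments then imply'' and a citation to Asaoka--Irie, so there is no explicit proof in the paper to compare against. The Baire category scheme you describe --- $C^\infty$-openness of $W_{n,d}$ via persistence of non-degenerate periodic points under the implicit function theorem, and $C^\infty$-density of $\bigcup_{d} W_{n,d}$ via Theorem \ref{theorem:quantitative_closing_lemma_hamiltonian_diffeomorphisms} (taking $\delta\to 0$ so that $\phi_{\tau H}\to\phi$ in $C^\infty$) followed by a Kupka--Smale perturbation supported near one point of the produced orbit, modifying the linearized return map so that $1$ is no longer an eigenvalue while keeping the orbit and its period --- is exactly the intended route, and the one small phrasing slip (``orbit \ldots still contained in $\mathcal{U}_n$'' should read ``still intersecting $\mathcal{U}_n$'') does not affect the argument since membership in $W_{n,d}$ only requires one point of the orbit to lie in $\mathcal{U}_n$.
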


\begin{remark}
Using PFH spectral invariants, one can show that Theorem \ref{theorem:quantitative_closing_lemma_hamiltonian_diffeomorphisms} continues to hold for arbitrary rational area-preserving diffeomorphisms $\phi$ (possibly with slightly different expressions for the bound on the period \eqref{eq:quantitative_closing_lemma_hamiltonian_diffeomorphisms_period_bound}). This is done in \cite{EH21} (in combination with the existence result for $U$-cyclic classes from \cite{CPPZ21}).
\end{remark}

\subsection{Simplicity conjecture and extension of Calabi}
\label{subsec:simplicity_conjecture_and_extension_of_calabi}

We summarize recent breakthroughs in $C^0$ symplectic geometry which we are able to reprove using our elementary spectral invariants. In the following, we deviate from our convention and allow the surface $(\Sigma,\omega)$ to have boundary. This means that $\Sigma$ is a compact, connected, orientable surface, possibly with boundary. We begin by recalling some basic concepts. We let $\on{Ham}(\Sigma,\omega)$ denote the group of Hamiltonian diffeomorphisms of $(\Sigma,\omega)$. If the boundary of $\Sigma$ is non-empty, the generating Hamiltonians are required to be compactly supported in the interior of $\Sigma$. We recall that $\on{Ham}(\Sigma,\omega)$ admits a bi-invariant metric $d_H$ called the {\it Hofer metric}. It is defined as follows: Given a Hamiltonian $H:[0,1]\times\Sigma\rightarrow\BR$, its {\it Hofer norm} $\|H\|$ is given by
\begin{equation}
\|H\|\coloneqq \int_0^1 \left( \max_\Sigma H_t - \min_\Sigma H_t \right) dt.
\end{equation}
The Hofer distance $d_H$ between $\phi,\psi\in\on{Ham}(\Sigma,\omega)$ is defined by
\begin{equation}
d_H(\phi,\psi)\coloneqq \inf\{\|H\|\mid \phi\circ\psi^{-1} = \phi_H^1\}.
\end{equation}
Next, let us introduce the group $\ol{\on{Ham}}(\Sigma,\omega)$ of {\it Hamiltonian homeomorphisms}. First, we define $\on{Homeo}_0(\Sigma,\omega)$ to be the component of the identity of the group of all homeomorphisms of $\Sigma$ which agree with the identity in some neighbourhood of the boundary and preserve the measure induced by the area form $\omega$. We define the metric $d_{C^0}$ on $\on{Homeo}_0(\Sigma,\omega)$ by
\begin{equation}
d_{C^0}(\phi,\psi)\coloneqq \sup_{x\in \Sigma} d(\phi(x),\psi(x))
\end{equation}
where $d$ is some auxiliary Riemannian distance on $\Sigma$. Then $\ol{\on{Ham}}(\Sigma,\omega)$ is defined to be the closure of $\on{Ham}(\Sigma,\omega)$ inside $\on{Homeo}_0(\Sigma,\omega)$ with respect to the $C^0$-metric $d_{C^0}$. It is a normal subgroup of $\on{Homeo}_0(\Sigma,\omega)$ and it is proper unless $\Sigma\in\{\BD,S^2\}$.\\

It has been a long-standing open question going back to Fathi \cite{Fat80} whether the group $\ol{\on{Ham}}(\Sigma,\omega)$ is simple. This problem is known as the {\it simplicity conjecture} (see e.g. \cite{MS17}). It was recently resolved by Cristofaro-Gardiner-Humili\`{e}re-Seyfaddini in \cite{CHS20} and \cite{CHS21} in the case $\Sigma\in\{\BD,S^2\}$ using PFH spectral invariants. For arbitrary $\Sigma$, the conjecture was settled by Cristofaro-Gardiner-Humili\`{e}re-Mak-Seyfaddini-Smith in \cite{CHMSS21} using spectral invariants defined via quantitative Heegaard Floer homology.

\begin{theorem}[Simplicity conjecture \cite{CHS20} \cite{CHS21} \cite{CHMSS21}]
\label{theorem:simplicity_conjecture}
The group $\ol{\on{Ham}}(\Sigma,\omega)$ is not simple.
\end{theorem}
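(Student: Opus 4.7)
The plan is to port the strategy of Cristofaro-Gardiner–Humilière–Seyfaddini \cite{CHS20,CHS21} to our elementary setting, with the PFH spectral invariants replaced throughout by the invariants $c_d$ of Theorem \ref{theorem:spectral_invariants_properties_weyl}. The objective is to produce a proper nontrivial normal subgroup of $\overline{\on{Ham}}(\Sigma,\omega)$. By the same reductions as in \cite{CHS20,CHS21,CHMSS21}, it suffices to treat the cases $\Sigma\in\{\BD,S^2\}$; I focus on the disk case, the sphere case being analogous.

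For a Hamiltonian $H\in C^\infty([0,1]\times\BD)$ compactly supported in the interior, viewed as an element of $C^\infty(Y_{\on{id}})$, consider the normalized spectral slopes $c_d(H,0)/d$. Combining the Weyl law (Theorem \ref{theorem:spectral_invariants_properties_weyl}(7)) with the Hofer-Lipschitz estimate (Theorem \ref{theorem:spectral_invariants_properties_weyl}(5)) gives a family of uniformly Hofer-Lipschitz functionals $\bar c_d$ on $\on{Ham}(\BD,\omega)$ whose pointwise limit as $d\to\infty$ is the Calabi invariant $\on{Cal}(H)=A^{-1}\int H\,dt\wedge\omega$. Following \cite{CHS20}, I would use this family to define an \emph{extended Calabi} $\widetilde{\on{Cal}}:\tilde G\to\BR$ on a subgroup $\tilde G\subseteq\overline{\on{Ham}}(\BD,\omega)$ consisting of those Hamiltonian homeomorphisms admitting an approximating sequence in $\on{Ham}(\BD,\omega)$ for which the appropriate diagonal limit of the $\bar c_d(\phi_n)$ stabilizes. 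Naturality of $c_d$ under the symplectomorphism $f_H$ from equation \eqref{eq:diffeomorphism_between_symplectizations} implies that $\tilde G$ is normal in $\overline{\on{Ham}}(\BD,\omega)$ and that $\on{Ham}(\BD,\omega)\subseteq\tilde G$.

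The central step is to exhibit elements of $\overline{\on{Ham}}(\BD,\omega)\setminus\tilde G$, showing that $\tilde G$ is a \emph{proper} subgroup. These are constructed as $C^0$-limits of \emph{infinite twists}: compositions of radial twists concentrated on shrinking concentric annuli, with twist parameters tuned so that two different approximating sequences yield different limit values of $\bar c_d$ as $d\to\infty$. The key ingredient is a quantitative \emph{lower bound} on $c_d$ applied to the finite-twist approximations, which in the PFH setting of \cite{CHS20} is derived by computing PFH spectral numbers of monotone reference cycles on such twists. In our elementary setting, I would derive the analogous lower bound from the Packing property (Theorem \ref{theorem:spectral_invariants_properties_basic}(8)) applied to disjoint symplectic embeddings of rescaled Liouville balls or polydisks into the annular twist regions, together with Hutchings' asymptotic formula for $c_k^H$ of the ball from \cite{Hut22}.

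The main technical obstacle is this packing construction: the annular radii, twist angles, and embedded Liouville domains must be chosen precisely so that (i) each finite-twist approximation is Hamiltonian with controllable $c_d$, (ii) the $C^0$-limit exists as a genuine homeomorphism, and (iii) the packing lower bounds force $\bar c_d(\phi_n)$ to exceed the Calabi contribution by an amount that prevents the diagonal limit from stabilizing. Once a single such infinite twist $\phi_\infty$ is constructed, $\phi_\infty\notin\tilde G$, so $\tilde G$ is a proper normal subgroup of $\overline{\on{Ham}}(\BD,\omega)$; standard fragmentation transfers this conclusion to $\overline{\on{Ham}}(\Sigma,\omega)$ for arbitrary $\Sigma$, proving non-simplicity.
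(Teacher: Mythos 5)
Your overall strategy is aligned with the paper's: build functionals out of the $c_d$'s, construct an infinite twist in $\ol{\on{Ham}}(\Sigma,\omega)$, and show it lies outside a proper normal subgroup. But there are three genuine gaps.

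First, your normal subgroup $\tilde G$ is not the right one and is not well-defined as stated. You define it by a ``diagonal limit of the $\bar c_d(\phi_n)$ stabilizing,'' which is vague and conflates the simplicity argument with the Calabi-extension argument; moreover your justification of normality (``naturality of $c_d$ under $f_H$'') does not actually show conjugation-invariance of this stabilization condition. The paper instead uses the group $\on{FHomeo}(\Sigma,\omega)$ of finite energy homeomorphisms (Definition~\ref{def:finite_energy_homeomorphisms}), whose definition involves only boundedness of Hofer norms and whose normality is already established in \cite{CHS20}. The two conditions on $\on{FHomeo}$ that one needs from the spectral invariants are clean: a uniform bound on $|\gamma_d/d|$ (or $|\eta_d|$ in the closed case), following from Hofer Lipschitz continuity plus normalization.

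Second, and more seriously, you omit the crucial $C^0$-continuity step. The Hofer Lipschitz estimate you invoke controls $\gamma_d$ with respect to the Hofer metric, not the $C^0$-metric, so it gives no way to pass from the approximating sequence $\phi_j\in\on{Ham}$ to the limit homeomorphism $\phi\in\ol{\on{Ham}}$. The paper establishes $C^0$-continuity of $\gamma_d$ (or of $\eta_d$) in Theorems~\ref{theorem:spectral_invariants_properties_diffeomorphisms_nonempty_boundary} and \ref{theorem:spectral_invariants_properties_diffeomorphisms_closed}, and this requires the support control property plus a fragmentation lemma (Lemma~\ref{lem:fragmentation}, i.e.\ \cite[Lemma 3.11]{CHS21}). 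This is the hard part of the argument and is absent from your proposal.

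Third, the ``packing lower bound on finite twists'' you propose, while not wrong in spirit, is an unnecessary detour. The packing property already feeds into the Weyl law for $c_d$; once the Weyl law is in hand, the properness argument is short (Lemma~\ref{lem:infinite_twist}): a single radial twist $\phi(r,\theta)=(r,\theta+f(r))$ with $\int_0^R r^3 f(r)\,dr=+\infty$ has monotone smooth approximations $\phi_j$ with $\on{Cal}(\phi_j)\to+\infty$; by monotonicity and $C^0$-continuity, $\gamma_d(\phi_j)/d \le \gamma_d(\phi)/d$, and the asymptotics $\lim_d \gamma_d(\phi_j)/d = A^{-1}\on{Cal}(\phi_j)$ then force $\sup_d|\gamma_d(\phi)/d|=+\infty$. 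Finally, a minor point: the reduction to $\Sigma\in\{\BD,S^2\}$ is not automatic (\cite{CHMSS21} was needed precisely because \cite{CHS20,CHS21} only handled the disk and sphere), and the paper treats all compact $\Sigma$ directly, with a closed and a with-boundary case.
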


We are able to reprove this theorem using our elementary spectral invariants. We follow the same basic strategy as \cite{CHS20}, \cite{CHS21} and \cite{CHMSS21}.

\begin{definition}
\label{def:finite_energy_homeomorphisms}
A homeomorphism $\phi\in\ol{\on{Ham}}(\Sigma,\omega)$ is called a {\it finite energy homeomorphism} if there exists a sequence of Hamiltonians $H_j\in C^\infty([0,1]\times\Sigma)$ such that the sequence of Hofer norms $\|H_j\|$ is bounded and such that $\phi_{H_j}^1$ converges to $\phi$ with respect to the $C^0$-metric $d_{C^0}$. We let $\on{FHomeo}(\Sigma,\omega)$ denote the group of finite energy homeomorphisms.
\end{definition}

The finite energy homeomorphisms $\on{FHomeo}(\Sigma,\omega)$ form a normal subgroup of $\ol{\on{Ham}}(\Sigma,\omega)$ (see \cite{CHS20}). Using the Weyl law for the spectral invariants $c_d$, we will show that $\on{FHomeo}(\Sigma,\omega)$ is in fact a proper subgroup. This implies Theorem \ref{theorem:simplicity_conjecture}.\\

The second application which we are able to reprove using our spectral invariants involves the {\it Calabi homomorphism}. Assume that $\Sigma$ has non-empty boundary. Then the Calabi homomorphism $\on{Cal}:\on{Ham}(\Sigma,\omega)\rightarrow\BR$ is defined as follows. Given $\phi\in\on{Ham}(\Sigma,\omega)$, choose $H:[0,1]\times\Sigma\rightarrow \BR$ compactly supported in the interior of $\Sigma$ such that $\phi = \phi_H^1$. Then
\begin{equation}
\on{Cal}(\phi) = \int_{[0,1]\times\Sigma} H dt\wedge\omega
\end{equation}
which turns out to be independent of the choice of Hamiltonian $H$ generating $\phi$. Fathi conjectured that $\on{Cal}$ extends to a homomorphism on the group $\on{Hameo}(\Sigma,\omega)$ of {\it Hameomorphisms} introduced by M\"{u}ller-Oh in \cite{OM07}.

\begin{definition}
\label{def:hameomorphisms}
A homeomorphism $\phi\in\ol{\on{Ham}}(\Sigma,\omega)$ is called a {\it Hameomorphism} if there exists a sequence of Hamiltonians $H_j\in C^\infty([0,1]\times\Sigma)$ and a continuous Hamiltonian $H\in C^0([0,1]\times\Sigma)$ such that the sequence of Hofer norms $\|H_j-H\|$ converges to zero and such that $\phi_{H_j}^1$ converges to $\phi$ with respect to the $C^0$-metric $d_{C^0}$. We let $\on{Hameo}(\Sigma,\omega)$ denote the group of Hameomorphisms.
\end{definition}

\begin{theorem}[Extension of Calabi \cite{CHMSS21}]
\label{theorem:calabi_extension}
Suppose that $\Sigma$ has non-empty boundary. The Calabi homomorphism extends to a homomorphism $\on{Cal}:\on{Hameo}(\Sigma,\omega)\rightarrow\BR$.
\end{theorem}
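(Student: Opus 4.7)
The plan is to adapt the argument of \cite{CHMSS21} using our elementary invariants $c_d$ in place of quantitative Heegaard Floer spectral numbers. First, embed $(\Sigma,\omega)$ area-preservingly into a closed oriented surface $(\ol{\Sigma},\ol{\omega})$ of total area $\ol{A}$; Hamiltonians compactly supported in $\on{int}(\Sigma)$ extend by zero, and Hameomorphisms of $\Sigma$ extend by the identity to homeomorphisms of $\ol{\Sigma}$. The identity of $\ol{\Sigma}$ is a Hamiltonian diffeomorphism, so the finiteness result gives that $c_d^{\on{id}}$ is finite for every $d\geq g$. By the $C^0$ Lipschitz continuity of $c_d^{\on{id}}(\cdot,0)$ with constant $d$, each of these invariants extends uniquely to $C^0(Y_{\on{id}})$, and the uniform-in-$d$ estimate $|c_d^{\on{id}}(\ol{H}_n,0)/d - c_d^{\on{id}}(\ol{H},0)/d|\leq \|\ol{H}_n-\ol{H}\|_{C^0}$ allows interchange of the limits in $n$ and $d$ to promote the Weyl law of Theorem~\ref{theorem:spectral_invariants_properties_weyl}(7) to continuous Hamiltonians:
\[
\lim_{d\to\infty}\frac{c_d^{\on{id}}(\ol{H},0)}{d} \;=\; \ol{A}^{-1}\int_{[0,1]\times\ol{\Sigma}}\ol{H}\, dt\wedge\ol{\omega}\qquad \forall\, \ol{H}\in C^0(Y_{\on{id}}).
\]

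For $\phi\in\on{Hameo}(\Sigma,\omega)$ with a Hameo presentation $H_j\to H$ in Hofer norm and $\phi_{H_j}^1\to\phi$ in $C^0$, define
\[
\on{Cal}(\phi) \;:=\; \int_{[0,1]\times\Sigma} H\, dt\wedge\omega \;=\; \ol{A}\cdot\lim_{d\to\infty}\frac{c_d^{\on{id}}(\ol{H},0)}{d}.
\]
Since each $H_t$ is compactly supported in $\on{int}(\Sigma)$, we have $|\int H_t\,\omega|\leq \ol{A}(\max H_t-\min H_t)$, so the functional $H\mapsto\int H\, dt\wedge\omega$ is Hofer-Lipschitz with constant $\ol{A}$; this ensures $\int H_j\, dt\wedge\omega$ is Cauchy and converges to $\int H\, dt\wedge\omega$. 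The principal content of the theorem is that this definition is independent of the Hameo presentation.

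By the group structure of $\on{Hameo}$, independence of presentation reduces to the following key lemma: if $L_j$ is a sequence of smooth Hamiltonians with $\phi_{L_j}^1 \to \on{id}$ in $C^0$ and $L_j\to L$ in Hofer norm for some continuous $L$, then $\int L\, dt\wedge\ol{\omega}=0$. By the extended Weyl law, this is equivalent to $\lim_d c_d^{\on{id}}(\ol{L},0)/d=0$. The crux of the proof, and the main obstacle, is to establish this statement using our elementary spectral invariants. Following the strategy of \cite{CHMSS21}, the essential ingredient is a quantitative estimate of the form: there is a function $\varepsilon(\delta)\to 0$ as $\delta\to 0$ such that for any smooth $L$ with bounded Hofer norm and with $d_{C^0}(\phi_L^1,\on{id})\leq\delta$, one has $c_d^{\on{id}}(\ol{L},0)\leq d\cdot\varepsilon(\delta)$ uniformly in $d$. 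To prove such an estimate one combines the closed-curve upper bound (Theorem~\ref{theorem:spectral_invariants_properties_basic}(9)) with the non-vanishing Gromov--Taubes invariants on $\ol{\Sigma}\times S^2$ (Theorem~\ref{theorem:non_vanishing_taubes_gromov}): holomorphic curves in the class $c[\ol{\Sigma}\times *]+d[*\times S^2]$ can be localized near the graph of $\ol{L}$ when this graph is $C^0$-close to the zero section, producing small symplectic area and hence a small upper bound for $c_{d,k}^{\on{id}}$ at the relevant values of $k$ appearing in the definition of $c_d$.

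Once the key lemma is established, the well-definedness of $\on{Cal}$ follows, and the homomorphism property is immediate from the additivity of the integral under concatenation of Hamiltonian paths together with the fact that concatenation of Hameo presentations represents the group law on $\on{Hameo}$. Restriction to $\on{Ham}(\Sigma,\omega)$ reproduces the classical Calabi invariant, completing the extension.
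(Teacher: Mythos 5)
Your framework is sound in outline: extend the Hamiltonian to a closed surface, use $C^0$-Lipschitz continuity to extend the spectral invariants to continuous Hamiltonians, define $\on{Cal}(\phi)$ as the renormalized $d\to\infty$ limit, and reduce well-definedness to a ``key lemma'' that a Hameo presentation of the identity has vanishing Calabi. The first several steps of this are essentially what the paper does, except that the paper packages the construction into invariants $\gamma_d : \on{Ham}(\Sigma,\omega)\to\BR$ on the diffeomorphism group rather than working purely at the Hamiltonian level. The interchange of limits in $j$ and $d$ is also correct for the reason you give, although the uniformity one actually wants comes from the \emph{Hofer} Lipschitz bound on $\gamma_d/d$ (constant $1$, applied to the Hofer-Cauchy sequence $H_j$), not from the $C^0$ Lipschitz bound on $c_d$, since the hypothesis only gives Hofer convergence of the $H_j$.

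The genuine gap is your proposed proof of the key lemma. You write that ``holomorphic curves in the class $c[\ol{\Sigma}\times *]+d[*\times S^2]$ can be localized near the graph of $\ol{L}$ when this graph is $C^0$-close to the zero section, producing small symplectic area.'' But nothing in the hypothesis makes the graph of $\ol{L}$ close to the zero section. The Hameo hypothesis gives $\phi_{L_j}^1\to\on{id}$ in $C^0$ and $\|L_j\|$ bounded in the Hofer norm; the functions $L_j$ themselves can be enormous in $C^0$ norm (e.g.\ slow, localized twist Hamiltonians --- exactly the phenomenon underlying the simplicity conjecture). Since $\ol{L}$ need not be $C^0$-small, the cobordism $M_{\ol{L},0}$ need not be thin, and the closed-curve upper bound (Theorem~\ref{theorem:spectral_invariants_properties_basic}(9)) only returns a \emph{topological} quantity $\langle Z,[\Omega]\rangle$ that does not shrink as $d_{C^0}(\phi_L^1,\on{id})\to 0$. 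So there is no mechanism in your sketch that makes $c_d^{\on{id}}(\ol{L},0)/d$ small. The quantitative estimate you state --- $c_d^{\on{id}}(\ol{L},0)\leq d\,\varepsilon(\delta)$ uniformly in $d$ for $L$ of bounded Hofer norm with $d_{C^0}(\phi_L^1,\on{id})\leq\delta$ --- is in fact stronger than what the paper proves, and no argument for it is supplied.

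The paper's actual route is orthogonal to curve-localization. It proves $C^0$-continuity of $\gamma_d$ for each fixed $d$ (Theorem~\ref{theorem:spectral_invariants_properties_diffeomorphisms_nonempty_boundary}(6)) via two ingredients: (i) the \emph{support control} property, $\gamma_d(\phi\circ\psi)=\gamma_d(\phi)$ whenever $\psi$ is supported in an open set $U$ whose first $d$ iterates under $\phi$ are disjoint --- this follows from spectrality plus the observation that the relative action spectrum $\on{Spec}_d$ is unchanged along the homotopy, and continuity forces $c_d$ to be locally constant along it; and (ii) the \emph{fragmentation lemma} (Lemma~\ref{lem:fragmentation}) that any $\phi$ $C^0$-close to the identity is Hofer-close to a diffeomorphism supported in a small disk. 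Combining (i), (ii), and Hofer Lipschitz continuity gives $C^0$-continuity of $\gamma_d$, from which the key lemma and hence well-definedness follow by the interchange-of-limits computation. If you wish to reprove the theorem, these are the tools you need in place of the localization heuristic.
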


This result was proved in \cite{CHMSS21} using quantitative Heegaard Floer homology. We reprove it using our elementary spectral invariants. Again, the Weyl law for the spectral invariants $c_d$ plays a fundamental role.\\

\noindent{\bf Organization.} The rest of the paper is structured as follows:\\
\indent In \S\ref{sec:elemantary_construction_of_spectral_invariants} we give a detailed construction of our elementary spectral invariants and prove their basic properties summarized in Theorems \ref{theorem:spectral_invariants_properties_basic} and \ref{theorem:spectral_invariants_properties_weyl}. In \S\ref{subsec:almost_complex_structures} we define the class of almost complex structures used in our construction. Section \S\ref{subsec:pseudo_holomorphic_curves_and_energy} introduces the relevant notion of energy of pseudo-holomorphic curves. The definition of $c_{d,k}$ along with some elementary properties is given in \S\ref{subsec:definition_of_c_dk}. The neck stretching arguments used in order to prove monotonicity, the packing property and the closed curve upper bound are explained in \S\ref{subsec:neckstretching}. The proofs of Theorems \ref{theorem:spectral_invariants_properties_basic} and \ref{theorem:spectral_invariants_properties_weyl} are completed in \S\ref{subsec:proofs_of_basic_properties}.\\
\indent Section \S\ref{sec:applications} is devoted to applications of our elementary spectral invariants. In \S\ref{subsec:quantitative_closing_lemma} we treat quantitative closing lemmas. A proof of the simplicity conjecture and the extension of the Calabi homomorphism are the subject of \S\ref{subsec:simplicity_conjecture_and_calabi}.\\
\indent In the appendix we prove some lemmas concerning the relative action spectrum. Moreover, we provide some computations of certain Gromov-Taubes invariants, both with and without the use of Seiberg-Witten theory.\\

\noindent{\bf Acknowledgments.} We thank Michael Hutchings for suggesting this project and for helpful discussions. We acknowledge support from a UC Berkeley Department of Mathematics Spring Fellowship.

\section{Elementary construction of spectral invariants}
\label{sec:elemantary_construction_of_spectral_invariants}

The goal of this section is to construct the spectral invariants $c_{d,k}$ and $c_d$ and prove the properties summarized in Theorems \ref{theorem:spectral_invariants_properties_basic} and \ref{theorem:spectral_invariants_properties_weyl}.

\subsection{Almost complex structures}
\label{subsec:almost_complex_structures}

Fix an area-preserving diffeomorphism $\phi$ of $(\Sigma,\omega)$. Our spectral invariants are based on pseudo-holomorphic curves in the symplectization $(M_\phi,\Omega_\phi)$ defined in \eqref{eq:symplectization_definition}. In this subsection, we specify the almost complex structures on $(M_\phi,\Omega_\phi)$ relevant to our construction.

\begin{definition}
\label{def:symplectization_admissible_almost_complex_structure}
We call an almost complex structure $J$ on $M_\phi$ {\it symplectization admissible} if it satisfies the following properties:
\begin{enumerate}
\item $J$ is compatible with $\Omega_\phi$.
\item $J$ preserves the vertical tangent bundle of $M_\phi$, i.e.\ the subbundle of $TM_\phi$ consisting of all vectors tangent to the fibres of the projection $M_\phi\rightarrow \BR\times\BR/\BZ$.
\item $J\partial_s = \partial_t$
\item $J$ is invariant under translations along the symplectization direction $\partial_s$.
\end{enumerate}
\end{definition}

\begin{definition}
\label{def:adapted_almost_complex_structure}
Let $J$ be an almost complex structure defined on some subset of $M_\phi$ and let $H\in C^\infty(Y_\phi)$ be a Hamiltonian. We call $J$ {\it adapted} to $H$ if the pull-back $f_H^*J$ under the symplectomorphism $f_H:M_{\phi_H}\rightarrow M_\phi$ defined in \eqref{eq:diffeomorphism_between_symplectizations} is the restriction of a symplectization admissible almost complex structure on $M_{\phi_H}$.
\end{definition}

\begin{definition}
\label{def:almost_complex_structures}
Given Hamiltonians $H_\pm\in C^\infty(Y_\phi)$, we define $\MJ(H_+,H_-)$ to be the set of all almost complex structures on $M_\phi$ which are compatible with $\Omega_\phi$ and adapted to $H_+$ and $H_-$ on the subsets $[s,\infty)\times Y_\phi$ and $(-\infty,-s]\times Y_\phi$, respectively, for $s\gg 0$ sufficiently large. Here $s$ is allowed to depend on $J$.
\end{definition}

Suppose now that $H_\pm$ are Hamiltonians satisfying $H_+>H_-$. Recall that in this situation $M_{H_+,H_-}$, i.e.\ the closure of the region in $M_\phi$ between the graphs $\on{gr}(H_\pm)$, can be regarded as a symplectic cobordism between the mapping tori $Y_{\phi_{H_\pm}}$. Moreover, recall that $\on{gr}^\pm(H_\pm)$ can be viewed as a positive/negative cylindrical end attached to the cobordism $M_{H_+,H_-}$. Let $J\in\MJ(H_+,H_-)$. It is a direct consequence of Definition \ref{def:almost_complex_structures} that the restriction of $J$ to each of the cylindrical ends is symplectization admissible outside some compact subset. For technical reasons, it is important to not require $J$ to be symplectization admissible on the entire cylindrical end. Let us stress that the definition of $\MJ(H_+,H_-)$ also makes sense if $H_+>H_-$ does not hold and the cobordism $M_{H_+,H_-}$ is not defined. Moreover, we observe that $\MJ(H_+,H_-)$ does not change if we shift the Hamiltonians $H_\pm$ by adding constants.

\subsection{Pseudo-holomorphic curves and energy}
\label{subsec:pseudo_holomorphic_curves_and_energy}

Let us call a Hamiltonian $H\in C^\infty(Y_\phi)$ {\it non-degenerate} if all periodic orbits of $\phi_H$ are non-degenerate. The set of all non-degenerate Hamiltonians is residual in $C^\infty(Y_\phi)$ with respect to the $C^\infty$ topology. Suppose that $H_\pm$ are non-degenerate Hamiltonians and let $J\in\MJ(H_+,H_-)$. Our definition of spectral numbers is based on $J$-holomorphic maps $u:(S,j)\rightarrow (M_\phi,J)$. Here $(S,j)$ is a possibly disconnected Riemann surface obtained from a compact Riemann surface by removing finitely many punctures. We require that at every puncture of $S$ either $f_{H_+}^{-1}\circ u$ is positively asymptotic to the cylinder $\BR\times\gamma_+$ for a periodic orbit $\gamma_+$ of $Y_{\phi_{H_+}}$ or $f_{H_-}^{-1}\circ u$ is negatively asymptotic to the cylinder $\BR\times \gamma_-$ for a periodic orbit $\gamma_-$ of $Y_{\phi_{H_-}}$. Moreover, we require that $u$ is non-constant on all connected components of $S$. We let $\MM^J(H_+,H_-)$ denote the space of all such $J$-holomorphic maps modulo reparametrization of the domain. If $H_+>H_-$, then the symplectic cobordism $M_{H_+,H_-}$ is defined and $\MM^J(H_+,H_-)$ is simply the set of $J$-holomorphic curves in the completion of $M_{H_+,H_-}$ positively and negatively asymptotic to periodic orbits. However, the set $\MM^J(H_+,H_-)$ is defined for any pair of (non-degenerate) Hamiltonians. Moreover, it does not change if we shift the Hamiltonians by adding constants. Given $u\in\MM^J(H_+,H_-)$, we define its {\it degree} to be the intersection number with a fibre of the projection $M_\phi\rightarrow \BR\times\BR/\BZ$. For every positive integer $d\geq 1$, we define $\MM^J(H_+,H_-,d)$ to be the subset consisting of all curves of degree $d$. If $P\subset M_\phi$ is a finite subset, we further define $\MM^J(H_+,H_-,d,P)$ to be the subset of curves containing $P$ in their image.\\
Next, let us define the {\it energy} $\ME_{H_+,H_-}(u)$ of a $J$-holomorphic curve $u\in\MM^J(H_+,H_-,d)$. This energy takes values in the relative action spectrum $\on{Spec}_d(H_+,H_-)$. Let $\alpha_\pm$ be the orbit sets of $Y_{\phi_{H_\pm}}$ such that $u$ is positively asymptotic to $\alpha_+$ and negatively asymptotic to $\alpha_-$. As in the definition of the relative action spectrum (Definition \ref{def:relative_action_spectrum}), we can view $\alpha_\pm$ as $1$-chains in $M_\phi$ via the identification of $Y_{\phi_{H_\pm}}$ with $\on{gr}(H_\pm)$. The curve $u$ induces a relative homology class $[u]\in H_2(M_\phi,\alpha_+,\alpha_-)$ as follows. We form a $2$-chain in the compactification $\ol{M_\phi}=(\BR\cup\{\pm\infty\})\times Y_\phi$ by concatenating the cylinders $f_{H_+}([0,+\infty]\times\alpha_+)$ and $f_{H_-}([-\infty,0]\times\alpha_-)$ with the pseudo-holomorphic curve $u$. The boundary of this $2$-chain is given by $\alpha_+-\alpha_-$ and we obtain a relative homology class $[u]$ in $H_2(\ol{M_\phi},\alpha_+,\alpha_-)\cong H_2(M_\phi,\alpha_+,\alpha_-)$.

\begin{definition}
\label{def:engergy}
We define the {\it energy} of a pseudo-holomorphic curve $u\in\MM^J(H_+,H_-,d)$ to be
\begin{equation}
\ME_{H_+,H_-}(u)\coloneqq \langle [u],\Omega_\phi \rangle.
\end{equation}
\end{definition}

Clearly, $\ME_{H_+,H_-}(u)\in \on{Spec}_d(H_+,H_-)$. While the set of almost complex structures $\MJ(H_+,H_-)$ and the set of $J$-curves $\MM^J(H_+,H_-)$ only depend on the Hamiltonians $H_\pm$ up to additive constants, the relative action spectrum $\on{Spec}_d(H_+,H_-)$ and energy $\ME_{H_+,H_-}$ behave as follows under shifting the Hamiltonians.

\begin{lem}
\label{lem:shift_spectrum_energy}
Let $H_\pm \in C^\infty(Y_\phi)$ be Hamiltonians and let $C_\pm\in\BR$ be real numbers. We define Hamiltonians $H_\pm'\coloneqq C_\pm+H_\pm$. Then
\begin{equation}
\label{eq:shift_spectrum_energy_a}
\on{Spec}_d(H_+',H_-') = \on{Spec}_d(H_+,H_-) + d(C_+-C_-).
\end{equation}
If $H_\pm$ are non-degenerate and $u\in\MM^J(H_+,H_-,d)=\MM^J(H_+',H_-',d)$ is a $J$-curve of degree $d$, then
\begin{equation}
\label{eq:shift_spectrum_energy_b}
\ME_{H_+',H_-'}(u) = \ME_{H_+,H_-}(u) + d(C_+-C_-).
\end{equation}
\end{lem}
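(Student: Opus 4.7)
The plan is to exploit two elementary observations. First, since $d(H_\pm + C_\pm) = dH_\pm$, the Hamiltonian vector fields agree, so $\phi_{H_\pm'}^t = \phi_{H_\pm}^t$ and in particular $Y_{\phi_{H_\pm'}} = Y_{\phi_{H_\pm}}$ together with all their periodic orbits. Second, a direct inspection of formula \eqref{eq:diffeomorphism_between_symplectizations} yields the rigid-translation identity $f_{H_\pm'}(s,p) = f_{H_\pm}(s+C_\pm,p)$, so each graph $\on{gr}(H_\pm')$ is obtained from $\on{gr}(H_\pm)$ by sliding along $\partial_s$ by the constant $C_\pm$.

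For part (a), I fix degree $d$ orbit sets $\alpha_\pm$ of $Y_{\phi_{H_\pm}}=Y_{\phi_{H_\pm'}}$ and write $\tilde\alpha_\pm, \tilde\alpha_\pm'$ for the $1$-cycles they represent in $M_\phi$ via $f_{H_\pm}$ and $f_{H_\pm'}$, respectively. Define the finite cylinders $T_\pm \coloneqq f_{H_\pm}([0,C_\pm]\times\alpha_\pm)$, which satisfy $\partial T_\pm = \tilde\alpha_\pm' - \tilde\alpha_\pm$. Using $f_{H_\pm}^*\Omega_\phi = ds\wedge dt + \omega_{\phi_{H_\pm}}$ together with the fact that $\alpha_\pm$ is tangent to $\partial_t$ while $\omega_{\phi_{H_\pm}}$ has no $ds$-component and vanishes on $\partial_t$, one obtains
\begin{equation*}
\langle T_\pm, \Omega_\phi\rangle = \int_{[0,C_\pm]\times\alpha_\pm} ds\wedge dt = C_\pm \cdot d.
\end{equation*}
Any $2$-chain $Z$ with $\partial Z = \tilde\alpha_+ - \tilde\alpha_-$ then produces a chain $Z' \coloneqq Z + T_+ - T_-$ with $\partial Z' = \tilde\alpha_+' - \tilde\alpha_-'$ and $\langle Z', \Omega_\phi\rangle = \langle Z, \Omega_\phi\rangle + d(C_+ - C_-)$. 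This yields one inclusion in \eqref{eq:shift_spectrum_energy_a}; the reverse inclusion follows by symmetry, swapping the roles of $H_\pm$ and $H_\pm'$.

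For part (b), the same holomorphic map $u$ represents classes $[u]_{H_+,H_-}$ and $[u]_{H_+',H_-'}$ in different relative homology groups, obtained by concatenating $u$ with the half-infinite cylinders $f_{H_\pm}([0,\pm\infty]\times\alpha_\pm)$ and $f_{H_\pm'}([0,\pm\infty]\times\alpha_\pm)$, respectively. The identity $f_{H_\pm'}(s,p)=f_{H_\pm}(s+C_\pm,p)$ implies $f_{H_\pm'}([0,\pm\infty]\times\alpha_\pm) = f_{H_\pm}([C_\pm,\pm\infty]\times\alpha_\pm)$, so the two half-cylinders at each end differ precisely by the finite cylinder $T_\pm$. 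A short orientation check, using that both concatenations have relative-homology boundary equal to the difference of their respective asymptotic $1$-cycles, shows that $[u]_{H_+',H_-'} - [u]_{H_+,H_-} = T_+ - T_-$, and pairing with $\Omega_\phi$ now yields \eqref{eq:shift_spectrum_energy_b}.

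The only real subtlety is the orientation bookkeeping for the boundary cycles at $\pm\infty$, which has to be tracked carefully to produce the correct signs on $T_+$ and $T_-$. The underlying geometric content — that adding a constant to a Hamiltonian simply translates its graph, along with any $1$-cycle lying on it, in the symplectization direction — is entirely elementary, and no compactness or analytic input is required.
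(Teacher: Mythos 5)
Your proof is correct and takes essentially the same approach as the paper: both construct the finite cylinders of height $C_\pm$ (your $T_\pm$, the paper's $Z_\pm$), compute their $\Omega_\phi$-area to be $dC_\pm$, and use the bijection $Z\mapsto Z+T_+-T_-$ between the relevant relative homology groups to deduce both shift formulas. Your version simply spells out the translation identity $f_{H'_\pm}(s,p)=f_{H_\pm}(s+C_\pm,p)$ and the cancellation of half-infinite cylinders in more detail than the paper does.
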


\begin{remark}
The energy of a $J$-curve can be negative, which is immediate from \eqref{eq:shift_spectrum_energy_b}.
\end{remark}

\begin{proof}[Proof of Lemma \ref{lem:shift_spectrum_energy}]
Let $\alpha_\pm$ be orbit sets in  the mapping tori $Y_{\phi_{H_\pm}} = Y_{\phi_{H_\pm'}}$. We may view $\alpha_\pm$ as $1$-cycles inside $\on{gr}(H_\pm)$ via the identification $Y_{\phi_{H_\pm}}\cong \on{gr}(H_\pm)$ or as $1$-cycles inside $\on{gr}(H_\pm')$ via $Y_{\phi_{H_\pm'}}\cong \on{gr}(H_\pm')$. We use the notation $\alpha_\pm$ to refer to the former $1$-cycles and $\alpha_\pm'$ to refer to the latter. Let $Z_\pm\in H_2(M_\phi,\alpha_\pm',\alpha_\pm)$ denote the relative homology classes represented by the union of cylinders of height $C_\pm$ connecting the periodic orbits contained in $\alpha_\pm'$ and $\alpha_\pm$. We have a bijection between relative homologies given by
\begin{equation}
H_2(M_\phi,\alpha_+,\alpha_-)\longrightarrow H_2(M_\phi,\alpha_+',\alpha_-')\qquad Z\mapsto Z + Z_+ - Z_-.
\end{equation}
Equations \eqref{eq:shift_spectrum_energy_a} and \eqref{eq:shift_spectrum_energy_b} follow from the observation that the $\Omega_\phi$-areas of $Z_\pm$ are given by $\langle Z_\pm,\Omega_\phi\rangle = dC_\pm$.
\end{proof}

\begin{lem}
\label{lem:lower_energy_bound}
For every tuple of non-degenerate Hamiltonians $H_\pm$, every $J\in\MJ(H_+,H_-)$ and every positive integer $d\geq 1$, there exists a constant $C\in\BR$ such that $\ME_{H_+,H_-}(u) \geq C$ for all $u\in\MM^J(H_+,H_-,d)$.
\end{lem}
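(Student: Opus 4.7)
The plan is to reduce to the case $H_+ > H_-$ via a shift of the Hamiltonians and then exploit non-negativity of the energy decomposition in the cobordism case. Specifically, I would choose $C > 0$ large enough so that $H_+' := H_+ + C > H_-$ pointwise on $Y_\phi$ (so that $M_{H_+', H_-}$ is a genuine symplectic cobordism) and, simultaneously, the ACS $J$ is adapted to $H_+'$ on the entire positive epigraph $\on{gr}^+(H_+')$, not merely outside a compact set. The latter can be arranged because $f_{H_+'} = f_{H_+} \circ T_C$, where $T_C$ is translation by $C$ in the $s$-direction; the symplectization-admissible extension of $f_{H_+}^* J$ is $\BR$-invariant, so pulling back by $T_C$ shifts the admissibility region by $C$ in $s$ and for $C$ sufficiently large absorbs all of $[0, \infty) \times Y_{\phi_{H_+'}}$. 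Since adding constants leaves both $\MJ$ and $\MM^J$ unchanged, $J \in \MJ(H_+', H_-)$ and $\MM^J(H_+', H_-, d) = \MM^J(H_+, H_-, d)$.

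For the shifted setup I would verify the three-term decomposition
\[
\ME_{H_+', H_-}(u) = \int_{u \cap M_{H_+', H_-}} \Omega_\phi + \int_{f_{H_+'}^{-1}(u \cap \on{gr}^+(H_+'))} \omega_{\phi_{H_+'}} + \int_{f_{H_-}^{-1}(u \cap \on{gr}^-(H_-))} \omega_{\phi_{H_-}}
\]
from the introduction. This identity follows by a Stokes computation using a 2-chain representative of $[u]$ assembled from the cobordism piece together with cylindrical caps in the ends; the caps contribute nothing to the end integrals because $\omega_{\phi_{H_\pm}}$ vanishes on cylinders over periodic orbits. Each term is non-negative: the cobordism term by $\Omega_\phi$-compatibility of $J$; each end term because the full symplectization-admissibility of the pulled-back ACS $J' := f_{H_\pm'}^*J$ forces $\omega_{\phi_{H_\pm}}(w, J'w) \geq 0$ pointwise (decompose $w$ into horizontal and vertical components using $J'\partial_s = \partial_t$ and the taming of $\omega_{\phi_{H_\pm}}$ by $J'$ on the vertical bundle). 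Consequently $\ME_{H_+', H_-}(u) \geq 0$ for every $u \in \MM^J(H_+', H_-, d)$.

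Applying Lemma \ref{lem:shift_spectrum_energy} yields $\ME_{H_+', H_-}(u) = \ME_{H_+, H_-}(u) + dC$, whence $\ME_{H_+, H_-}(u) \geq -dC$, giving the desired uniform lower bound $C_0 := -dC$. The main obstacle is the rigorous verification that the three-term decomposition agrees with the homological pairing $\langle [u], \Omega_\phi \rangle$; this is a Stokes-type identity requiring careful attention to the explicit 2-chain representative and orientations, but is a standard manipulation once one confirms that enlarging the adapted region via the shift argument does not disturb the $\Omega_\phi$-compatibility of $J$ on the remaining bounded part of $M_\phi$.
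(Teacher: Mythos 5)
Your approach is essentially the paper's: shift the Hamiltonians so that $J$ becomes symplectization admissible on the entirety of both cylindrical ends, observe that the three-term energy decomposition is then a sum of non-negative terms, and pass back through Lemma \ref{lem:shift_spectrum_energy}. However, there is a gap: you only shift $H_+$. Adding $C$ to $H_+$ pushes $\on{gr}^+(H_+')$ into the region $[s_0,\infty)\times Y_\phi$ on which $J$ is adapted to $H_+$, but it does nothing for the negative end. The subgraph $\on{gr}^-(H_-)$ may contain a compact region lying in $(-s_0,\infty)\times Y_\phi$, where $J$ is only assumed $\Omega_\phi$-compatible, not adapted to $H_-$. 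On that region $f_{H_-}^*J$ need not preserve the vertical bundle or send $\partial_s\mapsto\partial_t$, so the pointwise taming argument $\omega_{\phi_{H_-}}(w,f_{H_-}^*Jw)\geq 0$ you invoke has no basis there, and the third integral in your decomposition can a priori be negative. Your phrase ``the full symplectization-admissibility of the pulled-back ACS $J'=f_{H_\pm'}^*J$'' is true for the $+$ side but unjustified for the $-$ side, since you never modified $H_-$.

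The fix is small and symmetric: also replace $H_-$ by $H_-' := H_- - C_-$ for $C_->0$ large enough that $\on{gr}^-(H_-')\subset(-\infty,-s_0]\times Y_\phi$. Then the paper's three-term decomposition applies to $(H_+',H_-')$ with all three terms non-negative, and Lemma \ref{lem:shift_spectrum_energy} gives $\ME_{H_+,H_-}(u) = \ME_{H_+',H_-'}(u) - d(C_+ + C_-) \geq -d(C_+ + C_-)$, which is the uniform lower bound. Everything else in your write-up — the translation argument $f_{H_+'} = f_{H_+}\circ T_{C_+}$, the observation that $\MJ$ and $\MM^J$ are shift-invariant, and the Stokes-type verification that the three-term sum equals $\langle[u],\Omega_\phi\rangle$ — is correct and matches the structure of the paper's argument.
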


\begin{proof}
By Lemma \ref{lem:shift_spectrum_energy} it suffices to prove the assertion for suitable shifts of the Hamiltonians $H_\pm$. We may therefore assume w.l.o.g. that $H_+>H_-$ and that the almost complex structure is symplectization admissible on the entirety of the cylindrical ends $\on{gr}^\pm(H_\pm)$. We claim that under this assumption the energy $\ME_{H_+,H_-}(u)$ is non-negative for every pseudo-holomorphic curve $u\in\MM^J(H_+,H_-,d)$. In order to see this, we split $u$ into three pieces:
\begin{equation}
\label{eq:lower_energy_bound_proof_a}
u_\pm \coloneqq u \cap \on{gr}^\pm(H_\pm) \qquad \text{and} \qquad u_0 \coloneqq u \cap M_{H_+,H_-}
\end{equation}
The energy $\ME_{H_+,H_-}(u)$ can be expressed as
\begin{equation}
\label{eq:lower_energy_bound_proof_b}
\ME_{H_+,H_-}(u) \coloneqq \int_{f_{H_+}^{-1}u_+} \omega_{\phi_{H_+}} + \int_{u_0}\Omega_\phi + \int_{f_{H_-}^{-1}u_-} \omega_{\phi_{H_-}}.
\end{equation}
The second summand is non-negative because $J$ is compatible with $\Omega_\phi$. The first and third summands are non-negative because $J$ is symplectization admissible on the cylindrical ends $\on{gr}^\pm(H_\pm)$.
\end{proof}

\subsection{Definition of $c_{d,k}$}
\label{subsec:definition_of_c_dk}

We are now ready to define the spectral invariants $c_{d,k}$. At first, we only define them for non-degenerate Hamiltonians $H_\pm$. After proving some of their basic properties, in particular Lipschitz continuity, we will extend them to arbitrary Hamiltonians.

\begin{definition}
\label{def:spectral_invariants_basic}
For $d \geq 1$ and $k \geq 0$ and non-degenerate Hamiltonians $H_\pm$, we define
\begin{equation}
c_{d,k}(H_+,H_-) \coloneqq \sup\limits_{\substack{J\in\MJ(H_+,H_-)\\ |P|=k}} \enspace \inf\limits_{u\in \MM^J(H_+,H_-,d,P)} \enspace\ME_{H_+,H_-}(u).
\end{equation}
\end{definition}
By Lemma \ref{lem:lower_energy_bound}, the infimum of $\ME_{H_+,H_-}(u)$ over all $u\in \MM^J(H_+,H_-,d,P)$ takes values in $\BR\cup\{+\infty\}$. Thus $c_{d,k}(H_+,H_-)$ takes values in $\BR\cup\{+\infty\}$ as well. The following basic property of $c_{d,k}$ is an immediate consequence of Lemma \ref{lem:shift_spectrum_energy}.

\begin{lem}[Shift]
\label{lem:shift_spectral_number}
Let $H_\pm \in C^\infty(Y_\phi)$ be non-degenerate Hamiltonians and let $C_\pm\in\BR$ be real numbers. We define Hamiltonians $H_\pm'\coloneqq C_\pm+H_\pm$. For every $d\geq 1$ and $k\geq 0$, we have
\begin{equation}
c_{d,k}(H_+',H_-') = c_{d,k}(H_+,H_-) + d(C_+-C_-).
\end{equation}
\end{lem}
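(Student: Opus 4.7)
The plan is to reduce the claim directly to Lemma \ref{lem:shift_spectrum_energy}, by observing that shifting the Hamiltonians by constants does not alter any of the ingredients in Definition \ref{def:spectral_invariants_basic} except for the energy functional, and the effect on the energy is a uniform additive shift that commutes with the sup--inf.

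More precisely, first I would note the two invariance observations already recorded in the paper: immediately after Definition \ref{def:almost_complex_structures} it is pointed out that $\MJ(H_+,H_-)$ is unchanged when $H_\pm$ is replaced by $H_\pm + C_\pm$, and in \S\ref{subsec:pseudo_holomorphic_curves_and_energy} it is noted that $\MM^J(H_+,H_-,d)$ (hence also $\MM^J(H_+,H_-,d,P)$ for any finite set $P$) shares the same invariance. Consequently,
\begin{equation*}
\MJ(H_+',H_-') = \MJ(H_+,H_-), \qquad \MM^J(H_+',H_-',d,P) = \MM^J(H_+,H_-,d,P),
\end{equation*}
so the supremum and infimum in Definition \ref{def:spectral_invariants_basic} are taken over exactly the same sets for the two pairs of Hamiltonians.

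Next, I would invoke equation \eqref{eq:shift_spectrum_energy_b} of Lemma \ref{lem:shift_spectrum_energy}, which asserts that for every curve $u$ in the common moduli space $\MM^J(H_+,H_-,d,P)$ one has
\begin{equation*}
\ME_{H_+',H_-'}(u) = \ME_{H_+,H_-}(u) + d(C_+ - C_-).
\end{equation*}
Since the additive term $d(C_+-C_-)$ is independent of $u$, $J$ and $P$, it factors out of both the infimum over $u$ and the supremum over $(J,P)$. Combining these observations yields
\begin{equation*}
c_{d,k}(H_+',H_-') = \sup_{J,P}\, \inf_u\, \bigl(\ME_{H_+,H_-}(u) + d(C_+-C_-)\bigr) = c_{d,k}(H_+,H_-) + d(C_+-C_-),
\end{equation*}
which is the claim. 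There is no real obstacle: the argument is purely bookkeeping, and the only substantive input is the relative homology computation behind \eqref{eq:shift_spectrum_energy_b}, which has already been carried out in the proof of Lemma \ref{lem:shift_spectrum_energy}.
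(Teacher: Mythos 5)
Your argument is exactly the paper's: the paper also observes that neither $\MJ(H_+,H_-)$ nor $\MM^J(H_+,H_-,d,P)$ change under constant shifts and then invokes Lemma \ref{lem:shift_spectrum_energy} to shift the energy by $d(C_+-C_-)$, which commutes with the sup--inf. Your write-up is correct and merely spells out the bookkeeping that the paper leaves implicit.
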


\begin{proof}
This is an immediate consequence of Lemma \ref{lem:shift_spectrum_energy} since neither $\MJ(H_+,H_-)$ nor the moduli space $\MM^J(H_+,H_-,d,P)$ are affected by shifting the Hamiltonians.
\end{proof}

\begin{lem}[Non-negativity]
\label{lem:non_negativity}
Let $H_\pm$ be non-degenerate Hamiltonians and assume that $H_+\geq H_-$. Then $c_{d,k}(H_+,H_-)$ is non-negative for all $d\geq 1$ and all $k\geq 0$.
\end{lem}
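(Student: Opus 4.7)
The plan is to first establish the statement under the strict inequality $H_+ > H_-$ and then relax to $H_+ \geq H_-$ via the shift property (Lemma \ref{lem:shift_spectral_number}). The preliminary observation is that adding a constant to a Hamiltonian does not alter the induced vector field (since $d(H+C) = dH$) and hence does not alter the time-one map; in particular, shifting by constants preserves non-degeneracy.

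In the strict case $H_+ > H_-$, I would fix an arbitrary $k$-point set $P \subset M_\phi$ and choose $J \in \mathcal{J}(H_+, H_-)$ which, identified via $f_{H_\pm}$, is symplectization admissible on the entirety of both cylindrical ends $\on{gr}^\pm(H_\pm)$ (not merely outside a compact subset). Such $J$ exist because the graphs $\on{gr}(H_\pm)$ are disjoint. For this choice, the decomposition argument from the proof of Lemma \ref{lem:lower_energy_bound} applies directly: any $u \in \mathcal{M}^J(H_+, H_-, d, P)$ splits as $u_+ \cup u_0 \cup u_-$ with $u_\pm \subset \on{gr}^\pm(H_\pm)$ and $u_0 \subset M_{H_+, H_-}$, and each of the three summands in the resulting expression for $\mathcal{E}_{H_+, H_-}(u)$ is non-negative --- the middle one by $\Omega_\phi$-compatibility of $J$, the outer two by symplectization admissibility. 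This gives $\inf_u \mathcal{E}_{H_+, H_-}(u) \geq 0$ for this particular $(J, P)$, hence $c_{d,k}(H_+, H_-) \geq 0$.

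For the general case $H_+ \geq H_-$, I would fix $\epsilon > 0$ and apply the strict case to the pair $(H_+ + \epsilon, H_-)$, which is non-degenerate and strictly ordered. This yields $c_{d,k}(H_+ + \epsilon, H_-) \geq 0$, and Lemma \ref{lem:shift_spectral_number} with $C_+ = \epsilon$ and $C_- = 0$ then gives $c_{d,k}(H_+, H_-) \geq -d\epsilon$. Sending $\epsilon \downarrow 0$ concludes the proof.

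The one place requiring a moment of care is the existence of an almost complex structure in $\mathcal{J}(H_+, H_-)$ which is symplectization admissible on all of both cylindrical ends; this follows immediately from Definition \ref{def:almost_complex_structures}, since such a $J$ trivially satisfies the weaker condition of being adapted on $[s,\infty) \times Y_\phi$ and $(-\infty,-s] \times Y_\phi$ for some large $s$. Beyond that, the argument is a direct repackaging of the proof of Lemma \ref{lem:lower_energy_bound} combined with the shift property, so I do not foresee a real obstacle.
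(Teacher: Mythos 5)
Your proposal is correct and matches the paper's proof essentially step for step: reduce to $H_+>H_-$ via the shift property, pick a $J$ that is adapted (symplectization admissible after pulling back by $f_{H_\pm}$) on the \emph{entire} cylindrical ends so that the three-piece decomposition of the energy from Lemma \ref{lem:lower_energy_bound} applies, and observe that one good pair $(J,P)$ suffices because $c_{d,k}$ is a supremum. The only cosmetic difference is that you carry the $\epsilon$-shift through explicitly while the paper states it as a "w.l.o.g." reduction.
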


\begin{proof}
By Lemma \ref{lem:shift_spectral_number}, we may shift $H_+$ by an arbitrarily small positive amount and assume w.l.o.g. that $H_+ > H_-$. Choose an $\Omega_\phi$-compatible almost complex structure $J$ on $M_\phi$ which is adapted to the Hamiltonians $H_\pm$ on the entirety of the cylindrical ends $\on{gr}^\pm(H_\pm)$. Moreover, choose a set $P\subset M_\phi$ of cardinality $k$. Since the definition of $c_{d,k}$ takes the supremum over all tuples $(J,P)$, if suffices to show that for this specific choice of $(J,P)$ the energy $\ME_{H_+,H_-}(u)$ is non-negative for all $u\in\MM^J(H_+,H_-,d,P)$. This is done exactly as in the proof of Lemma \ref{lem:lower_energy_bound}: We split $u$ into three pieces as in \eqref{eq:lower_energy_bound_proof_a} and observe that all three summands in \eqref{eq:lower_energy_bound_proof_b} must be non-negative.
\end{proof}

\subsection{Neck stretching}
\label{subsec:neckstretching}

In this subsection we prove that the spectral invariants $c_{d,k}$ satisfy monotonicity, the packing property and the closed curve upper bound in Theorem \ref{theorem:spectral_invariants_properties_basic}. This is done by adapting the neck stretching argument in \cite{Hut22} to our current setting.

\begin{lem}[Monotonicity]
\label{lem:monotonicity}
Let $d\geq 1$ be a positive integer. Let $k_\pm\geq 0$ be non-negative integers and set $k\coloneqq k_++k_-$. Let $H_\pm, H_0\in C^\infty(Y_\phi)$ be non-degenerate Hamiltonians. Then
\begin{equation}
\label{eq:monotonicity}
c_{d,k}(H_+,H_-)\geq c_{d,k_+}(H_+,H_0) + c_{d,k_-}(H_0,H_-).
\end{equation}
\end{lem}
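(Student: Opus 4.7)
The plan is to adapt Hutchings' neck stretching argument from \cite{Hut22} to the present setting, degenerating the symplectic manifold $M_\phi$ along the graph $\on{gr}(H_0)$ so that in the limit it splits into the completions of the two cobordisms $M_{H_+,H_0}$ and $M_{H_0,H_-}$ joined along a cylindrical end over $Y_{\phi_{H_0}}$. The right-hand side of \eqref{eq:monotonicity} is then forced to appear on the two pieces of the broken cobordism.

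Fix $\epsilon > 0$. By the definitions of $c_{d,k_+}(H_+,H_0)$ and $c_{d,k_-}(H_0,H_-)$, I choose $J_+ \in \MJ(H_+,H_0)$, $J_- \in \MJ(H_0,H_-)$ and point sets $P_\pm$ with $|P_\pm| = k_\pm$ such that
\[\inf_{u \in \MM^{J_+}(H_+,H_0,d,P_+)} \ME_{H_+,H_0}(u) \geq c_{d,k_+}(H_+,H_0) - \epsilon\]
and analogously for $(J_-, P_-)$ with respect to the cobordism $M_{H_0,H_-}$. I arrange $J_\pm$ to be symplectization admissible on neighborhoods of $\on{gr}(H_0)$ and $P_\pm$ to lie strictly inside $\on{gr}^\pm(H_0)$, bounded away from $\on{gr}(H_0)$. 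For each large $R > 0$, I splice in a neck of length $2R$ of symplectization admissible almost complex structure over $Y_{\phi_{H_0}}$ between $J_+$ and $J_-$ to obtain $J_R \in \MJ(H_+,H_-)$, and set $P \coloneqq P_+ \cup P_-$, which has cardinality $k$ and sits outside the neck once $R$ is large.

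To conclude \eqref{eq:monotonicity}, it suffices to show
\[\inf_{u \in \MM^{J_R}(H_+,H_-,d,P)} \ME_{H_+,H_-}(u) \geq c_{d,k_+}(H_+,H_0) + c_{d,k_-}(H_0,H_-) - 2\epsilon\]
for all sufficiently large $R$ and then let $\epsilon \to 0$. Assuming otherwise, I pick $R_n \to \infty$ and violating curves $u_n$ whose energies are uniformly bounded; following the template of \cite{Hut22} via Taubes' local Gromov compactness \cite{Tau98}, I extract a subsequential limit, namely a broken holomorphic building with a top level $u^+$ in the completion of $M_{H_+,H_0}$, possibly several symplectization levels over $Y_{\phi_{H_0}}$, and a bottom level $u^-$ in the completion of $M_{H_0,H_-}$. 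The limit energy equals the sum of the level energies, and the symplectization levels contribute non-negatively because the almost complex structure there is symplectization admissible. Positivity of intersection with the horizontal fibres, preserved under the stretching, forces every level to have degree exactly $d$, and since $P_+$ sits strictly above $\on{gr}(H_0)$ and $P_-$ strictly below, $u^+$ captures all of $P_+$ and $u^-$ all of $P_-$. Hence $u^+ \in \MM^{J_+}(H_+,H_0,d,P_+)$ and $u^- \in \MM^{J_-}(H_0,H_-,d,P_-)$, so their energies are bounded below by $c_{d,k_+}(H_+,H_0) - \epsilon$ and $c_{d,k_-}(H_0,H_-) - \epsilon$ respectively; summing contradicts the bound assumed on $\ME_{H_+,H_-}(u_n)$.

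The principal obstacle will be executing the compactness and bookkeeping cleanly: verifying that positivity of intersection genuinely pins the degree at $d$ on every level rather than leaking into auxiliary symplectization components, that no marked point escapes into the stretched neck, and that ghost components arising in the limit can be discarded without disturbing the membership of $u^\pm$ in the respective moduli spaces. Hutchings' use of Taubes' local compactness in \cite{Hut22} supplies a template that should adapt directly to this fibred setting and keep the argument elementary, avoiding the full SFT compactness machinery.
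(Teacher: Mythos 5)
Your proposal follows the same overall strategy as the paper: neck-stretch $M_\phi$ along $\on{gr}(H_0)$, use Taubes' local version of Gromov compactness from \cite{Tau98} rather than SFT compactness, and account for the degree and marked points to produce curves $u_\pm$ in the two sub-cobordisms. The logical packaging differs slightly — you fix near-optimal $(J_\pm,P_\pm)$ for the right-hand side and argue by contradiction, while the paper takes arbitrary $(J_\pm,P_\pm)$, picks a near-optimal stretched curve $u^R$ for the left-hand side, and directly exhibits limit curves $u_\pm$ before taking the supremum at the end. Both framings are sound and the technical core is identical.

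There is, however, a genuine gap. You describe the limit as a ``broken holomorphic building'' with levels, and then assert $u^\pm\in\MM^{J_\pm}$, but Taubes' local compactness produces a limit only as a \emph{current}. Without an a priori genus bound one cannot conclude that the pieces are punctured curves of finite genus, which is precisely what membership in $\MM^{J_\pm}$ requires; this is exactly why SFT compactness is unavailable here. Your list of ``principal obstacles'' (degree leakage, escaping marked points, ghost components) omits this, and it is the one that actually requires new input. The paper closes it as follows: replace each $u^{R_i}$ by the underlying simple curve $v^{R_i}$ (which does not increase energy and has degree at most $d$); since $H_\pm$ are non-degenerate there are only finitely many orbit sets of degree at most $d$, so after passing to a subsequence the $v^{R_i}$ share asymptotics and (by a Gromov-compactness argument as in \cite{Hut02}) a relative homology class; the relative adjunction formula and the asymptotic writhe bound of \cite{Hut09} then give a uniform lower bound on the Euler characteristic of the $v^{R_i}$, hence an upper bound on the genus of $u^{R_i}$, and the limits $u_\pm$ have finite genus. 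Without some version of this step the argument does not close. A secondary, easily-repaired omission: you tacitly assume the cobordisms $M_{H_+,H_0}$ and $M_{H_0,H_-}$ exist, i.e.\ $H_+>H_0>H_-$; the paper first invokes the shift property (Lemma \ref{lem:shift_spectral_number}) to reduce to this situation before stretching, and you should do the same.
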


\begin{proof}
By the shift property (Lemma \ref{lem:shift_spectral_number}), it suffices to show the inequality
\begin{equation}
c_{d,k}(H_+,H_-)\geq c_{d,k_+}(H_+-\delta,H_0+\delta) + c_{d,k_-}(H_0-\delta,H_-+\delta)
\end{equation}
for all $\delta>0$. Let us fix $\delta>0$ and abbreviate
\begin{equation}
H_+^-\coloneqq H_+-\delta\qquad H_0^+\coloneqq H_0+\delta \qquad H_0^- \coloneqq H_0 - \delta \qquad H_-^+\coloneqq H_-+\delta.
\end{equation}
Fix almost complex structures $J_+\in\MJ(H_+^-,H_0^+)$ and $J_-\in \MJ(H_0^-,H_-^+)$. Moreover, fix sets $P_\pm\subset M_\phi$ of cardinalities $k_\pm$. Let $\epsilon>0$ be arbitrary. Our goal is to construct pseudoholomorphic curves $u_+\in \MM^{J_+}(H_+^-,H_0^+,d,P_+)$ and $u_- \in \MM^{J_-}(H_0^-,H_-^+,d,P_-)$ such that
\begin{equation}
\label{eq:monotonicity_proof_a}
c_{d,k}(H_+,H_-) + \epsilon \geq \ME_{H_+^-,H_0^+}(u_+) + \ME_{H_0^-,H_-^+}(u_-).
\end{equation}
We closely follow the neck-stretching argument explained in \cite[Section 3]{Hut22} to find such curves $u_\pm$. Since $J_\pm$, $P_\pm$ and $\epsilon >0$ are arbitrary, inequality \eqref{eq:monotonicity_proof_a} implies the desired inequality \eqref{eq:monotonicity}.\\
We claim that we may assume w.l.o.g. that the following is true:
\begin{enumerate}
\item $H_+^->H_0^+$ and $H_0^->H_-^+$
\item $P_+$ is contained in the interior of $M_{H_+^-,H_0^+}$ and $P_-$ is contained in the interior of $M_{H_0^-,H_-^+}$.
\item $J_+$ is symplectization admissible on the entire cylindrical ends attached to $M_{H_+^-,H_0^+}$. Similarly, $J_-$ is symplectization admissible on the entire cylindrical ends attached to $M_{H_0^-,H_-^+}$.
\end{enumerate}
Indeed, by Lemmas \ref{lem:shift_spectrum_energy} and \ref{lem:shift_spectral_number}, shifting the Hamiltonians $H_\pm$ and $H_0$ changes both sides of \eqref{eq:monotonicity_proof_a} by the same amount. Moreover, if we shift $J_+$ and $P_+$ along the symplectization direction, the moduli space $\MM^{J_+}(H_+^-,H_0^+,d,P_+)$ also changes by a shift. The energy $\ME_{H_+^-,H_0^+}$ is not affected by shifting holomorphic curves. The analogous statement holds for $J_-$ and $P_-$. Properties (1)-(3) above can be achieved by suitable shifts of the Hamiltonians, the almost complex structures $J_\pm$ and the points $P_\pm$.\\
For every $R>0$, we may choose a compatible almost complex structure $J^R$ on $M_\phi$ and biholomorphisms
\begin{equation}
\psi_+^R : (M_{H_+^-+R,H_0^+-R},J_+) \rightarrow (M_{H_+^-+\delta/2,H_0^+-\delta/2},J^R)
\end{equation}
and
\begin{equation}
\psi_-^R : (M_{H_0^-+R,H_-^+-R},J_-) \rightarrow (M_{H_0^-+\delta/2,H_-^+-\delta/2},J^R)
\end{equation}
with the following properties:
\begin{enumerate}
\item $J^R$ is independent of $R$ on the complement of $M_{H_+^-+\delta/2,H_0^+-\delta/2}\cup M_{H_0^-+\delta/2,H_-^+-\delta/2}$
\item $J^R$ is adapted to $H_+$ and $H_-$ on the sets $\on{gr}^+(H_+)$ and $\on{gr}^-(H_-)$, respectively.
\item The restriction of $\psi_+^R$ to $M_{H_+^-,H_0^+}$ is the identity. Similarly, the restriction of $\psi_-^R$ to $M_{H_0^-,H_-^+}$ is the identity.
\item For every $s\in [0,R]$, the restriction of $\psi_+^R$ to $\on{gr}(H_+^-+s)$ or $\on{gr}(H_0^+-s)$ is a shift in the symplectization direction by some amount depending on $s$. An analogous statement holds for the restriction of $\psi_-^R$ to $\on{gr}(H_0^-+s)$ or $\on{gr}(H_-^+-s)$.
\end{enumerate}
By definition of $c_{d,k}$, we may choose a $J^R$-curve $u^R\in \MM^{J^R}(H_+,H_-,d,P_+\cup P_-)$ such that
\begin{equation}
\label{eq:monotonicity_proof_b}
c_{d,k}(H_+,H_-)+\epsilon > \ME_{H_+,H_-}(u^R).
\end{equation}
Since we do not have an apriori bound on the genus of the curves $u^R$, we cannot directly use SFT compactness (see \cite{BEHWZ03}). As explained in \cite{Hut22}, there is a version of Gromov compactness due to Taubes that does not require genus bounds (see \cite[Proposition 3.3]{Tau98}). It follows from this compactness result that there exists a sequence $R_i \rightarrow +\infty$ such that the curves $(\psi_\pm^{R_i})^{-1}\circ u^{R_i}$ converge to proper pseudo-holomorphic maps $u_\pm$ in $(M_\phi,J_\pm)$. The maps $u_\pm$ must pass through the points $P_\pm$ and their degree is equal to $d$. As a current, $f_{H_+^-}^{-1}\circ u_+$ is positively asymptotic to a degree $d$ orbit set of $Y_{\phi_{H_+^-}}$ and $f_{H_0^+}^{-1}\circ u_+$ is negatively asymptotic to a degree $d$ orbit set of $Y_{\phi_{H_0^+}}$. The analogous statement is true for $u_-$. It follows from \eqref{eq:monotonicity_proof_b} that $u_\pm$ satisfy \eqref{eq:monotonicity_proof_a}. It remains to argue that $u_\pm$ can be arranged to have finite genus.\\
The curves $u^{R_i}$ may have multiply covered components. Let $v^{R_i}$ denote the curve obtained from $u^{R_i}$ by replacing every multiply covered component with the underlying simple curve. Since this operation does not increase the energy $\ME_{H_+,H_-}$, we still have a uniform bound on $\ME_{H_+,H_-}(v^{R_i})$. Moreover, the degree of $v^{R_i}$ is at most $d$. Since $H_\pm$ are non-degenerate, there are only finitely many orbit sets in $Y_{\phi_{H_\pm}}$ of degree at most $d$. After possibly passing to a further subsequence, we may assume that there exist orbit sets $\alpha_\pm$ of degree at most $d$ such that all the curves $v^{R_i}$ are positively asymptotic to $\alpha_+$ and negatively asymptotic to $\alpha_-$. As explained in \cite[Section 9.4]{Hut02}, we can use Gromov compactness to pass to a subsequence such that all the curves $v^{R_i}$ represent the same relative homology class in $H_2(M_\phi,\alpha_+,\alpha_-)$. It follows from the relative adjunction formula and the asymptotic writhe bound (see \cite[Proposition 4.9 and Lemma 4.20]{Hut09}) that there is a uniform lower bound on the Euler characteristic of the curves $v^{R_i}$. This implies the desired uniform bound on the genus of the curves $u^{R_i}$. It follows that the limit curves $u_\pm$ have finite genus and are indeed contained in $\MM^{J_\pm}(H_+,H_-,d,P_\pm)$.
\end{proof}

\begin{lem}[Packing]
\label{lem:packing}
Let $d\geq 1$ be a positive integer and $k\geq 0$ a non-negative integer. Let $H_\pm\in C^\infty(Y_\phi)$ be non-degenerate Hamiltonians and assume that $H_+>H_-$. Let $X$ denote a compact Liouville domain which admits a symplectic embedding into the symplectic cobordism $M_{H_+,H_-}$. Then
\begin{equation}
\label{eq:packing}
c_{d,k}(H_+,H_-) \geq c_k^H(X).
\end{equation}
\end{lem}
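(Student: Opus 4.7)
The plan is to adapt the neck-stretching scheme from the proof of Lemma \ref{lem:monotonicity}, this time stretching along the contact boundary $\partial X$ of the embedded Liouville domain. Fix a symplectic embedding $X\hookrightarrow M_{H_+,H_-}$ and identify $X$ with its image. To prove \eqref{eq:packing} it suffices to show that for every $\delta>0$ we have $c_{d,k}(H_+,H_-)\geq c_k^H(X)-\delta$. By the definition of $c_k^H(X)$ as a supremum, I may fix a symplectization admissible almost complex structure $J_X$ on the completion $\ol{X}$ and a set $P\subset X$ of $k$ points such that every compact $J_X$-holomorphic curve $v$ in $\ol{X}$ through $P$, asymptotic to Reeb orbits of $(\partial X,\lambda)$ at its positive punctures, satisfies $\ME(v)\geq c_k^H(X)-\delta/2$.

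For each neck length $R>0$ I would construct an almost complex structure $J^R\in\MJ(H_+,H_-)$ that (i)~restricts to $J_X$ on $X$, (ii)~is $\BR$-invariant and agrees with the symplectization almost complex structure induced by $J_X|_{\partial X}$ on a collar $[-R,R]\times\partial X$ attached along $\partial X$ (symplectically identified with a piece of the symplectization of $(\partial X,\lambda)$), and (iii)~is independent of $R$ outside of this collar. By definition of $c_{d,k}(H_+,H_-)$ applied to the pair $(J^R,P)$, there exists $u^R\in\MM^{J^R}(H_+,H_-,d,P)$ with
\begin{equation*}
\ME_{H_+,H_-}(u^R)\leq c_{d,k}(H_+,H_-)+\delta/2.
\end{equation*}
After an initial shift of $H_\pm$ as in the first reduction in the proof of Lemma \ref{lem:monotonicity}, I may assume $J^R$ is symplectization admissible on the entirety of the cylindrical ends $\on{gr}^\pm(H_\pm)$. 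Decomposing $\ME_{H_+,H_-}(u^R)$ into contributions from $\on{gr}^+(H_+)$, $M_{H_+,H_-}\setminus(X\cup\text{neck})$, the neck $[-R,R]\times\partial X$, the interior of $X$, and $\on{gr}^-(H_-)$, each summand is non-negative because $J^R$ is either $\Omega_\phi$-compatible or symplectization admissible on the region in question. Hence the portion of $\ME_{H_+,H_-}(u^R)$ localized in $X$ plus the neck is bounded above by $c_{d,k}(H_+,H_-)+\delta/2$.

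Passing to the limit $R\to\infty$ is the heart of the argument. Following the proof of Lemma \ref{lem:monotonicity}, I would apply Taubes' local Gromov compactness \cite[Proposition 3.3]{Tau98} to extract a subsequence for which the restrictions $u^R\cap(X\cup\text{neck})$, viewed inside $\ol{X}$ after the obvious identification of the neck with longer and longer pieces of the symplectization, converge to a proper $J_X$-holomorphic map $v$ in $\ol{X}$. The map $v$ still passes through $P$, has degree consistent with the restriction, and at each positive puncture is asymptotic to a Reeb orbit on $\partial X$. Semicontinuity of energy under this local limit gives
\begin{equation*}
\ME(v)\;\leq\;\liminf_{R\to\infty}\int_{u^R\cap(X\cup\text{neck})}\Omega_\phi\;\leq\;c_{d,k}(H_+,H_-)+\delta/2,
\end{equation*}
and combining this with the lower bound $\ME(v)\geq c_k^H(X)-\delta/2$ yields \eqref{eq:packing} up to $\delta$, which can be taken arbitrarily small.

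The main obstacle, exactly as in Lemma \ref{lem:monotonicity}, is the absence of an a priori genus bound for the curves $u^R$, which is needed to guarantee that the limit $v$ has finite genus and thus genuinely belongs to the moduli space of punctured $J_X$-curves in $\ol{X}$ relevant for the definition of $c_k^H$. I would handle this in the same way as there: replace each $u^R$ by its underlying simple curve without increasing the energy, use non-degeneracy of $H_\pm$ to reduce to finitely many possible asymptotic orbit sets and, after a further subsequence, finitely many relative homology classes, and then invoke the relative adjunction formula and asymptotic writhe bound \cite[Proposition 4.9 and Lemma 4.20]{Hut09} to obtain a uniform lower bound on the Euler characteristic. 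Given this genus control, the construction of $v$ and the energy estimate above go through, completing the proof.
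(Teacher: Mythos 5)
Your proposal is correct, and it carries out exactly the argument that the paper delegates to the literature: the paper's proof of Lemma~\ref{lem:packing} is a one-line citation stating that Hutchings' proof of the monotonicity lemma in \cite[Section~3]{Hut22} ``carries over verbatim,'' and that argument is precisely the neck stretch along the contact hypersurface $\partial X$ that you describe. Every step you outline — picking $(J_X,P)$ near-optimal for $c_k^H(X)$, building a family $J^R$ stretching the collar around $\partial X$ while remaining in $\MJ(H_+,H_-)$, using the shift-and-admissibility reduction from Lemma~\ref{lem:monotonicity} to make each piece of the energy decomposition non-negative, extracting a limit via Taubes' local Gromov compactness, and controlling the genus through the passage to underlying simple curves, non-degeneracy of $H_\pm$, and the relative adjunction/writhe bounds of \cite{Hut09} — is the content of Hutchings' proof adapted to this setting, and the adaptation you give is sound.

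Two small points worth sharpening, neither of which is a gap. First, you write ``compact $J_X$-holomorphic curve'' when describing the moduli space defining $c_k^H(X)$; these curves are punctured (proper, asymptotically cylindrical), not compact — a wording slip rather than a mathematical error. Second, when comparing $\ME(v)$ to the $\liminf$ of the areas over $X\cup\text{neck}$, it is worth noting explicitly that since $\ol{X}$ carries a global primitive $\lambda$ of its symplectic form and $v$ has only positive punctures, Stokes' theorem identifies Hutchings' energy $\sum_i\MA(\gamma_i^+)$ with the $d\lambda$-area of $v$, which is the quantity that is genuinely semicontinuous under the local current limit; making this identification explicit closes the small gap between ``action at the asymptotic orbits'' and ``area of the limiting current'' that your displayed inequality tacitly uses.
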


\begin{proof}
The proof of the monotonicity lemma given in \cite[Section 3]{Hut22} carries over verbatim to show the inequality \eqref{eq:packing}.
\end{proof}

\begin{lem}[Closed curve upper bound]
\label{lem:closed_curve_upper_bound}
Let $(X,\Omega)$ be a closed symplectic $4$-manifold. Assume that $H_\pm\in C^\infty(Y_\phi)$ are non-degenerate Hamiltonians satisfying $H_+>H_-$. Suppose that there exists a symplectic embedding $\iota:M_{H_+,H_-} \rightarrow X$. Let $Z\in H_2(X;\BZ)$ be a homology class and assume that the Gromov-Taubes invariant $\on{Gr}(X,\Omega;Z)$ does not vanish (see appendix \ref{subsec:review_taubes_gromov_invariants} for a review of $\on{Gr}$). Let $k\geq 0$ and $d\geq 1$ be integers such that $I(Z)=2k$ and $Z\cdot \iota_*[\Sigma]=d$. Then
\begin{equation}
\label{eq:closed_curve_upper_bound}
c_{d,k}(H_+,H_-)\leq \langle Z,[\Omega] \rangle.
\end{equation}
\end{lem}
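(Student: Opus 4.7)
My plan is to prove this via a neck-stretching argument along $\partial \iota(M_{H_+,H_-})$ in the closed manifold $X$, patterned on the monotonicity proof (Lemma \ref{lem:monotonicity}) and on the analogous closed-curve bound in \cite{Hut22}. Fixing an arbitrary $J \in \MJ(H_+,H_-)$ and $P \subset M_\phi$ with $|P|=k$, I need to exhibit a curve $u \in \MM^J(H_+,H_-,d,P)$ of energy at most $\langle Z,[\Omega]\rangle$. Invoking the shift property (Lemma \ref{lem:shift_spectral_number}) together with Lemma \ref{lem:shift_spectrum_energy}, and rescaling the embedding $\iota$ accordingly, I may first assume (after shifting $H_\pm$ by constants and translating $J$ and $P$ along the symplectization direction) that $J$ is symplectization-admissible on all of $\on{gr}^\pm(H_\pm)$ and that $P$ lies in the interior of $\iota(M_{H_+,H_-})$; both sides of \eqref{eq:closed_curve_upper_bound} transform in parallel under such shifts.

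For each $R > 0$ I will construct an $\Omega$-compatible almost complex structure $J^R$ on $X$ that realizes a stretched-neck model: inside $\iota(M_{H_+,H_-})$ I glue cylindrical necks of length $R$ at both boundary components, identifying a collar neighbourhood of the boundary with a region of $M_\phi$ of the form $M_{H_++R,H_--R}\setminus (\text{thin shell})$ on which $J^R$ is the pushforward of $J$; on the two caps $X^\pm\coloneqq X\setminus\iota(\interior M_{H_+,H_-})$ I extend $J^R$ to an $R$-independent, $\Omega$-compatible almost complex structure, chosen generically. Because $\on{Gr}(X,\Omega;Z)\neq 0$ and the ECH index condition $I(Z)=2k$ matches the Gromov-Taubes dimension formula for curves through $k$ generic points (see appendix \ref{subsec:review_taubes_gromov_invariants}), for each $R$ there exists an embedded $J^R$-holomorphic curve $C^R$ in class $Z$ passing through $P$ (the genericity needed for Gromov-Taubes can be achieved by perturbing $J^R$ inside the caps, away from $P$ and the stretching region). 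Its symplectic area equals $\langle Z,[\Omega]\rangle$.

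Letting $R\to\infty$, I will apply Taubes' local Gromov compactness \cite[Proposition 3.3]{Tau98} exactly as in Lemma \ref{lem:monotonicity} to extract, along a subsequence $R_i\to\infty$, a limit proper $J$-holomorphic curve $u$ in $M_\phi$ passing through $P$ and asymptotic to orbit sets $\alpha_\pm$ in $Y_{\phi_{H_\pm}}$. The degree computation uses a fibre $\Sigma_* \subset \iota(M_{H_+,H_-})$ of $M_\phi\to\BR\times\BR/\BZ$: by positivity of intersections, the identity $C^{R_i}\cdot [\Sigma_*]=Z\cdot\iota_*[\Sigma]=d$ forces all $d$ intersection points to lie in the stretched middle region and hence to persist in the limit, so $\deg(u)=\deg(\alpha_\pm)=d$. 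The energy bound follows from decomposing the total area $\langle Z,[\Omega]\rangle$ among the components of the neck-stretched limit: every cap or cylinder component produced by the limit has non-negative $\Omega$-area, since the $J^R$ are symplectization-admissible on the necks and $\Omega$-compatible on the caps. Therefore the middle contribution satisfies $\ME_{H_+,H_-}(u) \le \langle Z,[\Omega]\rangle$.

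The main obstacle will be ensuring that $u$ has finite genus so that it genuinely lies in $\MM^J(H_+,H_-,d,P)$; Taubes' local compactness alone does not bound the genus of the limit current. The plan, mirroring the end of the proof of Lemma \ref{lem:monotonicity}, is to pass first to the underlying simple curves of $C^{R_i}$ (which preserves the area bound and keeps degree at most $d$), then use non-degeneracy of $H_\pm$ to arrange that the asymptotic orbit sets are constant along a subsequence, use Gromov compactness to fix the relative homology class, and finally invoke Hutchings' relative adjunction formula together with the asymptotic writhe bound \cite[Proposition 4.9, Lemma 4.20]{Hut09} to obtain a uniform upper bound on the genus of the $C^{R_i}$. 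This yields a genuine element $u\in\MM^J(H_+,H_-,d,P)$ satisfying \eqref{eq:closed_curve_upper_bound}, completing the argument.
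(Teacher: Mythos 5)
You follow the same overall neck-stretching strategy as the paper, but there is a concrete gap in your reduction to the case where $J$ is symplectization admissible on the entire cylindrical ends of $M_{H_+,H_-}$. In the proof of monotonicity (Lemma~\ref{lem:monotonicity}) the whole argument lives inside the fixed symplectization $M_\phi$, so shifting the Hamiltonians, $J$, and $P$ along the symplectization direction is a legitimate move. Here, however, a fixed closed $(X,\Omega)$ and a fixed embedding $\iota:M_{H_+,H_-}\to X$ are part of the hypothesis. To make $J$ admissible on $\on{gr}^\pm(H_\pm')$ you must take $H_+'=H_++C_+$ and $H_-'=H_-+C_-$ with $C_+>0$ and $C_-<0$; the resulting cobordism $M_{H_+',H_-'}$ strictly contains $M_{H_+,H_-}$ and has larger $\Omega_\phi$-volume, so it generally does not embed in $X$. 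The phrase ``rescaling the embedding $\iota$ accordingly'' does not correspond to a well-defined operation, and there is no translation in $X$ that plays the role of the translations in $M_\phi$.

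The paper resolves this by modifying the \emph{target} rather than the embedding: cut $\iota(M_{H_+,H_-})$ out of $X$ and glue in the fatter cobordism $M_{H_+',H_-'}$ to obtain a closed symplectic manifold $(X,\Omega')$ diffeomorphic to $(X,\Omega)$ and satisfying $\langle Z,[\Omega']\rangle = \langle Z,[\Omega]\rangle + d(C_+-C_-)$. Since $\Omega$ and $\Omega'$ are connected by a path of symplectic forms, deformation invariance of the Gromov--Taubes invariant gives $\on{Gr}(X,\Omega';Z)\neq 0$, and by Lemma~\ref{lem:shift_spectrum_energy} both sides of~\eqref{eq:closed_curve_upper_bound} shift by the same amount, so the reduction is valid. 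Once this step is replaced, the rest of your plan (producing closed curves through $P$ from $\on{Gr}\neq 0$, stretching the neck along $Y_{\phi_{H_\pm}}$, extracting a limit via Taubes' local Gromov compactness, obtaining the degree and energy bounds, and controlling the genus via the relative adjunction formula and writhe bound) lines up with the paper's argument.
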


\begin{proof}
The proof of this lemma is very similar to the proof of Lemma \ref{lem:monotonicity}. We fix an almost complex structure $J\in\MJ(H_+,H_-)$ and a set $P\subset M_\phi$ of cardinality $k$. Let $\epsilon>0$. The goal is to construct a pseudo-holomorphic curve $u\in\MM^J(H_+,H_-,d,P)$ such that
\begin{equation}
\label{eq:closed_curve_upper_bound_proof_a}
\ME_{H_+,H_-}(u) \leq \langle Z,[\Omega] \rangle + \epsilon.
\end{equation}
The desired inequality \eqref{eq:closed_curve_upper_bound} follows since $\epsilon$ can be chosen arbitrarily small.\\
In order to find such a curve $u$ via neck stretching, we need the following additional assumption on $J$:
\begin{itemize}
\item[($\bullet$)] $J$ is symplectization admissible everywhere on the cylindrical ends attached to the cobordism $M_{H_+,H_-}$.
\end{itemize}
We may reduce to this case as follows: Let $H_\pm'$ be shifts of $H_\pm$ with the property the the almost complex structure $J$ is symplectization admissible on the entire cylindrical ends of the cobordism $M_{H_+',H_-'}$. Let $C_\pm$ be constants such that $H_\pm' = H_\pm + C_\pm$. We cut the cobordism $M_{H_+,H_-}$ out of $X$ and insert the cobordism $M_{H_+',H_-'}$ instead. This yields a new symplectic manifold $(X',\Omega')$. Clearly, the manifolds $X$ and $X'$ are diffeomorphic and we may regard $\Omega'$ as a symplectic form on $X$ which can be connected to $\Omega$ through a path of symplectic forms. The Gromov-Taubes invariant is invariant under such deformations (see \cite{Tau96}). Hence $\on{Gr}(X,\Omega';Z)$ is non-zero. We have
\begin{equation}
\langle Z,[\Omega'] \rangle = \langle Z,[\Omega] \rangle + d(C_+-C_-).
\end{equation}
Using Lemma \ref{lem:shift_spectrum_energy}, we see that replacing $H_\pm$ by $H_\pm'$ and $\Omega$ by $\Omega'$ changes both sides of \eqref{eq:closed_curve_upper_bound_proof_a} by the same amount. Since the additional assumption ($\bullet$) holds for $H_\pm'$, this concludes our reduction to this case.\\
The assumption that $\on{Gr}(X,\Omega;Z)$ does not vanish implies that for any compatible almost complex structure on $(X,\Omega)$ and any collection of $k$ distinct points in $X$ there exists a pseudo-holomorphic curve in $X$ through these points and representing the homology class $Z$ (see Lemma \ref{lem:gromov_taubes_nonvanishing_implies_curves}). We may neck stretch these curves in the exact same way as in the proof of Lemma \ref{lem:monotonicity} to find a curve $u$ satisfying \eqref{eq:closed_curve_upper_bound_proof_a}. We omit the details.
\end{proof}

\subsection{Proofs of basic properties}
\label{subsec:proofs_of_basic_properties}

In this subsection we extend the spectral invariants $c_{d,k}$ to arbitrary Hamiltonians and prove Theorems \ref{theorem:spectral_invariants_properties_basic} and \ref{theorem:spectral_invariants_properties_weyl} on the basic properties of the spectral invariants $c_{d,k}$ and $c_d$. Moreover, we prove Theorem \ref{theorem:finiteness_for_hamiltonian_diffeomorphisms} on the finiteness of the spectral invariants in the case that $\phi$ is a Hamiltonian diffeomorphism. First, we state and prove some immediate corollaries of the monotonicity property (Lemma \ref{lem:monotonicity}).

\begin{cor}
\label{cor:monotonicity}
Let $H_\pm$ and $H_\pm'$ be non-degenerate Hamiltonians. Assume that $H_+\geq H_+'$ and $H_-\leq H_-'$. Then
\begin{equation}
c_{d,k}(H_+,H_-)\geq c_{d,k}(H_+',H_-')
\end{equation}
for all $d\geq 1$ and all $k\geq 0$.
\end{cor}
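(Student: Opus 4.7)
The plan is to deduce this straight from the monotonicity lemma (Lemma \ref{lem:monotonicity}) combined with the non-negativity lemma (Lemma \ref{lem:non_negativity}), inserting the Hamiltonians $H_+'$ and $H_-'$ as intermediate steps in a two-stage chain. Since all four Hamiltonians $H_\pm$ and $H_\pm'$ are assumed non-degenerate, the hypotheses of both lemmas are satisfied, and we may freely use $H_+'$ or $H_-'$ in the role of the auxiliary Hamiltonian $H_0$.

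First, I would apply Lemma \ref{lem:monotonicity} with intermediate Hamiltonian $H_0 = H_+'$ and with the splitting $k_+ = 0$, $k_- = k$, giving
\begin{equation*}
c_{d,k}(H_+,H_-) \geq c_{d,0}(H_+,H_+') + c_{d,k}(H_+',H_-).
\end{equation*}
Because $H_+ \geq H_+'$, Lemma \ref{lem:non_negativity} gives $c_{d,0}(H_+,H_+') \geq 0$, so $c_{d,k}(H_+,H_-) \geq c_{d,k}(H_+',H_-)$. Next, I would apply Lemma \ref{lem:monotonicity} to $c_{d,k}(H_+',H_-)$ with intermediate Hamiltonian $H_0 = H_-'$ and splitting $k_+ = k$, $k_- = 0$, obtaining
\begin{equation*}
c_{d,k}(H_+',H_-) \geq c_{d,k}(H_+',H_-') + c_{d,0}(H_-',H_-).
\end{equation*}
Since $H_-' \geq H_-$, non-negativity gives $c_{d,0}(H_-',H_-) \geq 0$. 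Chaining the two inequalities yields the desired bound.

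There is no serious obstacle; the only minor point to check is the convention for $+\infty$ values, but the inequalities in Lemmas \ref{lem:monotonicity} and \ref{lem:non_negativity} remain valid in $\BR \cup \{+\infty\}$, so the chain of inequalities goes through verbatim regardless of whether any of the invariants are infinite.
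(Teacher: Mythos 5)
Your proposal is correct and uses exactly the same ingredients as the paper's proof, namely two applications of Lemma \ref{lem:monotonicity} inserting $H_+'$ and $H_-'$ as intermediate Hamiltonians, together with Lemma \ref{lem:non_negativity} to discard the auxiliary terms. The paper just performs the two monotonicity steps in the opposite order (first inserting $H_-'$, then $H_+'$), which is an immaterial difference.
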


\begin{proof}
We use Lemmas \ref{lem:non_negativity} and \ref{lem:monotonicity} repeatedly to estimate
\begin{IEEEeqnarray}{rCl}
c_{d,k}(H_+,H_-) & \geq & c_{d,k}(H_+,H_-') + c_{d,0}(H_-',H_-) \nonumber\\
& \geq & c_{d,0}(H_+,H_+') + c_{d,k}(H_+',H_-') + c_{d,0}(H_-',H_-) \nonumber\\
& \geq & c_{d,k}(H_+',H_-'). \nonumber
\end{IEEEeqnarray}
\end{proof}

\begin{cor}
\label{cor:finite_or_infinite}
Fix $d\geq 1$ and $k\geq 0$. Then $c_{d,k}$ either takes values in $\BR$ for all pairs of non-degenerate Hamiltonians or is equal to $+\infty$ for all such pairs.
\end{cor}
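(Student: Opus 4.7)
The plan is to show that finiteness of $c_{d,k}$ at one non-degenerate pair implies finiteness at every non-degenerate pair, via a direct sandwich argument combining the shift property (Lemma \ref{lem:shift_spectral_number}) with the monotonicity in the Hamiltonian arguments (Corollary \ref{cor:monotonicity}). The dichotomy then follows by contrapositive.

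Concretely, suppose $(H_+,H_-)$ and $(H_+',H_-')$ are two pairs of non-degenerate Hamiltonians, and assume $c_{d,k}(H_+,H_-) \in \BR$. Since $Y_\phi$ is compact, we may set
\begin{equation*}
C_+ \coloneqq \max_{Y_\phi}(H_+' - H_+), \qquad C_- \coloneqq \min_{Y_\phi}(H_-' - H_-),
\end{equation*}
so that $H_+ + C_+ \geq H_+'$ and $H_- + C_- \leq H_-'$ pointwise. Adding constants preserves non-degeneracy, so both $H_+ + C_+$ and $H_- + C_-$ are admissible inputs. Corollary \ref{cor:monotonicity} yields
\begin{equation*}
c_{d,k}(H_+ + C_+,\, H_- + C_-) \geq c_{d,k}(H_+',H_-'),
\end{equation*}
while Lemma \ref{lem:shift_spectral_number} evaluates the left-hand side as $c_{d,k}(H_+,H_-) + d(C_+ - C_-)$. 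Combining these gives the finite upper bound $c_{d,k}(H_+',H_-') \leq c_{d,k}(H_+,H_-) + d(C_+ - C_-) < +\infty$, so $c_{d,k}(H_+',H_-') \in \BR$.

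There is no real obstacle here — both tools invoked have no ordering hypothesis between $H_+$ and $H_-$, only that the correct ordering holds \emph{across} the two pairs, which we arranged by the choice of $C_\pm$. Taking the contrapositive, if $c_{d,k}(H_+',H_-') = +\infty$ for some non-degenerate pair, then $c_{d,k}(H_+,H_-) = +\infty$ for every non-degenerate pair, completing the dichotomy.
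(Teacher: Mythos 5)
Your proof is correct and follows exactly the same route as the paper: combine the shift property (Lemma~\ref{lem:shift_spectral_number}) with the ordering-based monotonicity (Corollary~\ref{cor:monotonicity}) to transfer finiteness from one non-degenerate pair to any other. The paper states this in one sentence without spelling out the constants $C_\pm$; your version is just more explicit.
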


\begin{proof}
Suppose that $c_{d,k}$ is finite for some pair of non-degenerate Hamiltonians $H_\pm$. By Lemma \ref{lem:shift_spectral_number}, $c_{d,k}$ must also be finite for all shifts of the Hamiltonians $H_\pm$. Together with Corollary \ref{cor:monotonicity} this implies that $c_{d,k}$ must be finite for all pairs of non-degenerate Hamiltonians.
\end{proof}

\begin{cor}
\label{cor:lipschitz_continuity}
Let $d\geq 1$ and $k\geq 0$. For $j\in \{0,1\}$ let $H_\pm^j\in C^\infty(Y_\phi)$ be non-degenerate Hamiltonians. Then
\begin{eqnarray*}
c_{d,k}(H_+^0,H_-^0) + d\cdot \left( \min_{Y_\phi} (H_+^1-H_+^0) + \min_{Y_\phi} (H_-^0-H_-^1)\right)
\leq c_{d,k}(H_+^1,H_-^1) \leq \\
c_{d,k}(H_+^0,H_-^0) + d\cdot \left( \max_{Y_\phi} (H_+^1-H_+^0) + \max_{Y_\phi} (H_-^0-H_-^1)\right).
\end{eqnarray*}
In particular, $c_{d,k}$ is Lipschitz continuous with Lipschitz constant $d$ in both entries. Here we endow $C^\infty(Y_\phi)$ with the $C^0$-norm.
\end{cor}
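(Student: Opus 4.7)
The plan is to deduce both inequalities by combining the shift property (Lemma \ref{lem:shift_spectral_number}) with the pointwise monotonicity already established in Corollary \ref{cor:monotonicity}; these are exactly the two tools that convert statements about additive constants into statements about arbitrary pointwise perturbations.

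For the upper bound, set
\begin{equation*}
b_+ \coloneqq \max_{Y_\phi}(H_+^1 - H_+^0), \qquad b_- \coloneqq \max_{Y_\phi}(H_-^0 - H_-^1),
\end{equation*}
so that pointwise $H_+^1 \leq H_+^0 + b_+$ and $H_-^1 \geq H_-^0 - b_-$. Corollary \ref{cor:monotonicity} gives
\begin{equation*}
c_{d,k}(H_+^1, H_-^1) \leq c_{d,k}(H_+^0 + b_+,\, H_-^0 - b_-),
\end{equation*}
and Lemma \ref{lem:shift_spectral_number} identifies the right-hand side with $c_{d,k}(H_+^0, H_-^0) + d(b_+ + b_-)$. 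For the lower bound, I would run exactly the same argument in reverse, using
\begin{equation*}
a_+ \coloneqq \min_{Y_\phi}(H_+^1 - H_+^0), \qquad a_- \coloneqq \min_{Y_\phi}(H_-^0 - H_-^1),
\end{equation*}
so that $H_+^1 \geq H_+^0 + a_+$ and $H_-^1 \leq H_-^0 - a_-$ pointwise; monotonicity then yields $c_{d,k}(H_+^1, H_-^1) \geq c_{d,k}(H_+^0 + a_+, H_-^0 - a_-)$, and a second application of the shift lemma converts the right-hand side to $c_{d,k}(H_+^0, H_-^0) + d(a_+ + a_-)$.

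The Lipschitz assertion is an immediate corollary: fixing $H_-^0 = H_-^1$, the $H_-$ terms drop out and both bounds collapse to
\begin{equation*}
|c_{d,k}(H_+^1, H_-) - c_{d,k}(H_+^0, H_-)| \leq d \cdot \|H_+^1 - H_+^0\|_{C^0},
\end{equation*}
since $|\max(H_+^1 - H_+^0)|$ and $|\min(H_+^1 - H_+^0)|$ are both bounded by the $C^0$ norm; the analogous estimate for the second entry follows by symmetry.

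There is no real obstacle here — the work was done in proving monotonicity (Lemma \ref{lem:monotonicity}) and the shift property. The only subtlety worth checking is that both sides of each inequality involve only non-degenerate Hamiltonians, so Corollaries \ref{cor:monotonicity} and Lemma \ref{lem:shift_spectral_number} apply as stated; since the hypotheses of the corollary already restrict to non-degenerate $H_\pm^j$, this is automatic and no approximation argument is needed at this stage.
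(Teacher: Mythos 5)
Your argument is exactly the paper's: combine the shift lemma (Lemma \ref{lem:shift_spectral_number}) with the monotonicity corollary (Corollary \ref{cor:monotonicity}) by sandwiching $H_\pm^1$ between constant shifts of $H_\pm^0$. The bookkeeping with $a_\pm$, $b_\pm$ is correct, and your observation that constant shifts preserve non-degeneracy closes the one subtlety worth noting.
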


\begin{proof}
We observe that
\begin{equation}
\label{eq:cor_lipschitz_continuity_proof_a}
H_+^0 + \min_{Y_\phi} (H_+^1-H_+^0) \leq H_+^1 \quad \text{and}\quad H_-^0 - \min_{Y_\phi} (H_-^0-H_-^1) \geq H_-^1.
\end{equation}
The first inequality in the corollary follows from \eqref{eq:cor_lipschitz_continuity_proof_a}, Lemma \ref{lem:shift_spectral_number} and Corollary \ref{cor:monotonicity}. The second inequality follows from an analogous argument. Lipschitz continuity with Lipschitz constant $d$ is immediate from these inequalities.
\end{proof}

By Corollary \ref{cor:lipschitz_continuity} there exists a unique continuous extension
\begin{equation}
c_{d,k}: C^\infty(Y_\phi)\times C^\infty(Y_\phi)\rightarrow \BR\cup\{+\infty\}
\end{equation}
to all pairs of Hamiltonians $(H_+,H_-)$ which are not necessarily non-degenerate.

\begin{proof}[Proof of Theorem \ref{theorem:spectral_invariants_properties_basic}]
Lipschitz continuity is immediate from Lipschitz continuity of the restriction of $c_{d,k}$ to all pairs of non-degenerate Hamiltonians. Shift, non-negativity, monotonicity, the packing property and the closed curve upper bound are immediate from continuity and Lemmas \ref{lem:shift_spectral_number}, \ref{lem:non_negativity}, \ref{lem:monotonicity}, \ref{lem:packing} and \ref{lem:closed_curve_upper_bound}.\\
We check property (2). By continuity, it suffices to consider the case of non-degenerate Hamiltonians $H_\pm$. Fix $J\in\MJ(H_+,H_-)$ and a set $P\subset M_\phi$ of cardinality $k$. Choose an arbitrary set $P'\subset M_\phi$ containing $P$ and of cardinality $k'$. Let $\epsilon>0$ be arbitrary. There exists a $J$-curve $u\in\MM^J(H_+,H_-,d,P')$ such that $\ME_{H_+,H_-}(u)< c_{d,k'}(H_+,H_-)+\epsilon$. Since $P\subset P'$, the curve $u$ is also contained in $\MM^J(H_+,H_-,d,P)$. This shows that $c_{d,k}(H_+,H_-)\leq c_{d,k'}(H_+,H_-)+\epsilon$. Since $\epsilon>0$ was arbitrary, the desired inequality follows.\\
We show sublinearity. Again we may assume that $H_\pm$ are non-degenerate. Fix $J\in\MJ(H_+,H_-)$ and $P\subset M_\phi$ of cardinality $k$. Moreover, let $\epsilon>0$ be arbitrary. Let $P=P_1\cup P_2$ be an arbitrary partition into two subsets of cardinalities $k_1$ and $k_2$, respectively. There exists curves $u_j\in\MM^J(H_+,H_-,d_j,P_j)$ for $j\in\{1,2\}$ such that $\ME_{H_+,H_-}(u_j)\leq c_{d_j,k_j}(H_+,H_-)+\epsilon/2$. Let $u$ be the union of $u_1$ and $u_2$. Then $u\in\MM^J(H_+,H_-,d,P)$ and
\begin{equation}
\ME_{H_+,H_-}(u)= \ME_{H_+,H_-}(u_1) + \ME_{H_+,H_-}(u_1) \leq c_{d_1,k_1}(H_+,H_-) + c_{d_2,k_2}(H_+,H_-) + \epsilon.
\end{equation}
Sublineariy follows since $\epsilon>0$ was arbitrary.\\
It remains to prove spectrality. If $\phi$ is rational, then the relative action spectrum $\on{Spec}_d(H_+,H_-)$ is a closed subset of $\BR$. This immediately implies that $c_{d,k}(H_+,H_-)\in\on{Spec}_d(H_+,H_-)$ for all non-degenerate Hamiltonians $H_\pm$. Now suppose that $H_\pm$ are not necessarily non-degenerate. Pick sequences of non-degenerate Hamiltonians $H_\pm^j$ converging to $H_\pm$ in the $C^\infty$-topology. By the above, we may pick orbit sets $\alpha_\pm^j$ in $\on{gr}(H_\pm^j)$ of degree $d$ and relative homology classes $Z^j\in H_2(M_\phi,\alpha_+^j,\alpha_-^j)$ such that $\langle Z^j,\Omega_\phi\rangle = c_{d,k}(H_+^j,H_-^j)$. Since the orbit sets $\alpha_\pm^j$ have bounded degree, we may use the Azela-Ascoli theorem to pass to a subsequence converging to orbit sets $\alpha_\pm$ of $Y_{\phi_{H_\pm}}$. This shows that the distance of the sequence of numbers $c_{d,k}(H_+^j,H_-^j)$ to the relative action spectrum $\on{Spec}_d(H_+,H_-)$ converges to $0$. It follows from the fact that $\on{Spec}_d(H_+,H_-)$ is closed that the limit $c_{d,k}(H_+,H_-)$ of the sequence $c_{d,k}(H_+^j,H_-^j)$ is contained in the relative action spectrum.
\end{proof}

\begin{proof}[Proof of Theorem \ref{theorem:spectral_invariants_properties_weyl}]
The shift property is immediate from the corresponding property of the spectral numbers $c_{d,k}$. Let $H_+\geq H_-$. Then it follows from the definition that $c_d(H_+,H_-)$ is bounded below by $c_{d,0}(H_+,H_-)$. The latter is non-negative by the non-negativity property of the spectral numbers $c_{d,k}$. The vanishing property is a consequence of the non-negativity property and monotonicity. Indeed, monotonicity implies that $c_d(H,H)\geq 2c_d(H,H)$. Using $c_d(H,H)\geq 0$, we deduce that $c_d(H,H)$ vanishes. Next, we prove monotonicity. For all non-negative integers $k_\pm$ we have
\begin{eqnarray*}
c_d(H_+,H_-) \geq c_{d,(k_++k_-)(d-g+1)}(H_+,H_-) -(k_++k_-)A \\
\geq \left(c_{d,k_+(d-g+1)}(H_+,H_0)-k_+A\right)+\left(c_{d,k_-(d-g+1)}(H_0,H_-)-k_-A\right).
\end{eqnarray*}
Here the first inequality follows from the definition of $c_d$ and the second inequality is a consequence of the monotonicity of the spectral numbers $c_{d,k}$. Taking the supremum over all pairs of non-negative integers $k_\pm$ yields monotonicity for $c_d$. Lipschitz continuity can be deduced from the shift property, non-negativity and monotonicity, analogous to the proof of Corollary \ref{cor:lipschitz_continuity}. Spectrality follows from the spectrality of $c_{d,k}$ together with the fact that if $\phi$ is rational, the relative action spectrum $\on{Spec}(H_+,H_-,d)$ is closed.\\
If remains to prove the Weyl law. We begin by observing that it suffices to show the inequality
\begin{equation}
\label{eq:weyl_law_proof_a}
\liminf_{i\rightarrow\infty} \frac{c_{d_i}(H_+,H_-)}{d_i} \geq A^{-1} \int_{Y_\phi} (H_+-H_-) dt\wedge\omega_\phi.
\end{equation}
Indeed, interchanging the roles of $H_\pm$ yields the inequality
\begin{equation}
\label{eq:weyl_law_proof_b}
\limsup_{i\rightarrow\infty} -\frac{c_{d_i}(H_-,H_+)}{d_i} \leq A^{-1} \int_{Y_\phi} (H_+-H_-) dt\wedge\omega_\phi.
\end{equation}
Using monotonicity and the vanishing property we deduce
\begin{equation}
0 = c_{d_i}(H_+,H_+) \geq c_{d_i}(H_+,H_-) + c_{d_i}(H_-,H_+)
\end{equation}
which in particular implies
\begin{equation}
\limsup_{i\rightarrow\infty}\frac{c_{d_i}(H_+,H_-)}{d_i} \leq \limsup_{i\rightarrow\infty} -\frac{c_{d_i}(H_-,H_+)}{d_i}.
\end{equation}
Together with \eqref{eq:weyl_law_proof_a} and \eqref{eq:weyl_law_proof_b} this implies the Weyl law.\\
Let us now turn to the proof of \eqref{eq:weyl_law_proof_a}. It follows from the shift property that both sides of \eqref{eq:weyl_law_proof_a} change by the same amount if we replace $H_+$ by $C+H_+$ for some constant $C\in\BR$. Thus we may assume w.l.o.g. that $H_+>H_-$. Let us abbreviate the volume of the symplectic cobordism $M_{H_+,H_-}$ by $V$. Then the right hand side of \eqref{eq:weyl_law_proof_a} is simply given by $A^{-1}V$. Let $\epsilon>0$ be arbitrary. Choose a subset $B\subset\on{int}(M_{H_+,H_-})$ which consists of finitely many disjoint symplectically embedded balls and whose volume $V'$ satisfies $V'\geq V-\epsilon$. For $i>0$, we set
\begin{equation}
k_i\coloneqq \left\lfloor \frac{Vd_i}{A^2}\right\rfloor \qquad\text{and}\qquad \ell_i \coloneqq k_i(d_i-g+1).
\end{equation}
It follows from the definition of the spectral numbers $c_d$ and the packing property of the spectral numbers $c_{d,k}$ that
\begin{equation}
c_{d_i}(H_+,H_-) \geq c_{\ell_i}^H(B) - k_i A.
\end{equation}
Using the Weyl law for Hutchings' capacities $c_k^H$ (see \cite{Hut22}), we compute
\begin{equation}
\lim_{i\rightarrow\infty} \frac{c_{\ell_i}^H(B) - k_i A}{d_i} = \lim_{i\rightarrow \infty} \frac{2\cdot V'^{1/2}\cdot \ell_i^{1/2} - k_i A + O(\ell_i^{1/4})}{d_i} = \frac{2(VV')^{1/2}-V}{A}.
\end{equation}
We deduce that
\begin{equation}
\liminf_{i\rightarrow\infty} \frac{c_{d_i}(H_+,H_-)}{d_i} \geq \frac{2(V(V-\epsilon))^{1/2}-V}{A}
\end{equation}
for all $\epsilon>0$. This implies \eqref{eq:weyl_law_proof_a}.
\end{proof}

\begin{theorem}
\label{theorem:finiteness_for_hamiltonian_diffeomorphisms}
Suppose that $\phi$ is a Hamiltonian diffeomorphism of $(\Sigma,\omega)$. Then the invariants $c_{d,k}^\phi$ and $c_d^\phi$ are finite for all positive integers $d\geq g$ and non-negative integers $k$. Moreover, we have the following estimate:
\begin{equation}
\label{eq:finiteness_for_hamiltonian_diffeomorphisms}
c_{d,1}^\phi(0,0)\leq \frac{A}{d-g+1}.
\end{equation}
\end{theorem}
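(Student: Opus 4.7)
The plan is to apply the closed curve upper bound (Theorem \ref{theorem:spectral_invariants_properties_basic}(9)) in the closed symplectic $4$-manifold $X=\Sigma\times S^2$ to get finiteness, and then to extract the sharp estimate from the vanishing of $c_d^\phi$ combined with the additive monotonicity property.

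\textbf{Finiteness.} Since $\phi=\phi_K^1$ is Hamiltonian, the map $\Phi(x,t,s)=(s+K(t,x),[t,\phi_K^t(x)])$ is a symplectomorphism from $(\Sigma\times S^1\times\BR,\omega+ds\wedge dt)$ onto $(M_\phi,\Omega_\phi)$. For $H_\pm\in C^\infty(Y_\phi)$ with $H_+>H_-$, the cobordism $M_{H_+,H_-}$ pulls back through $\Phi$ to a bounded region contained in some slab $\Sigma\times S^1\times[s_-,s_+]$, which for every $B>s_+-s_-$ embeds symplectically into $(X,\omega\oplus B\sigma)$ by placing the annulus $S^1\times[s_-,s_+]$ into an equatorial band of $S^2$. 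The class $Z_c\coloneqq c[\Sigma\times *]+d[*\times S^2]$ satisfies $I(Z_c)=\langle c_1(X),Z_c\rangle+Z_c\cdot Z_c=2(c(d-g+1)+d)$ and $Z_c\cdot[\Sigma\times *]=d$; the hypothesis $d\ge g$ gives $d\ge(g-1)c/(c+1)$ for every $c\ge 0$, so Theorem \ref{theorem:non_vanishing_taubes_gromov} yields $\on{Gr}(X,\omega\oplus B\sigma;Z_c)\ne 0$. The closed curve upper bound therefore gives
\[
c_{d,c(d-g+1)+d}^\phi(H_+,H_-)\le cA+dB.
\]
Since the indices $c(d-g+1)+d$ are cofinal in $\BZ_{\ge 0}$, the increasing property (Theorem \ref{theorem:spectral_invariants_properties_basic}(2)) implies that $c_{d,k}^\phi$ is finite for every $k\ge 0$, and since $c_{d,k(d-g+1)}^\phi(H_+,H_-)-kA\le dB$ uniformly in $k$, also $c_d^\phi(H_+,H_-)\le dB<\infty$.

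\textbf{The estimate.} Finiteness of $c_d^\phi$ combined with the vanishing property (Theorem \ref{theorem:spectral_invariants_properties_weyl}(3)) gives $c_d^\phi(0,0)=0$, which unpacks to $c_{d,k(d-g+1)}^\phi(0,0)\le kA$ for every $k\ge 0$ and in particular to $c_{d,d-g+1}^\phi(0,0)\le A$. On the other hand, iterating the monotonicity property (Theorem \ref{theorem:spectral_invariants_properties_basic}(7)) with $H_+=H_-=H_0=0$ and the trivial partition $d-g+1=\underbrace{1+\cdots+1}_{d-g+1\text{ terms}}$ yields
\[
c_{d,d-g+1}^\phi(0,0)\ge(d-g+1)\,c_{d,1}^\phi(0,0),
\]
and combining the two inequalities delivers $c_{d,1}^\phi(0,0)\le A/(d-g+1)$. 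The one delicate step is the construction of the symplectic embedding $M_{H_+,H_-}\hookrightarrow X$: it is precisely the Hamiltonian hypothesis on $\phi$ that provides the global trivialization $\Phi$ and allows any bounded cobordism to be realized inside the product $\Sigma\times S^2$. All other ingredients are bookkeeping with the ECH index formula, the non-vanishing theorem, and the axioms of $c_{d,k}$ and $c_d$ already established.
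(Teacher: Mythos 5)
Your proof is correct and follows essentially the same approach as the paper: embed the relevant cobordism in $\Sigma\times S^2$ using the Hamiltonian hypothesis, invoke Theorem~\ref{theorem:non_vanishing_taubes_gromov} for the classes $Z_c=c[\Sigma\times *]+d[*\times S^2]$, and apply the closed curve upper bound to get $c_{d,c(d-g+1)+d}(0,0)\le cA+Bd$, from which finiteness follows by the increasing property. Your derivation of the estimate~\eqref{eq:finiteness_for_hamiltonian_diffeomorphisms} takes a slightly different but equally valid route: instead of directly writing $c_{d,1}(0,0)\le \tfrac{c_{d,(d-g+1)c+d}(0,0)}{(d-g+1)c+d}\le\tfrac{Ac+Bd}{(d-g+1)c+d}\to \tfrac{A}{d-g+1}$ as the paper does, you first observe that finiteness of $c_d$ forces $c_d(0,0)=0$ by the vanishing property, hence $c_{d,d-g+1}(0,0)\le A$, and then combine with the superadditivity $c_{d,d-g+1}(0,0)\ge (d-g+1)c_{d,1}(0,0)$ coming from iterated monotonicity. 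This packages the limiting argument into the already-proved vanishing axiom and is arguably cleaner, though it is logically equivalent (the vanishing property is itself proved via superadditivity and non-negativity, so both arguments ultimately rest on the same inequalities). One small remark: the formula you write for the symplectomorphism $\Phi$ does not exactly match the paper's $f_K:M_{\phi_K}\to M_{\on{id}}$ in \eqref{eq:diffeomorphism_between_symplectizations}, since $f_K$ has $K(t,\phi_K^t(x))$ rather than $K(t,x)$ in the first slot; but this does not affect the argument, which only uses the existence of such a symplectomorphism identifying $M_\phi$ with the trivial symplectization so that bounded cobordisms fit inside $\Sigma\times S^2$ for $B$ large enough.
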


\begin{proof}
Our strategy is to apply the closed curve upper bound in combination with Theorem \ref{theorem:non_vanishing_taubes_gromov}. Set $X\coloneqq \Sigma\times S^2$ and equip it with the symplectic form $\Omega\coloneqq \omega\oplus\sigma$ where $\sigma$ is an area form on $S^2$ of total area $B$. Our first step is to show that there exists an embedding $\iota:Y_\phi\rightarrow X$ such that $\iota^*\Omega = \omega_\phi$ if $B$ is sufficiently large. We observe that the symplectization of the mapping torus of the identity $\on{id}_\Sigma$ is simply given by
\begin{equation}
M_{\on{id}_\Sigma} = \BR\times \BR/\BZ \times \Sigma\qquad \Omega_{\on{id}_\Sigma} = ds\wedge dt + \omega.
\end{equation}
Since $\phi$ is a Hamiltonian diffeomorphism, we may choose a Hamiltonian $H:\BR/\BZ\times \Sigma\rightarrow\BR$ such that $\phi=\phi_H^1$. The mapping torus $Y_\phi$ can be identified with the graph $\on{gr}(H)\subset M_{\on{id}_\Sigma}$. Choose $B>0$ sufficiently large such that $\max |H|< B/2$. Then $Y_\phi$ is contained in the open subset
\begin{equation}
(-B/2,B/2)\times \BR/\BZ\times \Sigma\subset M_{\on{id}_\Sigma}.
\end{equation}
This set is symplectomorphic to the open subset $\Sigma\times (S^2\setminus \{p_\pm\})\subset X$ where $p_\pm$ denote the north and south pole of the sphere $S^2$. This shows that $(Y_\phi,\omega_\phi)$ embeds into $(X,\Omega)$.\\
Let $c$ be an arbitrary non-negative integer and define the homology class $Z\in H_2(X)$ by \eqref{eq:non_vanishing_taubes_gromov_definition_homology_class}. By assumption $d\geq g$ and hence the condition $d\geq (g-1)\frac{c}{c+1}$ in Theorem \ref{theorem:non_vanishing_taubes_gromov} holds automatically true. Thus Theorem \ref{theorem:non_vanishing_taubes_gromov} implies that $\on{Gr}(X,\Omega;Z)\neq 0$. We have
\begin{equation}
I(Z) = \langle Z,c_1(X)\rangle + Z\cdot Z = 2(1-g)c + 2d + 2cd = 2((d-g+1)c + d)
\end{equation}
and
\begin{equation}
\langle Z,\Omega \rangle = Ac + Bd.
\end{equation}
Clearly, the homological intersection number of $Z$ with a fibre of $Y_\phi\rightarrow\BR/\BZ$ is equal to $d$. We apply the closed curve upper bound and obtain
\begin{equation}
\label{eq:finiteness_for_hamiltonian_diffeomorphisms_proof_a}
c_{d,(d-g+1)c + d}(0,0) \leq Ac + Bd.
\end{equation}
Since $c_{d,k}$ is increasing in $k$ and $c$ can be chosen arbitrarily large, this implies that $c_{d,k}(0,0)$ is finite for all non-negative $k$. By Corollary \ref{cor:finite_or_infinite}, this implies finiteness of $c_{d,k}$ for all pairs of Hamiltonians. Using monotonicity of the spectral invariants $c_{d,k}$, we can estimate
\begin{equation}
c_{d,1}(0,0) \leq \frac{c_{d,(d-g+1)c + d}(0,0)}{(d-g+1)c + d} \leq \frac{Ac + Bd}{(d-g+1)c + d} \longrightarrow \frac{A}{d-g+1}\quad (c\rightarrow\infty).
\end{equation}
This yields the desired inequality \eqref{eq:finiteness_for_hamiltonian_diffeomorphisms}. Finiteness of $c_d$ is immediate from \eqref{eq:finiteness_for_hamiltonian_diffeomorphisms_proof_a} and the fact that $c_{d,k}$ is increasing in $k$.
\end{proof}

\section{Applications}
\label{sec:applications}

\subsection{Quantitative closing lemmas}
\label{subsec:quantitative_closing_lemma}

In this section, we use our elementary spectral invariants to prove Theorem \ref{theorem:quantitative_closing_lemma_hamiltonian_diffeomorphisms}, i.e.\ the quantitative closing lemma for Hamiltonian diffeomorphisms of surfaces. Our proof mirrors the arguments in \cite{EH21} based on PFH spectral invariants.

\begin{definition}
\label{def:spectral_gap}
Let $\phi$ be an area preserving diffeomorphism of $(\Sigma,\omega)$ and let $d\geq 1$ be a positive integer. We define the {\it spectral gap} of $\phi$ in degree $d$ to be the number
\begin{equation}
\on{gap}_d(\phi) \coloneqq c_{d,1}^\phi(0,0) \in [0,\infty].
\end{equation}
\end{definition}

The following result is the analogue of \cite[Proposition 6.2]{EH21}.

\begin{prop}
\label{prop:from_spectral_gap_to_closing_lemma}
Suppose that $\phi$ is a rational area-preserving diffeomorphism of $(\Sigma,\omega)$. Let $\MU\subset\Sigma$ be a nonempty open subset and let $H$ be a $(\MU,a,l)$-admissible Hamiltonian. Assume that $\on{gap}_d(\phi)< a$. Then there exists $\tau\in [0,l^{-1}\on{gap}_d(\phi)]$ such that $\phi_{\tau H}$ has a periodic orbit of period at most $d$ intersecting the open set $\MU$.
\end{prop}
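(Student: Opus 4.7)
The plan is to argue by contradiction. Set $\delta := l^{-1}\on{gap}_d(\phi)$ and let $\tau_* \in [0,+\infty]$ be the infimum of those $\tau \geq 0$ for which $\phi_{\tau H}$ has a periodic orbit of period at most $d$ meeting $\MU$. Negating the conclusion forces $\tau_* > \delta$, and I will track the function $f(\tau) := c_{d,1}^\phi(\tau H, 0)$ on the interval $[0,\tau_*)$ on which no such orbit exists, deriving a contradiction from the interplay of spectrality with the packing property.

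The first step uses the support condition on $H$. Since $H$ is smooth and identically zero on $\Sigma \setminus \MU$, all its derivatives vanish on $\partial \MU$; hence the Hamiltonian vector field $X_{\tau H}$ vanishes on $\Sigma \setminus \MU$, and $\phi_{\tau H}$ agrees with $\phi$ there. Moreover, the graph $\on{gr}(\tau H) \subset M_\phi$ coincides with $\on{gr}(0) = Y_\phi \times \{0\}$ over $\Sigma \setminus \MU$. Consequently, for $\tau \in [0,\tau_*)$, every degree-$d$ orbit set of $Y_{\phi_{\tau H}}$ is forced to avoid $\MU$ and, viewed as a $1$-cycle in $M_\phi$ via $Y_{\phi_{\tau H}} \cong \on{gr}(\tau H)$, coincides with a degree-$d$ orbit set of $Y_\phi$ viewed via $Y_\phi \cong \on{gr}(0)$. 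Comparing relative homology classes in $M_\phi$ then yields the containment
\[
\on{Spec}_d(\tau H, 0) \;\subseteq\; \on{Spec}_d(0,0) \qquad \text{for every } \tau \in [0,\tau_*).
\]
Since $\phi$ is rational, $\on{Spec}_d(0,0)$ is closed in $\BR$ (Lemma \ref{lem:relative_action_spectrum_closed}) and of measure zero (Lemma \ref{lem:relative_action_spectrum_measure_zero}); every connected subset of it is thus a point. Lipschitz continuity of $f$ and spectrality from Theorem \ref{theorem:spectral_invariants_properties_basic} then force $f$ to be continuous with image in $\on{Spec}_d(0,0)$, so $f \equiv f(0) = \on{gap}_d(\phi)$ on $[0,\tau_*)$.

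The packing property will supply the contradiction. Because $\tau H \geq \tau$ on $I \times D$, the product $[0,\tau] \times I \times D \subset M_\phi$ lies inside the cobordism $M_{\tau H, 0}$ and is symplectomorphic to a polydisk of areas $\tau l$ and $a$. Embedding a round ball $B^4(\min(\tau l, a))$ into this polydisk and invoking the packing property of Theorem \ref{theorem:spectral_invariants_properties_basic} together with $c_1^H(B^4(r)) = r$ from \cite{Hut22}, we obtain $f(\tau) \geq \min(\tau l, a)$. Picking any $\tau \in (\delta,\tau_*)$ close enough to $\delta$ that $\tau l < a$ (possible because $\on{gap}_d(\phi) < a$), we get $\on{gap}_d(\phi) = f(\tau) \geq \tau l > \delta l = \on{gap}_d(\phi)$, a contradiction. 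Hence $\tau_* \leq \delta$, and an Arzel\`a--Ascoli extraction along a sequence $\tau_n \downarrow \tau_*$ of orbits meeting $\MU$ produces the required periodic orbit of $\phi_{\tau H}$ for some $\tau \in [0,\delta]$. The main obstacle is the borderline subcase $\tau_* = \delta$, in which the limiting orbit may lie on $\partial \MU$ rather than in $\MU$; this boundary subtlety is the only nontrivial point beyond the main contradiction scheme and requires a short separate argument using that any orbit of $\phi_{\delta H}$ on $\partial \MU$ is automatically a periodic orbit of $\phi$.
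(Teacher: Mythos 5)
Your argument is structurally the same as the paper's (spectrality + constancy on a $\tau$-interval + a strict packing lower bound yields a contradiction), but there is a genuine gap in how you handle the borderline case, and the compensating step the paper uses is missing.

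The issue is the deduction ``Negating the conclusion forces $\tau_* > \delta$.'' Negating the conclusion only gives $S := \{\tau \ge 0 : \phi_{\tau H}\text{ has a period-}\le d\text{ orbit meeting }\MU\}$ disjoint from $[0,\delta]$, hence $\tau_* = \inf S \ge \delta$; equality $\tau_* = \delta$ with $\delta \notin S$ is entirely consistent with the negated conclusion. In that case the interval $(\delta, \tau_*)$ is empty, and the step ``pick $\tau \in (\delta,\tau_*)$ close to $\delta$'' has nothing to pick: the packing bound at $\tau = \delta$ itself gives only $f(\delta) \ge \delta l = \on{gap}_d(\phi)$, which is not a contradiction with $f(\delta) = \on{gap}_d(\phi)$. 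Your own final sentence flags this (``the borderline subcase $\tau_* = \delta$''), but the proposed patch---observing that an orbit of $\phi_{\delta H}$ on $\partial\MU$ is an orbit of $\phi$---does not produce an orbit meeting the open set $\MU$, so it does not close the gap. (The Arzel\`a--Ascoli paragraph is also structurally out of place: in a contradiction proof, nothing remains to be done once the contradiction is reached, and if instead you are trying to turn ``$\tau_* \le \delta$'' into an actual witness, you again hit the same $\partial\MU$ problem when $\tau_* = \delta$.)

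The paper resolves exactly this point by first choosing an open $\MV$ with $\ol{\MV} \subset \MU$ so that $H$ is still $(\MV,a,l)$-admissible, and then running a compactness argument: if orbits of period $\le d$ met $\MV$ for a sequence $\tau_n \downarrow l^{-1}\on{gap}_d(\phi)$, a limiting orbit would meet $\ol{\MV} \subset \MU$, contradicting the standing assumption. This yields an $\epsilon > 0$ of genuine slack, $[0,\, l^{-1}\on{gap}_d(\phi) + \epsilon]$, on which the spectrum (and hence $c_{d,1}$) is constant, and then the packing bound at the endpoint produces $\min(\on{gap}_d(\phi)+\epsilon l,\, a) > \on{gap}_d(\phi)$, a strict inequality. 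To repair your proof you would need the analogous device: shrink to $\MV \Subset \MU$ and show by compactness that $\tau_* > \delta$ strictly, after which your argument goes through verbatim. Your observation that $\on{Spec}_d(\tau H, 0) \subseteq \on{Spec}_d(0,0)$ (rather than equality) is a fine and slightly cleaner way to run the constancy step, and the polydisk/ball embedding is correct; the only real deficiency is the missing compactness step guaranteeing strict slack past $\delta$.
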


\begin{proof}
We assume by contradiction that $\phi_{\tau H}$ does not have a periodic orbit of period at most $d$ intersecting $\MU$ for any $\tau\in [0,l^{-1}\on{gap}_d(\phi)]$. We choose an open subset $\MV\subset\Sigma$ such that $\ol{\MV}\subset\MU$ and $H$ is also $(\MV,a,l)$-admissible. We claim that there exists $\epsilon>0$ such that there are no periodic orbits of period at most $d$ intersecting $\MV$ for any $\tau\in [0,l^{-1}\on{gap}_d(\phi)+\epsilon]$. This follows from a simple compactness argument. As a consequence, the relative action spectrum $\on{Spec}_d(\tau H,0)$ remains constant as $\tau$ ranges over $[0,l^{-1}\on{gap}_d(\phi)+\epsilon]$. It follows from continuity and spectrality of the invariant $c_{d,1}$ and the fact that the relative action spectrum has measure zero that $c_{d,1}(\tau H,0)$ must be constant in $\tau$ as well. We conclude that
\begin{equation}
\label{eq:from_spectral_gap_to_closing_lemma_proof_a}
\on{gap}_d(\phi) = c_{d,1}(0,0) = c_{d,1}(\delta H,0)
\end{equation}
where we abbreviate $\delta\coloneqq l^{-1}\on{gap}_d(\phi)+\epsilon$. Set $b\coloneqq \min(\delta l,a)$. Then the ball $B^4(b)$ symplectically embeds into $M_{\delta H,0}$. Equation \eqref{eq:from_spectral_gap_to_closing_lemma_proof_a} and the packing property imply that
\begin{equation}
\on{gap}_d(\phi) = c_{d,1}(\delta H,0) \geq c_1^H(B^4(b)) = b = \min(\on{gap}_d(\phi) + \epsilon l,a) > \on{gap}_d(\phi).
\end{equation}
This is a contradiction.
\end{proof}

Proposition \ref{prop:from_spectral_gap_to_closing_lemma} reduces the proof of quantitative closing lemmas to establishing the existence of arbitrarily small spectral gaps. If $\phi$ is a Hamiltonian diffeomorphism of $(\Sigma,\omega)$, we are able to do so without appealing to Floer theory (cf. \cite[Lemma 7.1]{EH21}).

\begin{lem}
\label{lem:small_spectral_gaps_for_hamiltonian_diffeomorphisms}
Let $d\geq g$ be a positive integer. Then
\begin{equation}
\label{eq:small_spectral_gaps_for_hamiltonian_diffeomorphisms}
\on{gap}_d(\phi) \leq \frac{A}{d-g+1}
\end{equation}
for every Hamiltonian diffeomorphism $\phi$ of $(\Sigma,\omega)$.
\end{lem}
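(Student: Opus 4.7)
The lemma is simply a reformulation of inequality \eqref{eq:finiteness_for_hamiltonian_diffeomorphisms} already established in Theorem \ref{theorem:finiteness_for_hamiltonian_diffeomorphisms}. By Definition \ref{def:spectral_gap}, we have $\operatorname{gap}_d(\phi) = c_{d,1}^\phi(0,0)$, and the cited theorem gives precisely $c_{d,1}^\phi(0,0) \leq A/(d-g+1)$ whenever $\phi$ is Hamiltonian and $d \geq g$. The plan is therefore to write the proof as a one-line invocation of that theorem.

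For context, the estimate in Theorem \ref{theorem:finiteness_for_hamiltonian_diffeomorphisms} proceeds via the following route, which one could reproduce here if one preferred a self-contained argument. Since $\phi$ is Hamiltonian, one realizes $Y_\phi$ as the graph of a time-dependent Hamiltonian inside the symplectization of the identity mapping torus $\mathbb{R}\times\mathbb{R}/\mathbb{Z}\times\Sigma$, which for any sufficiently large $B>0$ embeds symplectically into the closed $4$-manifold $(\Sigma\times S^2,\omega\oplus\sigma)$ where $\sigma$ is an area form on $S^2$ of total area $B$. For each non-negative integer $c$, one then applies the closed curve upper bound from Theorem \ref{theorem:spectral_invariants_properties_basic} to the class $Z = c[\Sigma\times\ast] + d[\ast\times S^2]$: its Gromov-Taubes invariant is nonzero by Theorem \ref{theorem:non_vanishing_taubes_gromov} (the hypothesis $d\geq (g-1)\tfrac{c}{c+1}$ is automatic from $d\geq g$), its ECH index equals $2((d-g+1)c+d)$, its symplectic area equals $Ac+Bd$, and its fibre intersection number equals $d$. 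This yields
\begin{equation*}
c_{d,(d-g+1)c+d}^\phi(0,0) \leq Ac + Bd.
\end{equation*}

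Iterating the monotonicity property (7) of Theorem \ref{theorem:spectral_invariants_properties_basic} with $H_+=H_-=H_0=0$ gives the superadditivity $c_{d,K}^\phi(0,0) \geq K\cdot c_{d,1}^\phi(0,0)$ for every $K\geq 1$. Combining this with the previous display and letting $c\to\infty$ produces
\begin{equation*}
c_{d,1}^\phi(0,0) \leq \frac{Ac+Bd}{(d-g+1)c+d} \longrightarrow \frac{A}{d-g+1},
\end{equation*}
which is the desired bound. There is no genuine obstacle here: the one nontrivial input — non-vanishing of the relevant Gromov-Taubes invariant — is already handled by Theorem \ref{theorem:non_vanishing_taubes_gromov}, and in the genus zero case this input is completely elementary.
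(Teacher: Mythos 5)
Your proposal is correct and matches the paper's proof exactly: the paper's argument for Lemma~\ref{lem:small_spectral_gaps_for_hamiltonian_diffeomorphisms} is precisely the one-line invocation of Theorem~\ref{theorem:finiteness_for_hamiltonian_diffeomorphisms} and Definition~\ref{def:spectral_gap}. Your optional recapitulation of the proof of that theorem is also accurate, including the ECH index computation $I(Z)=2((d-g+1)c+d)$, the application of the closed curve upper bound, and the superadditivity-via-monotonicity step used to pass to the limit $c\to\infty$.
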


\begin{proof}
This is immediate from Theorem \ref{theorem:finiteness_for_hamiltonian_diffeomorphisms} and the definition of $\on{gap}_d(\phi)$.
\end{proof}

\begin{remark}
\label{remark:small_spectral_gap_rational}
It follows from monotonicity of the invariants $c_{d,k}$ and \eqref{eq:upper_bound_elementary_pfh_for_u_cyclic} that \eqref{eq:small_spectral_gaps_for_hamiltonian_diffeomorphisms} continues to hold for all $\phi$ and $d$ such that there exists a degree $d$ reference cycle $\gamma$ with the property that $HP(\phi,\gamma)$ contains $U$-cyclic classes. By \cite{CPPZ21} this is the case for all rational $\phi$ and all $d > \max\{2g-2,0\}$ such that there exists a monotone reference cycle in degree $d$. Note that for every rational $\phi$ the set of such $d$ is an infinite arithmetic progression.
\end{remark}

\begin{proof}[Proof of Theorem \ref{theorem:quantitative_closing_lemma_hamiltonian_diffeomorphisms}]
Set $d_0\coloneqq g + \lfloor Al^{-1}\delta^{-1}\rfloor$. We have
\begin{equation}
d_0-g+1 > A\l^{-1}\delta^{-1} \geq Aa^{-1}.
\end{equation}
In particular, we see that
\begin{equation}
\label{eq:quantitative_closing_lemma_hamiltonian_diffeomorphisms_proof}
d_0 > g \qquad\text{and}\qquad \frac{A}{d_0-g+1}<a\qquad\text{and}\qquad l^{-1}\frac{A}{d_0-g+1} < \delta.
\end{equation}
By Lemma \ref{lem:small_spectral_gaps_for_hamiltonian_diffeomorphisms}, we have $\on{gap}_{d_0}(\phi) \leq \frac{A}{d_0-g+1}$. In combination with \eqref{eq:quantitative_closing_lemma_hamiltonian_diffeomorphisms_proof}, this shows that $\on{gap}_{d_0}(\phi)< a$ and $l^{-1}\on{gap}_{d_0}(\phi)<\delta$. Hence Proposition \ref{prop:from_spectral_gap_to_closing_lemma} implies that there exists $\tau\in [0,\delta]$ such that $\phi_{\tau H}$ has a periodic orbit of period at most $d_0$ intersection $\MU$.
\end{proof}

\subsection{Simplicity conjecture and extension of Calabi}
\label{subsec:simplicity_conjecture_and_calabi}

In this section, we prove Theorems \ref{theorem:simplicity_conjecture} (simplicity conjecture) and \ref{theorem:calabi_extension} (extension of Calabi). In order to do so, it will be convenient to define a third family of spectral invariants
\begin{equation}
\gamma_d:\on{Ham}(\Sigma,\omega)\longrightarrow\BR
\end{equation}
taking Hamiltonian diffeomorphisms of $(\Sigma,\omega)$ as an input (cf. \cite[section 3.2]{CHS21}). The simplicity conjecture and the extension of the Calabi homomorphism follow from the basic properties of the spectral invariants $\gamma_d$ (see Theorems \ref{theorem:spectral_invariants_properties_diffeomorphisms_nonempty_boundary} and \ref{theorem:spectral_invariants_properties_diffeomorphisms_closed} below) in essentially the same way these results are deduced from the properties of PFH and Heegaard Floer spectral invariants in \cite{CHS20}, \cite{CHS21} and \cite{CHMSS21}. For the sake of completeness, we carry out versions of these arguments based on the spectral invariants $\gamma_d$. Similarly to subsection \ref{subsec:simplicity_conjecture_and_extension_of_calabi}, the surface $\Sigma$ is allowed to have boundary in the present subsection.

\begin{remark}
Strictly speaking, in the case $\Sigma=S^2$ the invariant $\gamma_d$ is defined on the universal cover $\wt{\on{Ham}}(S^2,\omega)$ and might not descend to $\on{Ham}(S^2,\omega)$. Note that if $\Sigma\neq S^2$, then $\on{Ham}(\Sigma,\omega)$ is simply connected and hence agrees with its universal cover. In the case $\Sigma=S^2$, the group $\on{Ham}(S^2,\omega)$ is homotopy equivalent to $\on{SO}(3)$. Thus $\wt{\on{Ham}}(S^2,\omega)$ is a double cover of $\on{Ham}(S^2,\omega)$. In order to simplify the discussion, we will assume in the following that $\Sigma\neq S^2$. It is not hard to extend our proof of the simplicity conjecture (Theorem \ref{theorem:simplicity_conjecture}) to the case $\Sigma=S^2$. Note that the extension of Calabi (Theorem \ref{theorem:calabi_extension}) only pertains to surfaces with non-empty boundary anyway.
\end{remark}

Let us first define $\gamma_d$ for closed surfaces $\Sigma$. Given an element $\phi\in\on{Ham}(\Sigma,\omega)$, we choose a Hamiltonian $H\in C^\infty(\BR/\BZ\times\Sigma)=C^\infty(Y_{\on{id}_\Sigma})$ such that $\phi=\phi_H^1$. We normalize $H$ such that
\begin{equation}
\label{eq:normalization_hamiltonian}
\int_{\BR/\BR\times\Sigma}H dt\wedge\omega = 0
\end{equation}
and define
\begin{equation}
\gamma_d(\phi) \coloneqq c_d^{\on{id}_\Sigma}(H,0)
\end{equation}
for every positive integer $d\geq g$. We will see in Theorem \ref{theorem:spectral_invariants_properties_diffeomorphisms_closed} below that this definition does not depend on the choice of $H$. Moreover, it will be useful to define invariants $\eta_d$ (cf.\ \cite[section 3.2.3]{CHS21}) by
\begin{equation}
\label{eq:spectral_invariant_difference_definition}
\eta_d : \on{Ham}(\Sigma,\omega)\rightarrow \BR\qquad \eta_d\coloneqq \frac{\gamma_d}{d}-\gamma_1.
\end{equation}
If $\Sigma$ has non-empty boundary, we can form a closed surface $\hat{\Sigma}$ by collapsing all boundary components to points. We may equip $\hat{\Sigma}$ with a smooth structure and an area form $\hat{\omega}$ such that the natural inclusion of the interior of $\Sigma$ into $\hat{\Sigma}$ is a smooth area-preserving embedding. Given $\phi\in\on{Ham}(\Sigma,\omega)$, we choose a Hamiltonian $H\in C^\infty(\BR/\BZ\times\Sigma)$ which is compactly supported in the interior of $\Sigma$ and generates $\phi$. Since $H$ is supported away from the boundary, it smoothly extends to a Hamiltonian $\hat{H}$ on the closed surface $\hat{\Sigma}$. We define
\begin{equation}
\gamma_d(\phi)\coloneqq c_d^{\on{id}_{\hat{\Sigma}}}(\hat{H},0).
\end{equation}
As for the closed case, we will see below that this does not depend on the choice of $H$.

\begin{theorem}
\label{theorem:spectral_invariants_properties_diffeomorphisms_nonempty_boundary}
Suppose that $\Sigma$ has non-empty boundary. Then the spectral invariants $\gamma_d$ are well-defined, i.e.\ independent of choices, and satisfy the following properties:
\begin{enumerate}
\item {\bf (Hofer Lipschitz)} The invariant $\gamma_d$ is Lipschitz continuous with respect to the Hofer metric with Lipschitz constant $d$.
\item {\bf (Monotonicity)} Let $\phi,\psi\in\on{Ham}(\Sigma)$ and assume that $\phi\leq \psi$, i.e.\ there exist Hamiltonians $H\leq G$ compactly supported in the interior of $\Sigma$ such that $\phi=\phi_H^1$ and $\psi=\phi_G^1$. Then $\gamma_d(\phi)\leq \gamma_d(\psi)$.
\item {\bf (Normalization)} We have $\gamma_d(\on{id}_\Sigma)=0$.
\item {\bf (Asymptotics)} We have
\begin{equation*}
\lim_{d\rightarrow\infty}\frac{\gamma_d(\phi)}{d}= A^{-1}\on{Cal}(\phi)
\end{equation*}
for every $\phi\in\on{Ham}(\Sigma,\omega)$.
\item {\bf (Support control)} Let $\phi\in\on{Ham}(\Sigma,\omega)$. Let $U\subset\Sigma$ be a non-empty open subset and assume that the sets $\phi^j(U)$ for $0\leq j\leq d$ are pairwise disjoint. Let $\psi\in\on{Ham}(U,\omega)$ be a compactly supported Hamiltonian diffeomorphism of $U$. Then
\begin{equation*}
\gamma_d(\phi\circ\psi) = \gamma_d(\phi).
\end{equation*}
\item {\bf ($C^0$ continuity)} $\gamma_d$ admits a unique $C^0$-continuous extension to $\ol{\on{Ham}}(\Sigma,\omega)$.
\end{enumerate}
\end{theorem}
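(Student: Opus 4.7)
The plan is to derive each property of $\gamma_d$ from the corresponding property of $c_d^{\on{id}_{\hat\Sigma}}$ established in Theorem \ref{theorem:spectral_invariants_properties_weyl}, following the template of \cite{CHS20, CHS21} for PFH spectral invariants. Finiteness of $c_d^{\on{id}_{\hat\Sigma}}$ for all $d \geq g$ is guaranteed by Theorem \ref{theorem:finiteness_for_hamiltonian_diffeomorphisms}, since $\on{id}_{\hat\Sigma}$ is a Hamiltonian diffeomorphism.

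First I would address well-definedness. If $H_1, H_2 \in C^\infty([0,1]\times\Sigma)$ are both compactly supported in $\on{int}\Sigma$ and satisfy $\phi_{H_1}^1 = \phi_{H_2}^1 = \phi$, then because $\Sigma$ has non-empty boundary the group $\on{Ham}_c(\on{int}\Sigma,\omega)$ is simply connected, so the Hamiltonian paths $\{\phi_{H_i}^t\}_{t\in[0,1]}$ are homotopic rel endpoints through compactly supported paths. Along such a homotopy $\{H_s\}$, the function $s \mapsto c_d^{\on{id}_{\hat\Sigma}}(\hat H_s, 0)$ is $C^0$-continuous by the continuity property of $c_d$, and a naturality argument (promoting the homotopy to an isotopy of the symplectization via the diffeomorphisms $f_{H_s}$ from \eqref{eq:diffeomorphism_between_symplectizations}) shows that the spectral value is invariant along the homotopy, so $c_d^{\on{id}_{\hat\Sigma}}(\hat H_1, 0) = c_d^{\on{id}_{\hat\Sigma}}(\hat H_2, 0)$.

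The properties other than support control follow from direct manipulations. Normalization is immediate from vanishing of $c_d$. Monotonicity reduces to super-additivity: if $H \leq G$, then $c_d^{\on{id}}(\hat G, 0) \geq c_d^{\on{id}}(\hat G, \hat H) + c_d^{\on{id}}(\hat H, 0)$, and the middle term is non-negative since $\hat G \geq \hat H$. Hofer Lipschitz continuity is obtained by choosing near-optimal generating Hamiltonians and using the refined Lipschitz estimate (Corollary \ref{cor:lipschitz_continuity}), exploiting the freedom to add time-dependent spatial constants to a generating Hamiltonian (which does not change the flow) in order to convert the spatial $C^0$ bound of $c_d$ into the sharper integrated Hofer bound for $\gamma_d$. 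The asymptotics property is a direct reformulation of the Weyl law applied to the pair $(\hat H, 0)$: for $H$ compactly supported in $\on{int}\Sigma$, the right-hand side equals
\begin{equation*}
A^{-1}\int_{Y_{\on{id}_{\hat\Sigma}}} \hat H\, dt\wedge\hat\omega = A^{-1}\int_0^1\!\!\int_\Sigma H_t\,\omega\,dt = A^{-1}\on{Cal}(\phi).
\end{equation*}

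The main obstacle will be support control, which is not a formal consequence of the abstract structure. My approach mirrors that of \cite{CHS20}: consider the family $\phi_s \coloneqq \phi \circ \psi_s$ where $\psi_s$ is a Hamiltonian isotopy within $\on{Ham}_c(U,\omega)$ from $\on{id}$ to $\psi$, and let $H_s$ be a corresponding family of generating Hamiltonians. The disjointness assumption on $\phi^j(U)$ for $0 \leq j \leq d$ implies that every degree-$d$ periodic orbit of $\phi_s$ either avoids $U$ entirely (hence is already an orbit of $\phi$) or is confined to $\bigsqcup_{0\leq j\leq d} \phi^j(U)$; in the second case, since $\psi_s$ is supported in $U$, the orbit decomposes into segments that can be tracked across $s$, and a careful bookkeeping of relative homology classes in $Y_{\on{id}_{\hat\Sigma}}$ shows that the relative action spectrum $\on{Spec}_d(\hat H_s, 0)$ is independent of $s$. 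Spectrality and continuity of $\gamma_d$ then force $\gamma_d(\phi_s)$ to be constant, giving $\gamma_d(\phi\circ\psi) = \gamma_d(\phi)$. Finally, $C^0$ continuity of $\gamma_d$ follows by a standard fragmentation argument: any Hamiltonian diffeomorphism that is $C^0$-close to the identity can be written as a product of Hamiltonian diffeomorphisms each supported in a small disk whose first $d$ iterates under the ambient dynamics are disjoint, whence support control plus Hofer Lipschitz control each factor, yielding the required estimate and permitting the unique extension of $\gamma_d$ to $\ol{\on{Ham}}(\Sigma,\omega)$.
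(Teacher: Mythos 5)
Your proposal follows the same overall template as the paper -- deducing everything from Theorems~\ref{theorem:spectral_invariants_properties_weyl} and~\ref{theorem:finiteness_for_hamiltonian_diffeomorphisms}, with support control handled via constancy of the relative action spectrum along a linear interpolation and $C^0$-continuity obtained by fragmentation. Normalization, monotonicity, asymptotics, and support control are handled essentially as in the paper. However, two steps are not quite right.

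For well-definedness, the ``naturality argument'' you invoke does not do what you claim. The diffeomorphisms $f_{H_s}$ are symplectomorphisms $M_{\phi_{H_s}}\to M_\phi$, but they identify curves asymptotic to orbits on $\on{gr}(\hat{H}_1)$ and $\on{gr}(\hat{H}_2)$ with curves in \emph{different} symplectizations; nothing about this identification by itself forces $c_d(\hat{H}_1,0)=c_d(\hat{H}_2,0)$, and there is no cobordism-map machinery available for the elementary invariants the way there is for PFH. What the paper actually shows is that the relative action spectrum $\on{Spec}_d(\hat{H}_s,0)$ is independent of $s$ -- this is Lemma~\ref{lem:relative_action_spectrum_homotopy_invariance}, whose proof is a nontrivial computation modeled on Oh's identity $\partial_\lambda F=\partial_t K-\{F,K\}$, not a formal naturality -- and then combines this with continuity, spectrality, and the measure-zero property of the spectrum (Lemma~\ref{lem:relative_action_spectrum_measure_zero}) to conclude constancy of $s\mapsto c_d(\hat H_s,0)$. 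You have conflated invariance of the spectrum with invariance of the spectral value, and omitted the measure-zero step that makes the deduction work.

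For Hofer Lipschitz, the device of ``adding time-dependent spatial constants to a generating Hamiltonian'' is not available here: a generating Hamiltonian $H$ must be compactly supported in $\on{int}\Sigma$, so adding a nonzero $c(t)$ destroys compact support; moreover the shift property of $c_d$ (Theorem~\ref{theorem:spectral_invariants_properties_weyl}(1)) applies to genuine constants, so a time-dependent shift changes $c_d(\hat{H},0)$ by an uncontrolled amount. The conversion from the $C^0$-Lipschitz estimate (Corollary~\ref{cor:lipschitz_continuity}) to the Hofer bound should instead go through a time-reparametrization of the generating Hamiltonian: reparametrizing $t\mapsto\rho(t)$ and replacing $F_t$ by $\rho'(t)F_{\rho(t)}$ preserves the time-$1$ map and the Hofer norm, and by choosing $\rho$ so that $\rho'(t)\on{osc}(F_{\rho(t)})$ is constant one arranges $\sup_t\on{osc}(\tilde F_t)=\|F\|$, whence the $C^0$ bound applied to $\tilde F$ (compactly supported, so $\min_x\tilde F_t\le 0\le\max_x\tilde F_t$) yields the sharp Hofer estimate.
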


\begin{theorem}
\label{theorem:spectral_invariants_properties_diffeomorphisms_closed}
Suppose that $\Sigma$ is closed. Then the spectral invariants $\gamma_d$ are well-defined, i.e.\ independent of choices, and satisfy the following properties:
\begin{enumerate}
\item {\bf (Hofer Lipschitz)} The invariant $\gamma_d$ is Lipschitz continuous with respect to the Hofer metric with Lipschitz constant $d$.
\item {\bf (Normalization)} We have $\gamma_d(\on{id}_\Sigma)=0$.
\item {\bf (Asymptotics)} We have
\begin{equation*}
\lim_{d\rightarrow\infty}\frac{\gamma_d(\phi)}{d}=0
\end{equation*}
for every $\phi\in\on{Ham}(\Sigma,\omega)$.
\item {\bf (Support control)} Let $\phi\in\on{Ham}(\Sigma,\omega)$. Let $U\subset\Sigma$ be a non-empty open subset and assume that the sets $\phi^j(U)$ for $0\leq j\leq d$ are pairwise disjoint. Let $\psi\in\on{Ham}(U,\omega)$ be a compactly supported Hamiltonian diffeomorphism of $U$. Then
\begin{equation*}
\gamma_d(\phi\circ\psi) = \gamma_d(\phi) - dA^{-1}\on{Cal}(\psi).
\end{equation*}
\item {\bf ($C^0$ continuity of differences)} The spectral invariant $\eta_d$ defined in \eqref{eq:spectral_invariant_difference_definition} admits a unique $C^0$-continuous extension to $\ol{\on{Ham}}(\Sigma,\omega)$.
\end{enumerate}
\end{theorem}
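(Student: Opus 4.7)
The plan is to deduce each property of $\gamma_d$ from the corresponding properties of the spectral invariants $c_d^{\on{id}_\Sigma}$ in Theorem \ref{theorem:spectral_invariants_properties_weyl}, together with Theorem \ref{theorem:finiteness_for_hamiltonian_diffeomorphisms} applied to $\on{id}_\Sigma$ (which ensures $c_d^{\on{id}_\Sigma}$ is finite), and a key support-control argument that I expect to be the main obstacle. To see well-definedness, since $\Sigma\neq S^2$ the group $\on{Ham}(\Sigma,\omega)$ is simply connected, so any two normalized Hamiltonians $H_0,H_1$ generating $\phi$ are joined by a path $(H_s)$ of normalized Hamiltonians with $\phi_{H_s}^1=\phi$ for all $s$. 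The function $s\mapsto c_d^{\on{id}_\Sigma}(H_s,0)$ is continuous by Lipschitz continuity. After approximating by rational diffeomorphisms and invoking spectrality, this value lies in a measure-zero subset of $\BR$ whose evolution with $s$ is determined by the fixed set of periodic orbits of $\phi$ together with the normalization that pins down actions up to a common constant; hence the value is locally constant, and therefore constant. The normalization property $\gamma_d(\on{id}_\Sigma)=0$ is then immediate from the vanishing $c_d(0,0)=0$.

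The Hofer Lipschitz property follows from the $C^0$ Lipschitz bound on $c_d$ combined with monotonicity and the shift property: if $H_\phi,H_\psi$ are normalized Hamiltonians, the oscillation estimates $c_d(H_\phi,0)\leq c_d(H_\psi,0)+d\max(H_\phi-H_\psi)$ and $c_d(H_\phi,0)\geq c_d(H_\psi,0)+d\min(H_\phi-H_\psi)$ combine, after optimizing the choice of generating Hamiltonians within their Hamiltonian isotopy classes, to yield a bound in terms of $d_H(\phi,\psi)$. The asymptotics property is a direct consequence of the Weyl law: for $H$ normalized so that its integral vanishes,
\begin{equation*}
\lim_{d\to\infty}\frac{\gamma_d(\phi)}{d}=A^{-1}\int_{Y_{\on{id}_\Sigma}}H\,dt\wedge\omega_{\on{id}_\Sigma}=0.
\end{equation*}

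Support control is the main step. Given $\phi$ and $U$ with $\phi^j(U)$ pairwise disjoint for $0\leq j\leq d$, and $\psi\in\on{Ham}(U,\omega)$, choose $K\in C^\infty([0,1]\times\Sigma)$ compactly supported in $[0,1]\times U$ with $\phi_K^1=\psi$, and a normalized Hamiltonian $H_\phi$ generating $\phi$. Then $H_\phi+K$ generates $\phi\circ\psi$ but has mean $\on{Cal}(\psi)$, so the normalized Hamiltonian for $\phi\circ\psi$ is $H_\phi+K-A^{-1}\on{Cal}(\psi)$. The shift property of $c_d$ accounts for exactly the $-dA^{-1}\on{Cal}(\psi)$ correction. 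It then suffices to prove $c_d^{\on{id}_\Sigma}(H_\phi+K,0)=c_d^{\on{id}_\Sigma}(H_\phi,0)$. For this I would use a neck-stretching argument together with the disjointness hypothesis: the disjointness of $\phi^j(U)$ for $0\leq j\leq d$ prevents a degree-$d$ orbit set of $\phi\circ\psi$ from having any periodic point inside $U$ unless it visits the full disjoint stack of translates; this forces the degree-$d$ orbit sets relevant to the minimization to coincide with those of $\phi$ and yield identical actions. Rigorously, one establishes matching upper and lower bounds: an upper bound using the closed curve upper bound of Theorem \ref{theorem:spectral_invariants_properties_basic}(9) applied to almost complex structures adapted to the support of $K$, and a lower bound via the packing and monotonicity properties. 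Controlling the minimizing holomorphic curves across the local perturbation is the main technical obstacle.

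Finally, $C^0$ continuity of the differences $\eta_d$ follows from a fragmentation argument in the spirit of \cite{CHS21}: given $\phi,\psi\in\on{Ham}(\Sigma,\omega)$ with $\phi^{-1}\psi$ sufficiently $C^0$-small, write $\phi^{-1}\psi=\chi_1\circ\cdots\circ\chi_N$ with each $\chi_i$ compactly supported in a small disk $U_i$ chosen so that $\phi^j(U_i)$ are pairwise disjoint for $0\leq j\leq d$. Applying support control to each factor and telescoping,
\begin{equation*}
\gamma_d(\psi)=\gamma_d(\phi)-dA^{-1}\sum_i\on{Cal}(\chi_i)=\gamma_d(\phi)-dA^{-1}\on{Cal}(\phi^{-1}\psi).
\end{equation*}
Applying the same identity with $d=1$ and subtracting shows $\eta_d(\psi)=\eta_d(\phi)$ for $C^0$-close $\phi,\psi$, yielding uniform $C^0$-continuity of $\eta_d$ and its extension to $\ol{\on{Ham}}(\Sigma,\omega)$.
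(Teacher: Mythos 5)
Your outline of well-definedness, Hofer Lipschitz, normalization, and asymptotics tracks the paper's reasoning. The two significant issues are in support control and in the $C^0$-continuity of $\eta_d$.

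For support control, you correctly identify the essential geometric fact (the disjointness of $\phi^j(U)$ for $0\leq j\leq d$ forces the degree-$d$ orbit sets of $\phi\circ\psi$ of period at most $d$ to coincide with those of $\phi$), but you then reach for the wrong machinery. A fresh neck-stretching argument with matching upper and lower bounds is not what is needed, and as you yourself note you cannot complete it; the paper instead uses a purely soft argument. Choose $H$ generating $\phi$ with support in time interval $(1/2,1)$ and $G$ generating $\psi$ with support in $(0,1/2)\times U$, set $K^\lambda = H + \lambda G$, and observe that by the disjointness hypothesis the set of degree-$d$ orbit sets of $\on{gr}(K^\lambda)$ is independent of $\lambda$, hence so is the relative action spectrum $\on{Spec}_d(K^\lambda,0)$. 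Continuity and spectrality of $c_d$, together with the measure-zero property of the spectrum, then force $\lambda\mapsto c_d^{\on{id}_\Sigma}(K^\lambda,0)$ to be constant. The shift property then produces the $-dA^{-1}\on{Cal}(\psi)$ correction upon re-normalizing. Note also that you need the time-separation of the supports of the two Hamiltonians for $H_\phi+K$ to actually generate $\phi\circ\psi$; as written, $\phi_{H_\phi+K}^1$ need not equal $\phi\circ\psi$.

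For $C^0$-continuity of $\eta_d$, your telescoping argument is not sound as stated. Applying support control to $\chi_2$ requires the disjointness of $(\phi\circ\chi_1)^j(U_2)$, not $\phi^j(U_2)$, and more fundamentally there is no reason a generic $\phi$ admits a single point $x$ with $\phi^j(x)$ pairwise distinct for $0\leq j\leq d$ — let alone a cover of $\Sigma$ by such disks — so the fragmentation $\phi^{-1}\psi=\chi_1\circ\cdots\circ\chi_N$ cannot be arranged to have each factor satisfy the support-control hypothesis relative to the correct composite. Your conclusion that $\eta_d(\psi)=\eta_d(\phi)$ for $C^0$-close $\phi,\psi$ is also too strong. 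The paper's argument instead proceeds by: first reducing to the case where $\chi$ has a point with distinct iterates up to period $d$ (by pre-composing with a Hofer-small $\rho$ and using Hofer Lipschitz continuity to absorb the error); then choosing a single disk $B$ with disjoint iterates and invoking the fragmentation lemma to approximate $\phi^{-1}\psi$ by a diffeomorphism supported in $B$ in Hofer norm; and finally combining the identity $\eta_d(\phi\circ\rho)=\eta_d(\phi)$ from support control with Hofer Lipschitz continuity and bi-invariance. The output is an $\epsilon$-estimate $|\eta_d(\phi)-\eta_d(\psi)|<\epsilon$, not an equality.
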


\begin{proof}[Proof of Theorem \ref{theorem:spectral_invariants_properties_diffeomorphisms_nonempty_boundary}]
We begin by showing that $\gamma_d$ is well-defined. By Theorem \ref{theorem:finiteness_for_hamiltonian_diffeomorphisms}, the spectral numbers $c_d^{\on{id}_{\hat{\Sigma}}}$ are finite. Hence $\gamma_d$ indeed takes values in $\BR$. Let $\phi\in\on{Ham}(\Sigma,\omega)$ and let $H^0$ and $H^1$ be two Hamiltonians compactly supported in the interior of $\Sigma$ generating $\phi$. Since $\on{Ham}(\Sigma,\omega)$ is simply connected, there exists a homotopy of compactly supported Hamiltonians $(H^\lambda)_{\lambda\in [0,1]}$ such that $\phi_{H^\lambda}^1=\phi$ for all $\lambda$. Let $\hat{H}^\lambda$ denote the induced Hamiltonians on $\hat{\Sigma}$. We have
\begin{equation}
\int_{\BR/\BZ\times\hat{\Sigma}} \hat{H}^\lambda dt\wedge\omega = \on{Cal}(\phi)
\end{equation}
for all $\lambda$. Thus Lemma \ref{lem:relative_action_spectrum_homotopy_invariance} implies that the relative action spectrum $\on{Spec}_d(\hat{H}^\lambda,0)$ is independent of $\lambda$. The fact the the relative action spectrum has measure zero and continuity and spectrality of the map $\lambda\mapsto c_d^{\on{id}_{\hat{\Sigma}}}(\hat{H}^\lambda,0)$ imply that it must be constant.  Hence $c_d^{\on{id}_{\hat{\Sigma}}}(\hat{H}^0,0)=c_d^{\on{id}_{\hat{\Sigma}}}(\hat{H}^1,0)$, showing that $\gamma_d$ is well-defined.\\
Hofer Lipschitz continuity of the spectral invariants $\gamma_d$ follows from Lipschitz continuity of the spectral invariants $c_d$. Monotonicity of $\gamma_d$ follows from monotonicity and non-negativity of $c_d$. The normalization property of $\gamma_d$ is a consequence of the vanishing property of $c_d$ and the asymptotic formula for $\gamma_d$ is immediate from the Weyl law for $c_d$. Next, we verify the support control property. Choose a Hamiltonian $H:\BR/\BZ\times\Sigma\rightarrow\BR$ generating $\phi$ and with support contained in $(1/2,1)\times\on{int}(\Sigma)$. Moreover, choose a Hamiltonian $G:\BR/\BZ\times\Sigma\rightarrow\BR$ generating $\psi$ with support contained in $(0,1/2)\times U$. Then the composition $\phi\circ\psi$ is generated by $H+G$. For $\lambda\in [0,1]$ we define $K^\lambda\coloneqq H+\lambda G$. Since the sets $\phi^j(U)$ for $0\leq j\leq d$ are disjoint, the set of $1$-cycles in the symplectization $M_{\on{id}_{\hat{\Sigma}}}$ which are represented by degree $d$ orbit sets of the graph $\on{gr}(\hat{K}^\lambda)$ agrees with the set of $1$-cycles represented by degree $d$ orbits sets of $\on{gr}(\hat{H})$ for all $\lambda$. This implies that the relative action spectrum $\on{Spec}_d(\hat{K}^\lambda,0)$ is independent of $\lambda$. Thus it follows from continuity and spectrality of $c_d$ that
\begin{equation}
\gamma_d(\phi\circ\psi) = c_d^{\on{id}_{\hat{\Sigma}}}(\hat{K}^1,0) = c_d^{\on{id}_{\hat{\Sigma}}}(\hat{K}^0,0) = \gamma_d(\phi).
\end{equation}
The proof of $C^0$ continuity is based on the support control property and the following lemma:

\begin{lem}
\label{lem:fragmentation}
Let $B\subset\Sigma$ be an open topological disk. For all $\epsilon>0$, there exists $\delta>0$, such that for all $\phi\in\on{Ham}(\Sigma,\omega)$ with $d_{C^0}(\phi,\on{id}_\Sigma)<\delta$, there is $\psi\in\on{Ham}(\Sigma,\omega)$ supported in $B$ such that $d_H(\phi,\psi) < \epsilon$.
\end{lem}

\begin{proof}
This lemma is proved in \cite[Lemma 3.11]{CHS21} in the case $\Sigma=S^2$. As remarked in \cite{CHMSS21}, the proof carries over to arbitrary $\Sigma$.
\end{proof}

We use these ingredients to show:
\begin{itemize}
\item[($\bullet$)] {\it For every Hamiltonian homoemorphism $\chi \in\ol{\on{Ham}}(\Sigma,\omega)$ and every $\epsilon>0$, there exists $\delta>0$ such that for all $\phi,\psi\in\on{Ham}(\Sigma,\omega)$ we have
\begin{equation}
d_{C^0}(\phi,\chi)<\delta \quad\text{and}\quad d_{C^0}(\psi,\chi)<\delta \qquad\Longrightarrow\qquad |\gamma_d(\phi)-\gamma_d(\psi)| < \epsilon.
\end{equation}}
\end{itemize}
Clearly, this statement implies that $\gamma_d$ admits a $C^0$-continuous extension to $\ol{\on{Ham}}(\Sigma,\omega)$.\\
We claim that it suffices to prove $(\bullet)$ under the following additional assumption:
\begin{itemize}
\item[($\ast$)] There exists a point $x\in\Sigma$ such that the points $\chi^j(x)$ for $0\leq j\leq d$ are pairwise distinct.
\end{itemize}
Indeed, let us assume that ($\bullet$) holds under the assumption ($\ast$). Let $\chi\in\ol{\on{Ham}}(\Sigma,\omega)$ be arbitrary and let $\epsilon>0$. Choose $\rho\in\on{Ham}(\Sigma,\omega)$ such that
\begin{equation}
\label{eq:spectral_invariants_properties_diffeomorphisms_nonempty_boundary_proof_a}
d_H(\rho,\on{id}_\Sigma) < \frac{\epsilon}{3d}
\end{equation}
and ($\ast$) holds for $\chi\circ\rho$. Thus ($\bullet$) holds for $\chi\circ\rho$ by assumption and we may pick $\delta>0$ such that for all $\phi,\psi\in\on{Ham}(\Sigma,\omega)$ satisfying $d_{C^0}(\phi,\chi\circ\rho)<\delta$ and $d_{C^0}(\psi,\chi\circ\rho)<\delta$, we have $|\gamma_d(\phi)-\gamma_d(\psi)|<\frac{\epsilon}{3}$. Now fix $\phi,\psi\in\on{Ham}(\Sigma,\omega)$ such that $d_{C^0}(\phi,\chi)<\delta$ and $d_{C^0}(\psi,\chi)<\delta$. This implies that $d_{C^0}(\phi\circ\rho,\chi\circ\rho)<\delta$ and $d_{C^0}(\psi\circ\rho,\chi\circ\rho)<\delta$ and therefore
\begin{equation}
\label{eq:spectral_invariants_properties_diffeomorphisms_nonempty_boundary_proof_b}
|\gamma_d(\phi\circ\rho)-\gamma_d(\psi\circ\rho)|<\frac{\epsilon}{3}.
\end{equation}
By the triangle inequality we have
\begin{equation}
\label{eq:spectral_invariants_properties_diffeomorphisms_nonempty_boundary_proof_c}
|\gamma_d(\phi)-\gamma_d(\psi)| \leq |\gamma_d(\phi)-\gamma_d(\phi\circ\rho)| + |\gamma_d(\phi\circ\rho)-\gamma_d(\psi\circ\rho)| +|\gamma_d(\psi)-\gamma_d(\psi\circ\rho)|.
\end{equation}
The inequality \eqref{eq:spectral_invariants_properties_diffeomorphisms_nonempty_boundary_proof_a} and Hofer Lipschitz continuity of $\gamma_d$ with Lipschitz constant $d$ imply that the first and the third term on the right hand side of \eqref{eq:spectral_invariants_properties_diffeomorphisms_nonempty_boundary_proof_c} are bounded above by $\frac{\epsilon}{3}$. By \eqref{eq:spectral_invariants_properties_diffeomorphisms_nonempty_boundary_proof_b} the second term is bounded above by $\frac{\epsilon}{3}$ as well. This shows that ($\bullet$) holds for $\chi$.\\
It remains to prove ($\bullet$) under assumption ($\ast$). Fix $\epsilon>0$ and $\chi\in\ol{\on{Ham}}(\Sigma,\omega)$ satisfying ($\ast$). By ($\ast$), we may choose an open topological disk $B\subset\Sigma$ such that the closures of $\chi^j(B)$ for $0\leq j\leq d$ are pairwise disjoint. It follows from Lemma \ref{lem:fragmentation} that there exists $\delta>0$ with the following two properties:
\begin{enumerate}
\item For all $\phi\in\on{Ham}(\Sigma,\omega)$ satisfying $d_{C^0}(\phi,\chi)<\delta$ the sets $\phi^j(B)$ for $0\leq j\leq d$ are pairwise disjoint.
\item For all $\phi,\psi\in\on{Ham}(\Sigma,\omega)$ satisfying $d_{C^0}(\phi,\chi)<\delta$ and $d_{C^0}(\psi,\chi)<\delta$ there exists $\rho\in\on{Ham}(\Sigma,\omega)$ with support contained in $B$ such that
\begin{equation}
\label{eq:spectral_invariants_properties_diffeomorphisms_nonempty_boundary_proof_d}
d_H(\rho,\phi^{-1}\psi) < \frac{\epsilon}{d}.
\end{equation}
\end{enumerate}
Given $\phi$ and $\psi$ satisfying $d_{C^0}(\phi,\chi)<\delta$ and $d_{C^0}(\psi,\chi)<\delta$, pick $\rho$ as in (2) and estimate
\begin{IEEEeqnarray}{rCl}
|\gamma_d(\phi)-\gamma_d(\psi)| & \leq & |\gamma_d(\phi)-\gamma_d(\phi\circ\rho)| + |\gamma_d(\phi\circ\rho)-\gamma_d(\phi\circ\phi^{-1}\circ\psi)|\nonumber\\
& = & |\gamma_d(\phi\circ\rho)-\gamma_d(\phi\circ\phi^{-1}\circ\psi)|\nonumber\\
& \leq & d\cdot d_H(\phi\circ\rho,\phi\circ(\phi^{-1}\circ\psi))\nonumber\\
& = & d\cdot d_H(\rho,\phi^{-1}\circ\psi)\nonumber\\
& < & \epsilon. \nonumber
\end{IEEEeqnarray}
Here the first inequality is simply the triangle inequality. The equality uses (1) and the support control property applied to $\phi$ and $\rho$. The second inequality follows from Hofer Lipschitz continuity of $\gamma_d$. The second equality uses bi-invariance of Hofer's metric and the final inequality follows from \eqref{eq:spectral_invariants_properties_diffeomorphisms_nonempty_boundary_proof_d}. This concludes the proof of ($\bullet$) and hence of the $C^0$ continuity property of the spectral invariant $\gamma_d$.
\end{proof}

\begin{proof}[Proof of Theorem \ref{theorem:spectral_invariants_properties_diffeomorphisms_closed}]
The argument proving that $\gamma_d$ is well-defined in the case of non-empty boundary carries over to the closed case almost verbatim. Hofer Lipschitz continuity of $\gamma_d$ is immediate from the corresponding property of $c_d$. Normalization of $\gamma_d$ follows from the vanishing property of $c_d$. The asymptotic formula is a consequence of the Weyl law for $c_d$ and the normalization \eqref{eq:normalization_hamiltonian} used in the definition of $\gamma_d$. Next, we check the support control property. Choose a Hamiltonian $H:\BR/\BZ\times\Sigma\rightarrow\BR$ which generates $\phi$, has support contained in $(1/2,1)\times\Sigma$ and satisfies the normalization \eqref{eq:normalization_hamiltonian}. Moreover, choose a Hamiltonian $G:\BR/\BZ\times\Sigma\rightarrow\BR$ generating $\psi$ with support contained in $(0,1/2)\times U$. Then the composition $\phi\circ\psi$ is generated by $H+G$. For $\lambda\in [0,1]$ we define $K^\lambda\coloneqq H+\lambda G$. Since the sets $\phi^j(U)$ for $0\leq j\leq d$ are disjoint, the set of $1$-cycles in the symplectization $M_{\on{id}_{\Sigma}}$ which are represented by degree $d$ orbit sets of the graph $\on{gr}(K^\lambda)$ agrees with the set of $1$-cycles represented by degree $d$ orbits sets of $\on{gr}(H)$ for all $\lambda$. This implies that the relative action spectrum $\on{Spec}_d(K^\lambda,0)$ is independent of $\lambda$. Thus it follows from continuity and spectrality of $c_d$ that
\begin{equation}
\label{eq:spectral_invariants_properties_diffeomorphisms_closed_proof_a}
c_d^{\on{id}_{\Sigma}}(H+G,0) = c_d^{\on{id}_{\Sigma}}(K^1,0) = c_d^{\on{id}_{\Sigma}}(K^0,0) = \gamma_d(\phi).
\end{equation}
Since the shift $H+G-A^{-1}\on{Cal}(\psi)$ satisfies normalization \eqref{eq:normalization_hamiltonian}, we have
\begin{IEEEeqnarray}{rCl}
\gamma_d(\phi\circ\psi)  & = & c_d^{\on{id}_\Sigma}(H+G-A^{-1}\on{Cal}(\psi),0) \nonumber\\
& = & c_d^{\on{id}_{\Sigma}}(H+G,0) - dA^{-1}\on{Cal}(\psi) \nonumber\\
& = & \gamma_d(\phi) - dA^{-1}\on{Cal}(\psi). \nonumber
\end{IEEEeqnarray}
Here the first equality uses the definition of $\gamma_d$, the second equality uses the shift property of $c_d$ and the third equality follows from \eqref{eq:spectral_invariants_properties_diffeomorphisms_closed_proof_a}. It remains to check $C^0$ continuity of $\eta_d$. The support control property of $\gamma_d$ implies that the spectral invariants $\eta_d$ satisfy $\eta_d(\phi\circ\psi) = \eta_d(\phi)$. Using this, the argument for $C^0$-continuity in the case of non-empty boundary carries over almost word for word.
\end{proof}

\begin{lem}
\label{lem:relationship_spectral_invariants_closed_nonempty_boundary}
Suppose that $\Sigma$ has non-empty boundary. Let $(\hat{\Sigma},\hat{\omega})$ be the closed surface obtained by collapsing the boundary components of $\Sigma$ as explained in the definition of $\gamma_d$. Assume that $\hat{\Sigma}\neq S^2$. Let $\phi\in\on{Ham}(\Sigma)$. Since $\phi$ is supported away form the boundary of $\Sigma$, it extends to $\hat{\phi}\in\on{Ham}(\hat{\Sigma},\hat{\omega})$. We have
\begin{equation}
\gamma_d(\hat{\phi}) = \gamma_d(\phi) - dA^{-1}\on{Cal}(\phi).
\end{equation}
\end{lem}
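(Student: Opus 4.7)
The plan is to unwind the two definitions of $\gamma_d$ and observe that they differ only by a shift of the Hamiltonian in the first argument, so the identity is a direct consequence of the shift property of $c_d$ in Theorem \ref{theorem:spectral_invariants_properties_weyl}(1).

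First I would pick a Hamiltonian $H\in C^\infty([0,1]\times\Sigma)$ compactly supported in the interior of $\Sigma$ with $\phi_H^1=\phi$. Let $\hat H\in C^\infty([0,1]\times\hat\Sigma)$ denote its smooth extension by zero. By the definition of $\gamma_d$ on $\on{Ham}(\Sigma,\omega)$,
\begin{equation*}
\gamma_d(\phi) = c_d^{\on{id}_{\hat\Sigma}}(\hat H, 0).
\end{equation*}
Since the inclusion $\on{int}(\Sigma)\hookrightarrow\hat\Sigma$ is area-preserving and $\hat\Sigma\setminus\on{int}(\Sigma)$ is a finite set of points, $(\hat\Sigma,\hat\omega)$ has total area $A$, and
\begin{equation*}
\int_{[0,1]\times\hat\Sigma}\hat H\,dt\wedge\hat\omega = \int_{[0,1]\times\Sigma}H\,dt\wedge\omega = \on{Cal}(\phi).
\end{equation*}

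Next, the flow generated by $\hat H$ on $\hat\Sigma$ is precisely $\hat\phi$, and adding a (time-independent) constant to $\hat H$ does not affect the flow. Hence $\tilde H\coloneqq \hat H-A^{-1}\on{Cal}(\phi)$ also generates $\hat\phi$ and satisfies the mean-zero normalization \eqref{eq:normalization_hamiltonian}. By the definition of $\gamma_d$ on the closed surface $\hat\Sigma$ this yields
\begin{equation*}
\gamma_d(\hat\phi) = c_d^{\on{id}_{\hat\Sigma}}(\tilde H, 0) = c_d^{\on{id}_{\hat\Sigma}}\bigl(\hat H-A^{-1}\on{Cal}(\phi),\,0\bigr).
\end{equation*}

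Finally, applying the shift property of $c_d$ with $C_+=-A^{-1}\on{Cal}(\phi)$ and $C_-=0$ gives
\begin{equation*}
c_d^{\on{id}_{\hat\Sigma}}\bigl(\hat H-A^{-1}\on{Cal}(\phi),\,0\bigr) = c_d^{\on{id}_{\hat\Sigma}}(\hat H,0) - dA^{-1}\on{Cal}(\phi) = \gamma_d(\phi) - dA^{-1}\on{Cal}(\phi),
\end{equation*}
which is the claimed formula. There is no real obstacle in this proof: the entire content of the lemma is the bookkeeping of the normalization conventions for Hamiltonians on $\Sigma$ versus on $\hat\Sigma$, and once one matches them up the shift property of $c_d$ does the rest. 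The assumption $\hat\Sigma\neq S^2$ is only used insofar as $\gamma_d$ is a priori defined on $\on{Ham}(\hat\Sigma,\hat\omega)$ itself (and not merely on its universal cover), so that the left-hand side $\gamma_d(\hat\phi)$ is literally well-defined as in Theorem \ref{theorem:spectral_invariants_properties_diffeomorphisms_closed}.
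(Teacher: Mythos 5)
Your proof is correct and follows essentially the same route as the paper: pick a compactly supported generator $H$, extend to $\hat H$, observe that $\hat H - A^{-1}\on{Cal}(\phi)$ is mean-normalized and still generates $\hat\phi$, and apply the shift property of $c_d$. The only addition is your explicit remark about why $\hat\Sigma\neq S^2$ is needed, which matches the paper's earlier discussion.
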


\begin{proof}
Let $H$ be a Hamiltonian which is compactly supported in the interior of $\Sigma$ and generates $\phi$. Let $\hat{H}$ denote the Hamiltonian on $\hat{\Sigma}$ induced by $H$. By definition, we have $\gamma_d(\phi) = c_d^{\on{id}_{\hat{\Sigma}}}(\hat{H},0)$. Define the Hamiltonian $G$ on $\hat{\Sigma}$ by
\begin{equation}
G\coloneqq \hat{H}- A^{-1}\on{Cal}(\phi).
\end{equation}
Clearly, $G$ satisfies the normalization \eqref{eq:normalization_hamiltonian} and generates $\hat{\phi}$. Thus
\begin{equation}
\gamma_d(\hat{\phi}) = c_d^{\on{id}_{\hat{\Sigma}}}(G,0) = c_d^{\on{id}_{\hat{\Sigma}}}(\hat{H},0) - dA^{-1}\on{Cal}(\phi) = \gamma_d(\phi)-dA^{-1}\on{Cal}(\phi).
\end{equation}
Here the first equality follows from the definition of $\gamma_d$, the second equality uses the shift property of the spectral invariants $c_d$ and the third equality again uses the definition of $\gamma_d$.
\end{proof}

Now we are in a position to prove the simplicity conjecture and the extension of the Calabi homomorphism.

\begin{proof}[Proof of Theorem \ref{theorem:simplicity_conjecture} (simplicity conjecture)]
Our goal is to show that the group $\on{FHomeo}(\Sigma,\omega)$ of finite energy homeomorphisms is a proper subgroup of $\ol{\on{Ham}}(\Sigma,\omega)$. Following \cite{CHS20}, we construct an explicit Hamiltonian homeomorphism $\phi\in\ol{\on{Ham}}(\Sigma,\omega)$ and use the spectral invariants $\gamma_d$ to show that it is not contained in $\on{FHomeo}(\Sigma)$. For $R>0$, let $D_R\subset\BR^2$ denote the disk of radius $R$ centered at the origin. If $R>0$ is sufficiently small, there exists an area-preserving embedding of $D_R$ into $\Sigma$. We fix such an embedding and identify $D_R$ with its image under this embedding. We choose a smooth, decreasing function $f:(0,R]\rightarrow\BR_{\geq 0}$ vanishing in a neighbourhood of $R$ such that
\begin{equation}
\label{eq:simplicity_conjecture_proof_a}
\int_0^R r^3f(r)dr = +\infty.
\end{equation}
Using polar coordinates $(r,\theta)$ on $D_R$, we define $\phi$ to be
\begin{equation}
\label{eq:simplicity_conjecture_proof_b}
\phi(r,\theta) \coloneqq (r,\theta + f(r))
\end{equation}
on $D_R$. Outside $D_R$, we set $\phi$ to be the identity. In order to check that $\phi$ is indeed a Hamiltonian homeomorphism, we choose an increasing sequence of smooth, decreasing functions $f_j:[0,R]\rightarrow\BR_{\geq 0}$ converging to $f$ pointwise on $(0,R]$ such that each $f_j$ is constant near $0$ and vanishes near $R$. We define Hamiltonian diffeomorphisms $\phi_j\in\on{Ham}(\Sigma,\omega)$ by
\begin{equation}
\phi_j(r,\theta) \coloneqq (r,\theta + f_j(r))
\end{equation}
on $D_R$ and set $\phi_j$ to be the identity outside $D_R$. Clearly, $\phi_j$ converges to $\phi$ with respect to $d_{C^0}$, showing that $\phi\in\ol{\on{Ham}}(\Sigma,\omega)$. It is an immediate consequence of Lemmas \ref{lem:infinite_twist} and \ref{lem:finiteness_on_finite_energy_homeomorphisms} below that $\phi$ is not contained in $\on{FHomeo}(\Sigma,\omega)$.
\end{proof}

\begin{lem}
\label{lem:infinite_twist}
Let $\phi\in\ol{\on{Ham}}(\Sigma,\omega)$ be the Hamiltonian homeomorphism defined in \eqref{eq:simplicity_conjecture_proof_b}. If $\Sigma$ has non-empty boundary, then
\begin{equation}
\label{eq:infinite_twist_nonempty_boundary}
\sup_{d\geq 1} \left| \frac{\gamma_d(\phi)}{d}\right| = +\infty.
\end{equation}
If $\Sigma$ is closed, then
\begin{equation}
\label{eq:infinite_twist_closed}
\sup_{d\geq 1} \left| \eta_d(\phi) \right| = +\infty.
\end{equation}
\end{lem}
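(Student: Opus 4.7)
The plan is to combine monotonicity of $\gamma_d$ in the generating Hamiltonian with the Weyl law asymptotic formula. First I would compute the Calabi of the approximations: $\phi_j$ is time-independent with radial Hamiltonian $h_j(r) = -\int_r^R s f_j(s)\, ds$, so that $h_j'(r) = r f_j(r)$ and $h_j(R) = 0$, and integration by parts gives
\begin{equation*}
\on{Cal}(\phi_j) = 2\pi \int_0^R r h_j(r)\, dr = -\pi \int_0^R r^3 f_j(r)\, dr.
\end{equation*}
Monotone convergence together with the hypothesis $\int_0^R r^3 f(r)\, dr = +\infty$ then yields $\on{Cal}(\phi_j) \to -\infty$. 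Since $f_j$ is increasing in $j$, the Hamiltonian $h_j$ is decreasing pointwise, and $\phi_j \to \phi$ in the $C^0$-topology: the displacement satisfies $|\phi_j(r,\theta) - \phi(r,\theta)| \le 2r$, so the contribution from a small disk $D_\epsilon$ is at most $2\epsilon$, while on $[\epsilon,R]$ Dini's theorem gives uniform convergence of $f_j$ to the continuous function $f$.

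For the boundary case, monotonicity of $\gamma_d$ (Theorem \ref{theorem:spectral_invariants_properties_diffeomorphisms_nonempty_boundary} (2)) applied to $h_{j+1} \le h_j$ yields $\gamma_d(\phi_{j+1}) \le \gamma_d(\phi_j)$, and $C^0$-continuity (Theorem \ref{theorem:spectral_invariants_properties_diffeomorphisms_nonempty_boundary} (6)) gives $\gamma_d(\phi) = \inf_j \gamma_d(\phi_j) \le \gamma_d(\phi_j)$ for every $j$. Applying the asymptotic formula (Theorem \ref{theorem:spectral_invariants_properties_diffeomorphisms_nonempty_boundary} (4)) to each $\phi_j$,
\begin{equation*}
\limsup_{d \to \infty} \frac{\gamma_d(\phi)}{d} \le \lim_{d \to \infty} \frac{\gamma_d(\phi_j)}{d} = A^{-1}\on{Cal}(\phi_j) \quad \text{for every } j,
\end{equation*}
and letting $j \to \infty$ forces $\gamma_d(\phi)/d \to -\infty$, which proves \eqref{eq:infinite_twist_nonempty_boundary}.

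For the closed case, the normalization built into the closed-case definition of $\gamma_d$ destroys direct monotonicity, so I would instead work with the identity $\eta_d(\phi_j) = c_d^{\on{id}_\Sigma}(h_j, 0)/d - c_1^{\on{id}_\Sigma}(h_j, 0)$ derived from the shift property of $c_d$. Monotonicity of $c_d$ in its first entry (Corollary \ref{cor:monotonicity} extended to $c_d$) together with the Weyl law (Theorem \ref{theorem:spectral_invariants_properties_weyl} (7)) permits an analogous limsup argument. The main obstacle is that both summands diverge to $-\infty$ as $j \to \infty$ while their difference converges to the finite value $\eta_d(\phi)$ by $C^0$-continuity of $\eta_d$ (Theorem \ref{theorem:spectral_invariants_properties_diffeomorphisms_closed} (5)), so the limiting manipulation requires care. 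A clean alternative is to reduce to the boundary case: remove a small disk $D \subset \Sigma$ disjoint from $D_R$ to form $\Sigma' := \Sigma \setminus D$, apply the boundary argument on $\Sigma'$ to obtain $\gamma_d^{\Sigma'}(\phi)/d \to -\infty$, and then use Lemma \ref{lem:relationship_spectral_invariants_closed_nonempty_boundary} together with the derived identity $\eta_d^{\hat{\Sigma'}}(\hat\phi) = \gamma_d^{\Sigma'}(\phi)/d - \gamma_1^{\Sigma'}(\phi)$ to transfer the conclusion to the closed surface $\hat{\Sigma'}$.
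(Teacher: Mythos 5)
Your treatment of the boundary case uses the same chain of ingredients as the paper (divergence of $\on{Cal}(\phi_j)$, monotonicity, $C^0$-continuity, the asymptotic formula), phrased as a direct divergence argument rather than a contradiction; both are fine. Note a sign slip, however: with the paper's convention $\iota_{X_H}\omega = dH$ and $\omega = r\,dr\wedge d\theta$, the autonomous Hamiltonian generating $(r,\theta)\mapsto(r,\theta+f(r))$ is $h(r)=\int_r^R sf(s)\,ds\geq 0$ (not its negative), so $\on{Cal}(\phi_j)\to +\infty$ and $\gamma_d(\phi_j)$ is \emph{increasing} in $j$. All your inequalities flip direction, but since only $|\on{Cal}(\phi_j)|\to\infty$ is needed, the boundary conclusion survives.

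The closed case, as you have written it, has a genuine gap. Your first alternative you correctly flag as unresolved: both $c_d^{\on{id}_\Sigma}(h_j,0)/d$ and $c_1^{\on{id}_\Sigma}(h_j,0)$ diverge as $j\to\infty$, and you do not carry out the required limiting argument. Your ``clean alternative'' of removing a small open disk $D$ disjoint from $D_R$ does not work: if $D$ has positive area, the boundary-collapsed surface $\hat{\Sigma}'$ of $\Sigma'=\Sigma\setminus D$ has total area $A-\on{area}(D)<A$, so $(\hat{\Sigma}',\hat{\omega}')\neq(\Sigma,\omega)$ symplectically. Lemma \ref{lem:relationship_spectral_invariants_closed_nonempty_boundary} then relates $\gamma_d^{\Sigma'}$ to $\gamma_d^{\hat{\Sigma}'}$, not to $\gamma_d^{\Sigma}$, and you end up with unboundedness of $\eta_d^{\hat{\Sigma}'}(\hat{\phi})$ rather than of the desired quantity $\eta_d^{\Sigma}(\phi)$. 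The paper instead takes the real oriented blowup of $\Sigma$ at a single point $p$ outside the support of $\phi$ (replacing $p$ with its circle of tangent directions). This produces a surface with boundary $\Sigma'$ of the \emph{same} area, with $\hat{\Sigma}'=\Sigma$ on the nose, so that Lemma \ref{lem:relationship_spectral_invariants_closed_nonempty_boundary} transfers the conclusion exactly as you intend. With that one substitution your reduction to the boundary case goes through.
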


\begin{proof}
Let us first treat the case that $\Sigma$ has non-empty boundary. It follows from a direct computation that
\begin{equation}
\int_0^R r^3 f_j(r) dr = \on{Cal}(\phi_j).
\end{equation}
Thus \eqref{eq:simplicity_conjecture_proof_a} implies that
\begin{equation}
\label{eq:infinite_twist_proof_a}
\on{Cal}(\phi_j)\rightarrow +\infty \qquad (j\rightarrow\infty).
\end{equation}
Assume by contradiction that the supremum in \eqref{eq:infinite_twist_nonempty_boundary} is equal to some finite real number $C\geq 0$. It follows from monotonicity and $C^0$-continuity in Theorem \ref{theorem:spectral_invariants_properties_diffeomorphisms_nonempty_boundary} that, for each fixed $d$, the sequence $\frac{\gamma_d(\phi_j)}{d}$ is increasing and converges to $\frac{\gamma_d(\phi)}{d}$ as $j\rightarrow\infty$. Hence $\frac{\gamma_d(\phi_j)}{d}\leq C$ for all $d$ and $j$. By the asymptotic formula in Theorem \ref{theorem:spectral_invariants_properties_diffeomorphisms_nonempty_boundary} we have
\begin{equation}
A^{-1}\on{Cal}(\phi_j) = \lim_{d\rightarrow\infty} \frac{\gamma_d(\phi_j)}{d} \leq C
\end{equation}
for all $j$. This contradicts \eqref{eq:infinite_twist_proof_a}.\\
We use Lemma \ref{lem:relationship_spectral_invariants_closed_nonempty_boundary} to reduce the closed case to the case with non-empty boundary. We pick a point $p\in\Sigma$ not contained in the support of $\phi$. Blowing this point up to a circle yields a surface $\Sigma'$ with non-empty boundary such that $\Sigma=\hat{\Sigma}'$. Let $\phi_j'\in\on{Ham}(\Sigma',\omega')$ and $\phi'\in\ol{\on{Ham}}(\Sigma',\omega')$ be the Hamiltonian diffeomorphisms/homeomorphisms induced by $\phi_j$ and $\phi$, respectively. We compute
\begin{IEEEeqnarray}{rCl}
\eta_d(\phi_j) & = & \frac{\gamma_d(\phi_j)}{d}-\gamma_1(\phi_j) \nonumber\\
& = & \frac{\gamma_d(\phi_j')-dA^{-1}\on{Cal}(\phi_j')}{d}-(\gamma_1(\phi_j')-A^{-1}\on{Cal}(\phi_j')) \nonumber\\
& = & \frac{\gamma_d(\phi_j')}{d}-\gamma_1(\phi_j'). \nonumber
\end{IEEEeqnarray}
Here the first equality simply is the definition of $\eta_d$ and the seceond equality uses Lemma \ref{lem:relationship_spectral_invariants_closed_nonempty_boundary}. Taking the limit $j\rightarrow\infty$ on both sides and using $C^0$-continuity in Theorems \ref{theorem:spectral_invariants_properties_diffeomorphisms_closed} and \ref{theorem:spectral_invariants_properties_diffeomorphisms_nonempty_boundary} yields
\begin{equation}
\label{eq:infinite_twist_proof_b}
\eta_d(\phi) = \frac{\gamma_d(\phi')}{d}-\gamma_1(\phi').
\end{equation}
The desired identity \eqref{eq:infinite_twist_closed} follows from \eqref{eq:infinite_twist_proof_b} and the case with non-empty boundary \eqref{eq:infinite_twist_nonempty_boundary}.
\end{proof}

\begin{lem}
\label{lem:finiteness_on_finite_energy_homeomorphisms}
Let $\psi\in\on{FHomeo}(\Sigma,\omega)$ be a finite energy homeomorphism. If $\Sigma$ has non-empty boundary, then
\begin{equation}
\label{eq:finiteness_on_finite_energy_homeomorphisms_nonempty_boundary}
\sup_{d\geq 1} \left| \frac{\gamma_d(\psi)}{d}\right| < +\infty.
\end{equation}
If $\Sigma$ is closed, then
\begin{equation}
\label{eq:finiteness_on_finite_energy_homeomorphisms_closed}
\sup_{d\geq 1} \left| \eta_d(\psi) \right| < +\infty.
\end{equation}
\end{lem}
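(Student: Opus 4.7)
The plan is to combine the Hofer Lipschitz property with the relevant $C^0$-continuity statements from Theorems \ref{theorem:spectral_invariants_properties_diffeomorphisms_nonempty_boundary} and \ref{theorem:spectral_invariants_properties_diffeomorphisms_closed}, both of which have already been established. The key observation is that a uniform Hofer bound on an approximating sequence gives a uniform bound on the spectral quantities at each fixed degree $d$, and this bound passes to the $C^0$-limit.

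First, by Definition \ref{def:finite_energy_homeomorphisms}, I would fix a sequence of Hamiltonians $H_j \in C^\infty([0,1]\times\Sigma)$ and a constant $C \geq 0$ such that $\|H_j\| \leq C$ for every $j$ and such that $\phi_j \coloneqq \phi_{H_j}^1$ converges to $\psi$ with respect to $d_{C^0}$. Using the normalization property $\gamma_d(\on{id}_\Sigma) = 0$ together with the Hofer Lipschitz property with constant $d$, I would then observe
\begin{equation*}
|\gamma_d(\phi_j)| \leq d \cdot d_H(\phi_j,\on{id}_\Sigma) \leq d \|H_j\| \leq dC,
\end{equation*}
so that $|\gamma_d(\phi_j)/d| \leq C$ for all $d \geq 1$ and all $j$.

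In the case of non-empty boundary, the $C^0$-continuity property of $\gamma_d$ from Theorem \ref{theorem:spectral_invariants_properties_diffeomorphisms_nonempty_boundary} gives $\gamma_d(\psi) = \lim_{j\to\infty} \gamma_d(\phi_j)$, and passing to the limit yields $|\gamma_d(\psi)/d| \leq C$ uniformly in $d$, which is \eqref{eq:finiteness_on_finite_energy_homeomorphisms_nonempty_boundary}. In the closed case, only $\eta_d$ is known to extend $C^0$-continuously, so I would instead estimate
\begin{equation*}
|\eta_d(\phi_j)| \leq \left|\frac{\gamma_d(\phi_j)}{d}\right| + |\gamma_1(\phi_j)| \leq C + C = 2C,
\end{equation*}
using the Hofer Lipschitz bound a second time for $\gamma_1$. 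Then the $C^0$-continuity of $\eta_d$ from Theorem \ref{theorem:spectral_invariants_properties_diffeomorphisms_closed} gives $\eta_d(\psi) = \lim_{j\to\infty} \eta_d(\phi_j)$, so $|\eta_d(\psi)| \leq 2C$ uniformly in $d$, establishing \eqref{eq:finiteness_on_finite_energy_homeomorphisms_closed}.

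There is essentially no obstacle here beyond correctly invoking the right continuity statement in the two cases: the Hofer bound passes automatically to the limit because each $\gamma_d$ (resp.\ $\eta_d$) is $C^0$-continuous at $\psi$, and the uniform Hofer estimate at the smooth level is exactly what Hofer Lipschitz continuity gives. The only subtlety worth emphasizing is why one must work with $\eta_d$ rather than $\gamma_d$ when $\Sigma$ is closed: in that setting $\gamma_d$ itself need not extend $C^0$-continuously, which is why Theorem \ref{theorem:spectral_invariants_properties_diffeomorphisms_closed} only asserts continuity of the difference $\eta_d = \gamma_d/d - \gamma_1$, and this is precisely the quantity the lemma bounds.
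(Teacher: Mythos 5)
Your proof is correct and follows essentially the same route as the paper's: extract a Hofer-bounded approximating sequence, use Hofer Lipschitz continuity together with $\gamma_d(\on{id}_\Sigma)=0$ to get the uniform bound $|\gamma_d(\phi_j)|\leq Cd$, and then pass to the $C^0$-limit using $C^0$-continuity of $\gamma_d$ (boundary case) or of $\eta_d$ (closed case). Your remark about why one must switch to $\eta_d$ in the closed case accurately reflects the asymmetry between Theorems \ref{theorem:spectral_invariants_properties_diffeomorphisms_nonempty_boundary} and \ref{theorem:spectral_invariants_properties_diffeomorphisms_closed}, but this is the same observation implicit in the paper's argument rather than a different approach.
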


\begin{proof}
By definition of $\on{FHomeo}(\Sigma,\omega)$, there exists a sequence $(\psi_j)_{j>0}$ in $\on{Ham}(\Sigma)$ which is bounded with respect to Hofer's metric $d_H$ and converges to $\psi$ with respect to $d_{C^0}$. Hofer Lipschitz continuity of $\gamma_d$ and the normalization property $\gamma_d(\on{id}_\Sigma)=0$ imply that there exists a constant $C>0$ such that
\begin{equation}
\label{eq:finiteness_on_finite_energy_homeomorphisms_proof_a}
|\gamma_d(\psi_j)|\leq Cd
\end{equation}
for all $d$ and $j$. In the case that $\Sigma$ has non-empty boundary, $C^0$-continuity of $\gamma_d$ implies that $\gamma_d(\psi)=\lim_{j\rightarrow\infty} \gamma_d(\psi_j)$. Thus finiteness of \eqref{eq:finiteness_on_finite_energy_homeomorphisms_nonempty_boundary} is an immediate consequence of \eqref{eq:finiteness_on_finite_energy_homeomorphisms_proof_a}. If $\Sigma$ is closed, then \eqref{eq:finiteness_on_finite_energy_homeomorphisms_proof_a} implies that $|\eta_d(\psi_j)|\leq 2C$ for all $d$ and $j$. Finiteness of \eqref{eq:finiteness_on_finite_energy_homeomorphisms_closed} follows from $C^0$-continuity of $\eta_d$.
\end{proof}

\begin{proof}[Proof of Theorem \ref{theorem:calabi_extension} (extension of Calabi)]
Let $\phi\in\on{Hameo}(\Sigma,\omega)$. By definition, we may choose a sequence of smooth Hamiltonians $H_j\in C^\infty(\BR/\BZ\times\Sigma)$ and a continuous Hamiltonian $H\in C^0(\BR/\BZ\times\Sigma)$ such that $H_j$ converges to $H$ with respect to the Hofer norm and $\phi_{H_j}^1$ converges to $\phi$ with respect to $d_{C^0}$. We define the extension of the Calabi invariant by
\begin{equation}
\label{eq:calabi_extension_proof_c}
\on{Cal}(\phi)\coloneqq \int_{\BR/\BZ\times \Sigma} H dt\wedge\omega.
\end{equation}
We need to verify that this is well-defined, i.e.\ does not depend on the choice of $H$ and $H_j$. We claim that
\begin{equation}
\int_{\BR/\BZ\times \Sigma} H dt\wedge\omega = \lim_{d\rightarrow\infty}\frac{\gamma_d(\phi)}{d}
\end{equation}
where we use the $C^0$-continuity from Theorem \ref{theorem:spectral_invariants_properties_diffeomorphisms_nonempty_boundary} to make sense of $\gamma_d(\phi)$. This clearly implies independence of choices. By the asymptotic property of $\gamma_d$ we have
\begin{equation}
\label{eq:calabi_extension_proof_a}
\lim_{d\rightarrow\infty}\frac{\gamma_d(\phi_{H_j}^1)}{d} = \on{Cal}(\phi_{H_j}^1)
\end{equation}
for every $j$. By $C^0$-continuity, we have
\begin{equation}
\label{eq:calabi_extension_proof_b}
\lim_{j\rightarrow\infty}\frac{\gamma_d(\phi_{H_j}^1)}{d} = \frac{\gamma_d(\phi)}{d}.
\end{equation}
In fact, this convergence is uniform in $d$ because $\frac{\gamma_d}{d}$ has Lipschitz constant $1$ with respect to the Hofer metric. Thus
\begin{equation}
\int_{\BR/\BZ\times \Sigma} H dt\wedge\omega =
\lim_{j\rightarrow\infty}\on{Cal}(\phi_{H_j}^1) =
\lim_{j\rightarrow\infty}\lim_{d\rightarrow\infty}\frac{\gamma_d(\phi_{H_j}^1)}{d} =
\lim_{d\rightarrow\infty}\lim_{j\rightarrow\infty}\frac{\gamma_d(\phi_{H_j}^1)}{d} =
\lim_{d\rightarrow\infty}\frac{\gamma_d(\phi)}{d}.
\end{equation}
Here the first equality uses that $H_j$ converges to $H$ in the Hofer norm, the second equality uses \eqref{eq:calabi_extension_proof_a}, the third equality uses uniformity of the convergence in \eqref{eq:calabi_extension_proof_b} and the last equality uses \eqref{eq:calabi_extension_proof_b}. This finishes our proof that the extension \eqref{eq:calabi_extension_proof_c} well-defined. It is straightforward to check that \eqref{eq:calabi_extension_proof_c} defines a homomorphism if it is well-defined (see \cite{Oh10} or \cite{CHMSS21}).
\end{proof}

\appendix

\section{Properties of the relative action spectrum}
\label{sec:properties_of_the_relative_action_spectrum}

In this section, we verify some basic properties of the relative action spectrum (Definition \ref{def:relative_action_spectrum}).

\begin{lem}
\label{lem:relative_action_spectrum_measure_zero}
The relative action spectrum is a measure zero subset of $\BR$.
\end{lem}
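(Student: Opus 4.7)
The plan is to exhibit $\on{Spec}_d(H_+,H_-)$ as a countable union of cosets of the countable subgroup $G\coloneqq\on{Im}\bigl(\langle\cdot,[\omega_\phi]\rangle\colon H_2(Y_\phi;\BZ)\to\BR\bigr)\subset\BR$. Since $H_2(Y_\phi;\BZ)$ is a countable abelian group, $G$ is countable and every coset of $G$ in $\BR$ has Lebesgue measure zero, so the claim will follow.

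First I would observe that for a fixed homologous pair of degree-$d$ orbit sets $(\alpha_+,\alpha_-)$, the subset $\{\langle Z,\Omega_\phi\rangle : Z\in H_2(M_\phi,\alpha_+,\alpha_-)\}$ is a single coset of $G$: any two such $Z$ differ by an absolute 2-cycle in $M_\phi$, and the deformation retraction $M_\phi=\BR\times Y_\phi\simeq Y_\phi=\{0\}\times Y_\phi$, along which $\Omega_\phi$ restricts to $\omega_\phi$, identifies $\langle\cdot,[\Omega_\phi]\rangle$ on $H_2(M_\phi;\BZ)$ with $\langle\cdot,[\omega_\phi]\rangle$ on $H_2(Y_\phi;\BZ)$, whose image is $G$.

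The main step is to show that only countably many such cosets arise. I would parametrize orbit sets by their homology class $[\alpha_\pm]\in H_1(Y_{\phi_{H_\pm}};\BZ)\cong H_1(Y_\phi;\BZ)$, which is countable, together with a choice of connected component of the space of degree-$d$ orbit sets in each homology class. Since this space is a subspace of a finite symmetric product of the second-countable manifold $Y_{\phi_{H_\pm}}$, it has at most countably many connected components. The key geometric claim is then that the coset depends only on these discrete data. Given two orbit sets $\alpha_+,\alpha_+'$ in the same component, I would choose a continuous path of orbit sets $(\alpha_+^\lambda)_{\lambda\in[0,1]}$ connecting them and let $b_+\subset\on{gr}(H_+)$ denote the 2-chain swept out, so that $\partial b_+=\alpha_+-\alpha_+'$. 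At each point of $b_+$ the tangent 2-plane contains the direction of the orbit of $\partial_t$ passing through that point, which under $f_{H_+}$ becomes the characteristic direction spanning the 1-dimensional kernel of the 2-form $\Omega_\phi|_{\on{gr}(H_+)}$ on the 3-manifold $\on{gr}(H_+)$. A 2-form on a 2-plane containing a kernel direction vanishes on that plane, so $\Omega_\phi|_{\on{gr}(H_+)}$ vanishes pointwise when restricted to $b_+$, giving $\langle b_+,\Omega_\phi\rangle=0$; the analogous argument handles variations of $\alpha_-$. Hence the coset in $\BR/G$ is unchanged, and only countably many cosets occur.

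The main obstacle I anticipate is making the connected-component argument rigorous when the space of orbit sets is singular or non-manifold (as can happen in the degenerate Morse--Bott situation). I would address this by stratifying orbit sets by their multiplicity pattern (only finitely many patterns per fixed degree $d$) and by the minimal period of each simple orbit ($\leq d$), and using that the fixed-point set of each iterate $\phi_{H_\pm}^k$ for $k\leq d$ is a closed subset of the second-countable manifold $\Sigma$, so it has at most countably many connected components.
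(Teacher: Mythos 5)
Your key geometric observation — that a smooth family of periodic orbits sweeps out a $2$-chain tangent everywhere to the characteristic direction, so its $\Omega_\phi$-area vanishes and the action is constant along the family — is correct and is closely related to computations the paper does elsewhere (cf.\ Lemma \ref{lem:relative_action_spectrum_homotopy_invariance}). However, the reduction to countably many cosets has a genuine gap, and it is the fix you propose in the last paragraph that fails. You assert that a closed subset of a second-countable manifold has at most countably many connected components. This is false: the Cantor set is a closed, totally disconnected, uncountable subset of $\BR$. That statement is true for \emph{open} subsets (or, more generally, for locally connected spaces), but fixed-point sets of $\phi_{H_\pm}^k$ are closed and can be as pathological as any closed set, so stratifying by multiplicity pattern and period does not rescue the argument.

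The gap is not merely technical: the strategy itself cannot succeed. Your argument, if completed, would show that $\on{Spec}_d(H_+,H_-)$ is a countable union of cosets of the countable group $G$, hence countable. But the relative action spectrum can genuinely be uncountable. For instance, take $\Sigma=T^2$, $\phi=\on{id}$, $H_-=0$, and $H_+(x,y)=h(x)$ where $h\colon S^1\to\BR$ is chosen so that $h'$ vanishes exactly on $\{0,3/4\}\cup K$ with $K\subset(1/4,3/4)$ a Cantor set, $h'>0$ on $(0,3/4)\setminus K$, and $h'<0$ on $(3/4,1)$. Then each $x_0\in K$ gives a circle of degree-$1$ fixed points at height $s=h(x_0)$ in $M_\phi$, and a direct computation of $\langle Z,\Omega_\phi\rangle$ for the obvious filling chains shows that $h(K)\subset\on{Spec}_1(H_+,0)$; since $h$ is injective on $K$, this set is uncountable. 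So no countability argument can work, and one really does need an analytic input such as Sard's theorem. The paper's proof uses exactly this: near each periodic orbit it constructs a smooth local action function $F$ whose critical points include all nearby periodic points, and invokes Sard's theorem to get measure zero for the local action differences; a compactness argument then reduces the global spectrum to finite sums of these plus a countable ambiguity.
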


\begin{proof}
It is well known that action spectra of Hamiltonian diffeomorphisms have measure zero (see e.g. \cite{Oh02}). We adapt the proof of this fact given in \cite{Oh02} to our current setting. Let $\phi$ be an area preserving diffeomorphism of $(\Sigma,\omega)$. Let $d$ be a positive integer and let $\beta$ be a $d$-periodic orbit of $\phi$, not necessarily simple. We may regard $\beta$ as an orbit in the mapping torus $Y_\phi$. Let $N\subset Y_\phi$ be a small tubular neighbourhood of $\beta$. Let $\alpha$ be any other $d$-periodic orbit contained in $N$. Then there is a unique relative homology class $Z\in H_2(N,\alpha,\beta)$ and hence a well-defined {\it local action difference} $\langle Z,\omega_\phi\rangle$. The restriction of $\omega_\phi$ to $N$ is exact, i.e.\ we may pick a primitive $1$-form $\lambda$ on $N$. In terms of $\lambda$, the local action difference between $\alpha$ and $\beta$ is give by $\int_{\alpha}\lambda-\int_{\beta}\lambda$.\\
Let $q \in\Sigma$ be a point contained in the orbit $\beta$ and pick a small open neighbourhood $U \subset \Sigma$ of $q$. Given $p \in \Sigma$, let $\gamma_p:[0,d]\rightarrow Y_\phi$ be the trajectory segment of the flow on $Y_\phi$ starting at $[0,p]\in Y_\phi$. If we choose $U$ sufficiently small, then $\gamma_p$ is contained in $N$ for all $p\in U$. We fix a trivialization of $N$, i.e.\ an identification $N \cong S^1\times \BD$. For every $p \in U$, let $\delta_p$ denote the path in $N$ from $\gamma_p(d)$ to $p$ which is a straight line segment with respect to our trivialization of $N$. Then the concatenation $\gamma_p * \delta_p$ is a loop in $N$ for every $p\in U$. We define a smooth function $F:U\rightarrow\BR$ by
\begin{equation}
F(p)\coloneqq \int_{\gamma_p * \delta_p}\lambda - \int_\beta\lambda.
\end{equation}
It follows from a direct calculation that every point $p\in U$ which is a $d$-periodic point of $\phi$ is a critical point of $F$. Moreover, the value of $F$ at such a point is the local action difference between the $d$-periodic orbit $\alpha$ corresponding to $p$ and $\beta$. By Sard's theorem, the set of critical values of $F$ has measure zero. This shows that for every periodic orbit in $Y_\phi$ the set of local action differences with nearby orbits has measure zero. The assertion of the lemma is a consequence of this fact. Indeed, Let $H_\pm\in C^\infty(Y_\phi)$ be two Hamiltonians and fix $d\geq 1$. We may pick a finite collection of periodic orbits in $Y_{\phi_{H_\pm}}$ of period at most $d$ and tubular neighbourhoods of these orbits such that the set of local action differences in these tubular neighbourhoods has measure zero and every periodic orbit of $Y_{\phi_{H_\pm}}$ of period at most $d$ is contained in one of the tubular neighbourhoods. Clearly, the set of all values in $\on{Spec}_d(H_+,H_-)$ which only involve orbits in our chosen finite collection at most countable. Any other value in $\on{Spec}_d(H_+,H_-)$ can be obtained by adding finitely many local action differences. Since countable unions of measure zero sets have measure zero, this shows that the action spectrum $\on{Spec}_d(H_+,H_-)$ is a zero measure set.
\end{proof}

\begin{lem}
\label{lem:relative_action_spectrum_homotopy_invariance}
Consider Hamiltonians $H_\pm,H_\pm'\in C^\infty(Y_\phi)$. Suppose that the arcs $(\phi_{H_\pm}^t)_{t\in [0,1]}$ and $(\phi_{H_\pm'}^t)_{t\in [0,1]}$ in $\on{Ham}(\Sigma,\omega)$ are homotopic with fixed end points. Then for every positive integer $d\geq 1$ we have
\begin{equation}
\on{Spec}_d(H_+',H_-') = \on{Spec}_d(H_+,H_-) + dA^{-1}\int_{Y_\phi}((H_+'-H_+)-(H_-'-H_-))dt\wedge\omega_\phi
\end{equation}.
\end{lem}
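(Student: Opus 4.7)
The plan is to construct, for each pair $(\alpha_+,\alpha_-)$ of degree-$d$ orbit sets in the common mapping torus $Y_{\phi_{H_\pm}}=Y_{\phi_{H_\pm'}}$ (identified via the coincidence $\phi_{H_\pm}^1=\phi_{H_\pm'}^1$), specific $2$-chains $C_\pm\subset M_\phi$ bounding $f_{H_\pm'}(\alpha_\pm)-f_{H_\pm}(\alpha_\pm)$, and to compute their $\Omega_\phi$-areas. The bijection $Z\mapsto Z+C_+-C_-$ from $H_2(M_\phi,\alpha_+^g,\alpha_-^g)$ to $H_2(M_\phi,\alpha_+^{g'},\alpha_-^{g'})$ shifts $\Omega_\phi$-pairing by $\langle C_+,\Omega_\phi\rangle-\langle C_-,\Omega_\phi\rangle$, so it suffices to show this difference equals $dA^{-1}\!\int_{Y_\phi}\bigl((H_+'-H_+)-(H_-'-H_-)\bigr)dt\wedge\omega_\phi$ modulo the period group $\Gamma:=\langle[\omega_\phi],H_2(Y_\phi;\BZ)\rangle\subset\BR$; since $\operatorname{Spec}_d$ is a union of $\Gamma$-cosets indexed by such pairs, this yields the equality of spectra.

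First, using the homotopy of arcs rel endpoints, I choose smooth families $(H_\pm^\lambda)_{\lambda\in[0,1]}$ with $H_\pm^0=H_\pm$, $H_\pm^1=H_\pm'$, and $\phi_{H_\pm^\lambda}^1=\phi_{H_\pm}^1$ for all $\lambda$. Then $\phi_{H_\pm^\lambda}$ is constant in $\lambda$, the mapping tori $Y_{\phi_{H_\pm^\lambda}}$ are canonically identified with $Y_{\phi_{H_\pm}}$, and $C_\pm$ can be defined as the image of $[0,1]\times\alpha_\pm\to M_\phi$, $(\lambda,y)\mapsto f_{H_\pm^\lambda}(0,y)$, which has the required boundary.

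Second, to compute the area I exploit that $f_{H^\lambda}:M_{\phi_H}\to M_\phi$ is symplectic for each $\lambda$. Setting $\hat F(\lambda,y):=f_{H^\lambda}(y)$ on $[0,1]\times M_{\phi_H}$, this forces the decomposition $\hat F^*\Omega_\phi=\Omega_{\phi_H}+d\lambda\wedge\iota_{Y^\lambda}\Omega_{\phi_H}$, where $Y^\lambda=(f_{H^\lambda})^{-1}_*\partial_\lambda f_{H^\lambda}$ is the closed symplectic vector field generating the isotopy. Using the explicit formula $f_{H^\lambda}(s,[t,x])=(s+H^\lambda(t,\phi_{H^\lambda}^t(x)),[t,\phi_{H^\lambda}^t(x)])$ and the standard variational formula for Hamiltonian flows (whose variation $\partial_\lambda\phi_{H^\lambda}^t$ vanishes at $t=1$ by the endpoint condition), one can read off $Y^\lambda$ explicitly; its $\partial_s$-component along $\{0\}\times Y_{\phi_H}$ is $-\partial_\lambda H^\lambda$ evaluated at $[t,\phi_{H^\lambda}^t(x)]$. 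The restricted closed 1-form $\iota_{Y^\lambda}\Omega_{\phi_H}|_{Y_{\phi_H}}$ on $Y_{\phi_H}$, integrated against the degree-$d$ cycle $\alpha_\pm$ and then against $\lambda\in[0,1]$, should produce $dA^{-1}\!\int_{Y_\phi}(H_\pm'-H_\pm)\,dt\wedge\omega_\phi$ modulo $\Gamma$.

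The main obstacle is the cohomological identification just claimed: showing that the pairing of $\iota_{Y^\lambda}\Omega_{\phi_H}|_{Y_{\phi_H}}$ with any degree-$d$ cycle in $Y_{\phi_H}$ equals the mean value $A^{-1}\!\int_{Y_\phi}\partial_\lambda H^\lambda\,dt\wedge\omega_\phi$ times $d$, modulo $\Gamma$. The endpoint condition $\partial_\lambda\phi_{H^\lambda}^1=0$ is essential: it forces the variational vector field to vanish at $t=1$ so that the cylinders $C_\pm$ close up correctly around the base circle, and geometrically this is what aligns the cohomology class of the restricted 1-form with an $[dt]$-multiple up to $\omega_{\phi_H}$-fluxes. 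A cleaner alternative route would be to first use the shift property (Lemma \ref{lem:shift_spectrum_energy}) to reduce to the balanced case $\int_{Y_\phi}(H_\pm'-H_\pm)\,dt\wedge\omega_\phi=0$, where the claim collapses to $\langle C_+,\Omega_\phi\rangle-\langle C_-,\Omega_\phi\rangle\in\Gamma$, and then produce an explicit $3$-chain in $M_\phi$ with boundary $C_+-C_-$ together with fibre 2-cycles to witness this membership.
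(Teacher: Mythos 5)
Your setup is the right one and coincides with the geometric picture underlying the paper's computation: interpolate via a family $(H_\pm^\lambda)_{\lambda\in[0,1]}$ with $\phi_{H_\pm^\lambda}^1$ constant, sweep out cylinders $C_\pm$ over the orbit sets, pull back $\Omega_\phi$ along $\hat F$ and reduce to evaluating $\int_0^1\int_{\alpha_\pm}\iota_{Y^\lambda}\Omega_{\phi_{H_\pm}}\,d\lambda$. The formula $\hat F^*\Omega_\phi=\Omega_{\phi_H}+d\lambda\wedge\iota_{Y^\lambda}\Omega_{\phi_H}$ is correct and is in effect the content of the paper's equation \eqref{eq:relative_action_spectrum_homotopy_invariance_proof_a}, which identifies this with $\int_{\gamma^\lambda}\partial_\lambda H^\lambda\,dt$.

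However, you stop exactly at the point where the argument becomes nontrivial. You state the remaining claim as an expectation (``should produce\ldots modulo $\Gamma$'') and label it ``the main obstacle'' without resolving it. That claim is in fact the entire substance of the lemma, and the route you sketch for it is dubious. You propose to identify the cohomology class of the closed $1$-form $\iota_{Y^\lambda}\Omega_{\phi_H}|_{Y_{\phi_H}}$ and pair it with ``any degree-$d$ cycle.'' But the indeterminacy of such a pairing over degree-$d$ cycles is the period group of that $1$-form on $H_1(Y_{\phi_H};\BZ)$, which has no reason to lie in $\Gamma=\langle[\omega_\phi],H_2(Y_\phi;\BZ)\rangle$; so even if you computed the class, you could not conclude the coset shift is the prescribed constant mod $\Gamma$. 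More importantly, the vanishing you need is \emph{not} a purely cohomological statement about arbitrary cycles: it uses essentially that $\alpha_\pm$ is tangent to the characteristic foliation of $\on{gr}(H^\lambda)$, which kills the ``horizontal'' part of the integrand, and a genuine dynamical identity for the ``vertical'' part.

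The paper's proof supplies exactly these missing ingredients, and they are not routine. After first using Lemma \ref{lem:shift_spectrum_energy} to reduce to Hamiltonians satisfying the strong fibrewise normalization \eqref{eq:normalization_hamiltonian_strong} (so the target shift constant becomes literally $0$), the paper shows for any smooth family of periodic orbits $\gamma^\lambda\subset\on{gr}(H^\lambda)$ that $\int_{[0,1]\times S^1}\gamma^*\Omega_\phi=0$ \emph{exactly}, not merely mod $\Gamma$. The derivative in $\lambda$ is $\int_{\gamma^\lambda}\partial_\lambda H^\lambda\,dt$ (this uses tangency of $\gamma^\lambda$ to $\ker\Omega_\phi|_{\on{gr}(H^\lambda)}$), and this integral vanishes by Oh's identity $\partial_\lambda F=\partial_t K-\{F,K\}$ for the mean-normalized generating Hamiltonians, together with the endpoint condition forcing $K(\lambda,m,\cdot)\equiv 0$ for integers $m$. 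None of this is present in your proposal, and the ``explicit $3$-chain'' alternative you mention is also not carried out. As written, the proposal correctly reduces the lemma to a computation it does not perform, so the proof is incomplete.
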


\begin{proof}
By Lemma \ref{lem:shift_spectrum_energy}, it suffices to show that
\begin{equation}
\on{Spec}_d(H_+',H_-')=\on{Spec}_d(H_+,H_-)
\end{equation}
under the additional assumption that $H_\pm$ and $H_\pm'$ are mean normalized, i.e.\ satisfy \eqref{eq:normalization_hamiltonian}. In fact, it is not hard to further reduce to the case that $H_\pm$ and $H_\pm'$ satisfy the following stronger normalization condition:
\begin{equation}
\label{eq:normalization_hamiltonian_strong}
\int_{\pi^{-1}(t)} H \omega_\phi = 0 \qquad\text{for all}\enspace t\in\BR/\BZ
\end{equation}
where $\pi:Y_\phi\rightarrow\BR/\BZ$ denotes the natural projection. Lemma \ref{lem:relative_action_spectrum_homotopy_invariance} is a straightforward consequence of the following assertion:\\
{\it Let $(H^\lambda)_{\lambda\in [0,1]}\subset C^\infty(Y_\phi)$ be a smooth family of Hamiltonians satisfying the normalization \eqref{eq:normalization_hamiltonian_strong}. Assume that the time-$1$-map $\phi_{H^\lambda}^1$ is independent of $\lambda$. Let $\gamma:[0,1]\times S^1\rightarrow M_\phi$ be a smooth map such that for every $\lambda\in [0,1]$, the loop $\gamma^\lambda\coloneqq \gamma(\lambda,\cdot)$ parametrizes a periodic orbit on $\on{gr}(H^\lambda)$. Then
\begin{equation}
\label{eq:reltaive_action_spectrum_homotopy_invariance_proof_c}
\int_{[0,1]\times S^1} \gamma^*\Omega_\phi = 0.
\end{equation}}
We prove this statement by adapting an argument from \cite{Oh05}. Let us begin by showing that
\begin{equation}
\label{eq:relative_action_spectrum_homotopy_invariance_proof_a}
\frac{\partial}{\partial\lambda} \int_{[0,\lambda]\times S^1} \gamma^*\Omega_\phi = \int_{\gamma^\lambda} \partial_\lambda H^\lambda dt.
\end{equation}
We define the vector field $Y\coloneqq \partial_\lambda\gamma$ along the map $\gamma$. There is a unique splitting $Y=Y^v + Y^h$ such that $Y^v$ is tangent to the fibres $\BR\times \{p\}$ of $M_\phi$ and $Y^h(\lambda,\cdot)$ is tangent to $\on{gr}(H^\lambda)$. We compute
\begin{equation}
\frac{\partial}{\partial\lambda} \int_{[0,\lambda]\times S^1} \gamma^*\Omega_\phi = \int_{\gamma^\lambda} \iota_Y\Omega_\phi = \int_{\gamma^\lambda} \iota_{Y^v}\Omega_\phi + \iota_{Y^h}\Omega_\phi
\end{equation}
and
\begin{equation}
\iota_{Y^v}\Omega_\phi = \iota_{Y^v} (ds\wedge dt + \omega_\phi) = (\iota_{Y^v}ds) dt.
\end{equation}
Since the loop $\gamma^\lambda$ parametrizes a periodic orbit in $\on{gr}(H^\lambda)$, it is tangent to $\ker \Omega_\phi|_{\on{gr}(H^\lambda)}$. This implies that 
\begin{equation}
\int_{\gamma^\lambda}\iota_{Y^h}\Omega_\phi = 0
\end{equation}
and we obtain
\begin{equation}
\label{eq:relative_action_spectrum_homotopy_invariance_proof_d}
\frac{\partial}{\partial\lambda} \int_{[0,\lambda]\times S^1} \gamma^*\Omega_\phi = \int_{\gamma^\lambda}(\iota_{Y^v}ds) dt.
\end{equation}
Since the image of $\gamma^\lambda$ is contained in $\on{gr}(H^\lambda)$, we have
\begin{equation}
H^\lambda(p\circ\gamma^\lambda) = s\circ\gamma^\lambda
\end{equation}
where $p:M_\phi=\BR\times Y_\phi\rightarrow Y_\phi$ denotes the natural projection. Differentiating with respect to $\lambda$ yields
\begin{equation}
\label{eq:relative_action_spectrum_homotopy_invariance_proof_e}
\partial_\lambda H^\lambda(p\circ\gamma^\lambda) + dH^\lambda(p\circ\gamma^\lambda) \circ dp (Y) = ds(Y).
\end{equation}
Since $Y^h$ is tangent to $\on{gr}(H^\lambda)$, we have
\begin{equation}
\label{eq:relative_action_spectrum_homotopy_invariance_proof_f}
dH^\lambda(p\circ\gamma^\lambda)\circ dp (Y^h) = ds(Y^h).
\end{equation}
Subtracting \eqref{eq:relative_action_spectrum_homotopy_invariance_proof_f} from \eqref{eq:relative_action_spectrum_homotopy_invariance_proof_e} yields
\begin{equation}
\partial_\lambda H^\lambda(p\circ\gamma^\lambda) + dH^\lambda(p\circ\gamma^\lambda) \circ dp (Y^v)= ds(Y^v).
\end{equation}
We have $dp(Y^v)=0$ and hence
\begin{equation}
\partial_\lambda H^\lambda(p\circ\gamma^\lambda) = \iota_{Y^v}ds.
\end{equation}
Plugging this into \eqref{eq:relative_action_spectrum_homotopy_invariance_proof_d} yields \eqref{eq:relative_action_spectrum_homotopy_invariance_proof_a}.\\
Next, we prove that the right hand side of \eqref{eq:relative_action_spectrum_homotopy_invariance_proof_a} vanishes. This clearly implies \eqref{eq:reltaive_action_spectrum_homotopy_invariance_proof_c}. Following \cite{Oh05}, we define the family of Hamiltonian diffeomorphisms $\phi_t^\lambda\coloneqq \phi_{H^\lambda}^t\in\on{Ham}(\Sigma,\omega)$ parametrized by $t\in\BR$ and $\lambda\in [0,1]$. This gives rise to families of Hamiltonian vector fields on $(\Sigma,\omega)$ defined by
\begin{equation}
X_t^\lambda \coloneqq (\partial_t\phi_t^\lambda)\circ(\phi_t^\lambda)^{-1}\qquad\text{and}\qquad Y_t^\lambda \coloneqq (\partial_\lambda\phi_t^\lambda)\circ(\phi_t^\lambda)^{-1}.
\end{equation}
Let $F(\lambda,t,\cdot)$ and $K(\lambda,t,\cdot)$ be the unique mean normalized Hamiltonians on $(\Sigma,\omega)$ generating $X_t^\lambda$ and $Y_t^\lambda$, respectively. It is proved in \cite{Oh05} that $F$ and $K$ satisfy the key identity
\begin{equation}
\label{eq:relative_action_spectrum_homotopy_invariance_proof_b}
\partial_\lambda F = \partial_t K - \{F,K\}
\end{equation}
where $\{F,K\}=\omega(X_F,X_K)$ denotes the Poisson bracket. We use \eqref{eq:relative_action_spectrum_homotopy_invariance_proof_b} to evaluate the right hand side of \eqref{eq:relative_action_spectrum_homotopy_invariance_proof_a}. Let $d$ denote the period of $\gamma^\lambda$ and let $p\in\Sigma$ be a periodic point of $\phi_{H^\lambda}$ corresponding to the oribt $\gamma^\lambda$. Clearly, $F(\lambda,t,\cdot)=H^\lambda(\on{pr}(t,\cdot))$ where $\on{pr}:\BR\times\Sigma\rightarrow Y_\phi$ is the natural projection. Thus
\begin{IEEEeqnarray}{rCl}
\int_{\gamma^\lambda} \partial_\lambda H^\lambda dt  & = & \int_0^d (\partial_\lambda F)(\lambda,t,\phi_t^\lambda(p)) dt \nonumber\\
& = & \int_0^d \left((\partial_t K)(\lambda,t,\phi_t^\lambda(p)) + dK(\lambda,t,\phi_t^\lambda)X_t^\lambda\right) dt \nonumber\\
& = & \int_0^d \partial_t (K(\lambda,t,\phi_t^\lambda(p))) dt \nonumber\\
& = & K(\lambda,d,\phi_d^\lambda(p)) - K(\lambda,0,\phi_0^\lambda(p)) \nonumber\\
& = & 0.\nonumber
\end{IEEEeqnarray}
Here the second equality uses \eqref{eq:relative_action_spectrum_homotopy_invariance_proof_b}. The last equality uses the fact that $K(\lambda,m,\cdot)$ vanishes identically for every integer $m\in\BZ$ because $\phi_m^\lambda$ is independent of $\lambda$ by assumption. This concludes our proof of the lemma.
\end{proof}

\begin{lem}
\label{lem:relative_action_spectrum_closed}
Suppose that $\phi$ is rational (Definition \ref{def:rational_diffeomorphism}). Then the relative action spectrum $\on{Spec}_d(H_+,H_-)$ is a closed subset of $\BR$ for all $d\geq 1$ and all pairs of Hamiltonians $H_\pm$.
\end{lem}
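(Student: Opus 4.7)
The plan is to reduce the claim to a discreteness statement for fixed asymptotic orbit sets, and then use Arzelà--Ascoli compactness for the space of degree-$d$ orbit sets to absorb the dependence on $\alpha_\pm$. The starting observation is that rationality of $\phi$ means the image $\Lambda \coloneqq \langle H_2(Y_\phi;\BZ),[\omega_\phi]\rangle \subset \BR$ is a discrete subgroup of $\BR$, i.e.\ either $\{0\}$ or $\lambda\BZ$ for some $\lambda>0$. Moreover, on $M_\phi = \BR\times Y_\phi$ the form $ds\wedge dt = d(s\,dt)$ is exact (since $dt$ is closed on $Y_\phi$), so $[\Omega_\phi]=[\omega_\phi]$ under the isomorphism $H^2(M_\phi;\BR)\cong H^2(Y_\phi;\BR)$. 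Consequently, for any fixed orbit sets $\alpha_\pm$ with $H_2(M_\phi,\alpha_+,\alpha_-)\neq\emptyset$, the set
\[
\{\langle Z,\Omega_\phi\rangle : Z\in H_2(M_\phi,\alpha_+,\alpha_-)\}
\]
is a single coset of $\Lambda$ in $\BR$, in particular a discrete subset of $\BR$.

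Next I would take a sequence $a_n \to a$ with $a_n \in \on{Spec}_d(H_+,H_-)$, and write $a_n = \langle Z_n,\Omega_\phi\rangle$ with $Z_n\in H_2(M_\phi,\alpha_+^n,\alpha_-^n)$ for degree-$d$ orbit sets $\alpha_\pm^n$ of $Y_{\phi_{H_\pm}}$. Since each simple orbit has period bounded by $d$ and is an integral curve of the fixed smooth vector field $\partial_t$, Arzelà--Ascoli implies that the space of parametrized orbits of period at most $d$ is sequentially precompact in $C^0$. The partition data $d = \sum m_i k_i$ takes only finitely many values, so after passing to a subsequence I may assume the simple components of $\alpha_\pm^n$ (with their multiplicities, and allowing distinct components to coalesce in the limit) converge in $C^0$ to degree-$d$ orbit sets $\alpha_\pm$ of $Y_{\phi_{H_\pm}}$.

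The main technical step, and the one I expect to be the principal obstacle, is to construct $2$-chains $C_\pm^n \in H_2(M_\phi,\alpha_\pm^n,\alpha_\pm)$ with $\langle C_\pm^n,\Omega_\phi\rangle \to 0$. The idea is to work inside a fixed tubular neighbourhood $N_\pm$ of the support of $\alpha_\pm$ in $\on{gr}(H_\pm)\subset M_\phi$; for $n$ large enough the support of $\alpha_\pm^n$ lies in $N_\pm$, and a straight-line (ruled) interpolation in tubular coordinates between matched parametrized components produces a $2$-chain whose $\Omega_\phi$-area is bounded by a universal constant times the $C^0$-distance between $\alpha_\pm^n$ and $\alpha_\pm$. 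The subtlety is the bookkeeping when several simple components of $\alpha_\pm^n$ coalesce to one simple component of $\alpha_\pm$: one must pass to a common parametrizing source (a disjoint union of circles matching multiplicities on either side) before interpolating, so that the resulting chain has boundary exactly $\alpha_\pm^n - \alpha_\pm$ in $H_1$.

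Granted such $C_\pm^n$, set $Z_n' \coloneqq Z_n - C_+^n + C_-^n \in H_2(M_\phi,\alpha_+,\alpha_-)$. Then
\[
\langle Z_n',\Omega_\phi\rangle = a_n - \langle C_+^n,\Omega_\phi\rangle + \langle C_-^n,\Omega_\phi\rangle \longrightarrow a.
\]
By the first paragraph, the left-hand side lies in a fixed coset of the discrete subgroup $\Lambda\subset\BR$; a convergent sequence in a discrete set is eventually constant, so its limit $a$ lies in the coset and hence in $\on{Spec}_d(H_+,H_-)$. This proves that $\on{Spec}_d(H_+,H_-)$ is closed.
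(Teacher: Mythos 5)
Your argument is essentially identical to the paper's: pass to a convergent subsequence of degree-$d$ orbit sets, modify each $Z_n$ by interpolating chains supported in small tubular neighbourhoods whose $\Omega_\phi$-area tends to zero, and invoke discreteness of the image of $\langle\,\cdot\,,\Omega_\phi\rangle$ (which follows from rationality) to conclude the corrected sequence is eventually constant. The only difference is that you spell out a few steps the paper leaves implicit (Arzel\`a--Ascoli for the orbit sets, exactness of $ds\wedge dt$ giving $[\Omega_\phi]=[\omega_\phi]$, and the coalescing-components bookkeeping in the interpolation), but the structure and the key discreteness-plus-compactness mechanism are the same.
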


\begin{proof}
Let $(\alpha_\pm^j)_j$ be sequences of orbit sets of $Y_{\phi_{H_\pm}}$ of degree $d$. Moreover, let $Z^j\in H_2(M_\phi,\alpha_+^j,\alpha_-^j)$ be a sequence of relative homology classes. Assume that the sequence $\langle Z^j,\Omega_\phi\rangle$ converges to a real number $A$. We need to show that $A\in\on{Spec}_d(H_+,H_-)$. After passing to a subsequence, we may assume that the sequences $\alpha_\pm^j$ converge to orbit sets $\alpha_\pm$. For $j$ sufficiently large, there are unique choices of relative homology classes $Z_\pm^j\in H_2(M_\phi,\alpha_\pm,\alpha_\pm^j)$ which are represented by chains contained in small tubular neighbourhoods of $\alpha_\pm$. We define $W^j\coloneqq Z_+^j+Z^j-Z_-^j\in H_2(M_\phi,\alpha_+,\alpha_-)$. Since $\alpha_\pm^j$ converge to $\alpha_\pm$, the sequence $\langle W^j,\Omega_\phi\rangle$ converges to $A$. We claim that in fact $\langle W^j,\Omega_\phi\rangle=A$ for $j$ sufficiently large. Indeed, rationality of $\phi$ implies that the image of $\langle \cdot,\Omega_\phi\rangle:H_2(M_\phi;\BZ)\rightarrow\BR$ is discrete. Thus the same is true for the image of $\langle \cdot,\Omega_\phi\rangle:H_2(M_\phi,\alpha_+\alpha_-)\rightarrow\BR$. A convergent sequence in a discrete set must be eventually constant, showing the claim and hence that $A$ is contained in the relative action spectrum.
\end{proof}

\section{Computation of Gromov-Taubes invariants}
\label{sec:computation_of_taubes_gromov_invariants}

\subsection{Review of Gromov-Taubes invariants}
\label{subsec:review_taubes_gromov_invariants}

Let $(X^4,\Omega)$ be a closed symplectic $4$-manifold. The Gromov-Taubes invariant introduced by Taubes in \cite{Tau96} is a function
\begin{equation}
\on{Gr}(X,\Omega;\cdot): H_2(X;\BZ)\rightarrow\BZ
\end{equation}
defined by counting pseudo-holomorphic submanifolds of $X$. Let $Z\in H_2(X;\BZ)$ be a homology class. The ECH index $I(Z)$ is defined by
\begin{equation}
I(Z)\coloneqq \langle Z,c_1(TX)\rangle + Z\cdot Z.
\end{equation}
It is always an even integer. If it is negative, $\on{Gr}(X,\Omega;Z)$ is defined to be zero. Assume that $I(Z)$ is non-negative and write $I(Z)=2k$ for a non-negative integer $k$. Let $P\subset X$ be a set of $k$ distinct points and let $J$ be a $\Omega$-compatible almost complex structure on $X$. The invariant $\on{Gr}(X,\Omega;Z)$ is defined to be a signed count of $J$-holomorphic submanifolds $C$ of $X$ passing through the points $P$ and representing the homology class $Z$. It is proved in \cite{Tau96} that this count is finite for generic $(J,P)$ and independent of choices. The following is a consequence of Gromov compactness.

\begin{lem}
\label{lem:gromov_taubes_nonvanishing_implies_curves}
Let $Z\in H_2(X;\BZ)$ and assume that $\on{Gr}(X,\Omega;Z)$ does not vanish. Then for every compatible almost complex structure $J$ on $X$ and every collection $P$ of $k = I(Z)/2$ distinct points in $X$, there exists a $J$-holomorphic curve passing through $P$ and representing $Z$.
\end{lem}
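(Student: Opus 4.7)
The plan is to reduce the statement to the generic case via an approximation argument and then invoke Gromov compactness. By the definition of $\on{Gr}(X,\Omega;Z)$ recalled in \S\ref{subsec:review_taubes_gromov_invariants}, for a generic pair $(J',P')$ the Gromov--Taubes invariant equals a finite signed count of $J'$-holomorphic submanifolds through $P'$ representing $Z$. Since $\on{Gr}(X,\Omega;Z)\neq 0$, this count cannot be empty, so at least one such submanifold exists.

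Next, given an arbitrary compatible $J$ and an arbitrary set $P$ of $k=I(Z)/2$ distinct points, I would choose sequences $J_n\to J$ in $C^\infty$ and $P_n\to P$ with each pair $(J_n,P_n)$ generic in the sense above. For every $n$ this yields a closed, embedded $J_n$-holomorphic submanifold $C_n\subset X$ passing through $P_n$ and representing $Z$.

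The main step is then to extract a limit of the sequence $C_n$. The symplectic area
\begin{equation*}
\int_{C_n}\Omega = \langle Z,[\Omega]\rangle
\end{equation*}
is fixed, giving a uniform energy bound. The adjunction formula, together with the fact that the $C_n$ all realize the same homology class $Z$, bounds the genus of $C_n$ uniformly. Under these two bounds, Gromov's compactness theorem in a closed $4$-manifold (see e.g.\ \cite{Tau98} or the version used in the neck-stretching arguments of \S\ref{subsec:neckstretching}) produces, after passing to a subsequence, a limit stable $J$-holomorphic map $C$ whose image is a compact $J$-holomorphic curve in $X$. Since Gromov convergence is in particular convergence as currents, $[C]=Z$ in $H_2(X;\BZ)$. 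Since each $C_n$ passes through $P_n$ and $P_n\to P$, the closedness of the incidence condition under Gromov convergence (points sit on the image of $C$) forces $C$ to pass through $P$.

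The only delicate point is that the limit $C$ may be disconnected, nodal, and contain multiply covered components; however the lemma as stated only asks for a $J$-holomorphic curve through $P$ representing $Z$, and no simplicity or embeddedness is required. I expect no further difficulty, since the genus bound from adjunction is precisely what allows us to use ordinary Gromov compactness rather than the more subtle currents-level compactness of \cite{Tau98} that appears in the monotonicity proof.
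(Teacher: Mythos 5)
The paper itself gives no written proof of this lemma; it only remarks just before the statement that ``The following is a consequence of Gromov compactness.'' Your proposal supplies precisely the standard argument that remark is alluding to: approximate $(J,P)$ by generic pairs $(J_n,P_n)$ for which the nonvanishing of $\on{Gr}(X,\Omega;Z)$ guarantees a $J_n$-holomorphic submanifold $C_n$ through $P_n$ in class $Z$; observe that the area $\langle Z,[\Omega]\rangle$ is fixed and that the adjunction formula (together with a lower bound on the area of any nonconstant component, which bounds the number of components) gives a uniform genus bound; apply Gromov compactness to extract a limit $J$-holomorphic curve, which by convergence as currents represents $Z$ and, by closedness of the incidence condition, passes through $P$. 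This is exactly the intended route, and your closing observation that the lemma asks for neither simplicity nor embeddedness of the limit is the correct reason why no further regularity work is needed.
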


\begin{remark}
\label{remark:ignore_multiply_covered_tori}
Connected components of $C$ which are tori with trivial normal bundle are allowed to have positive integer multiplicities. Such components do not show up in our computations in subsection \ref{subsec:computations_without_seiberg_witten_theory}. We refer to \cite{Tau96} for details on how such multiply covered tori enter the count and do not further elaborate on this point.
\end{remark}

Suppose that $C\subset X$ is a $J$-holomorphic submanifold passing through $k$ points $P$. Moreover, assume that no component of $C$ is multiply covered (see Remark \ref{remark:ignore_multiply_covered_tori}). Let $N$ denote the normal bundle of $C$. Then there is a deformation operator
\begin{equation}
D_C : C^\infty(N)\rightarrow C^\infty (\Lambda^{0,1}T^*C\otimes N)
\end{equation}
which, roughly speaking, measures the failure of an infinitesimal deformation of $C$ to be $J$-holomorphic. Write $P=\{p_1,\dots,p_k\}$ and consider the composite operator
\begin{equation}
D_C\oplus \on{ev}_P : C^\infty(N)\rightarrow C^\infty (\Lambda^{0,1}T^*C\otimes N) \oplus \bigoplus\limits_{j=1}^k N_{p_j} \qquad s\mapsto (D_Cs,s(p_1),\dots s(p_k)).
\end{equation}
The operator $D_C\oplus \on{ev}_P$ is a Fredholm operator of index $0$ between suitable Sobolev completions. The submanifold $C$ is called {\it non-degenerate} if the operator $D_C\oplus\on{ev}_P$ is invertible (see \cite[Definition 2.1]{Tau96}). If $C$ is non-degenerate, it enters the count with sign $\pm 1$. This sign is determined as follows: Choose a path $(D^\lambda)_{\lambda\in [0,1]}$ of Cauchy-Riemann operators such that $D^0=D_C$, the operator $D^1$ is complex linear, and $D^1\oplus\on{ev}_P$ is invertible. Then the sign of $C$ is given by the mod-2 spectral flow of $(D^\lambda)_{\lambda\in [0,1]}$. In particular, the sign is $+1$ if $D_C$ happens to be complex linear, which is for example the case if $J$ is integrable.

\subsection{Computations without Seiberg-Witten theory}
\label{subsec:computations_without_seiberg_witten_theory}

In this section we perform elementary computations of Gromov-Taubes invariants which do not appeal to Seiberg-Witten theory.

\begin{theorem}
\label{theorem:taubes_gromov_projective_space}
Let $\Omega$ denote the Fubini-Study symplectic form on $\BC P^2$. Then
\begin{equation}
\on{Gr}(\BC P^2,\Omega;d\cdot [\BC P^1]) =
\begin{cases}
1 & \text{if}\enspace d\geq 0 \\
0 & \text{otherwise}.
\end{cases}
\end{equation}
\end{theorem}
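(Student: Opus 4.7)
The plan is to compute everything using the standard integrable complex structure on $\BC P^2$, reducing the count of pseudo-holomorphic submanifolds to a count of plane algebraic curves. First I would record that $I(d\cdot[\BC P^1]) = 3d + d^2 = d(d+3)$, using $c_1(T\BC P^2) = 3\on{PD}([\BC P^1])$ and $[\BC P^1]\cdot[\BC P^1] = 1$, so the number of point constraints is $k = d(d+3)/2$ whenever $I(Z)\geq 0$. The negative degree cases are then immediate: for $d\in\{-1,-2\}$ the ECH index is negative so $\on{Gr}=0$ by definition, while for $d\leq -3$ no pseudo-holomorphic submanifold can represent $Z$ because $\langle Z,[\Omega]\rangle = d\cdot\on{area}(\BC P^1) < 0$, forcing $\on{Gr}=0$. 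The case $d=0$ is the standard normalization $\on{Gr}(X,\Omega;0)=1$.

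The substantive part is $d\geq 1$. I would take $J_0$ to be the integrable Fubini-Study complex structure. Then a $J_0$-holomorphic submanifold representing $d[\BC P^1]$ is simply a plane algebraic curve of degree $d$. Such curves form the projective space $\mathbb{P}(H^0(\MO_{\BC P^2}(d)))$ of complex dimension $\binom{d+2}{2}-1=k$, and containing a point $p$ imposes a single linear condition on the defining polynomial. For a generic configuration $P$ of $k$ points these $k$ conditions are independent, so there is a \emph{unique} degree $d$ curve $C\subset\BC P^2$ with $P\subset C$. A standard dimension count on the incidence variety in $\mathbb{P}(H^0(\MO_{\BC P^2}(d))) \times (\BC P^2)^k$ shows that the loci where $C$ is reducible or singular project to subsets of dimension strictly less than $2k$ in $(\BC P^2)^k$, so for generic $P$ the curve $C$ is smooth, irreducible, and embedded.

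To finish I would verify non-degeneracy and compute the sign. Because $J_0$ is integrable, the deformation operator $D_C$ coincides with the holomorphic Cauchy-Riemann operator on the normal bundle $N_C$, hence is complex linear. Adjunction gives $N_C\cong\MO_C(d)$ and $K_C\cong\MO_C(d-3)$, so Serre duality forces $H^1(C,N_C)\cong H^0(C,\MO_C(-3))^* = 0$, while Riemann-Roch computes $\dim H^0(C,N_C) = d^2 + 1 - (d-1)(d-2)/2 = k$. For generic $P$ the evaluation map $H^0(C,N_C)\to\bigoplus_j(N_C)_{p_j}$ is therefore an isomorphism, and $D_C\oplus\on{ev}_P$ is invertible with sign $+1$ because $D_C$ is complex linear. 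The main obstacle I anticipate is genericity bookkeeping: one must select a single $P$ for which $C$ is simultaneously smooth, irreducible, \emph{and} cut out transversally by $D_C\oplus\on{ev}_P$. Since each of these conditions defines an open dense subset of $(\BC P^2)^k$ by the dimension count above and the Riemann-Roch identity, their intersection is nonempty, and any $P$ in it yields $\on{Gr}(\BC P^2,\Omega;d[\BC P^1])=1$.
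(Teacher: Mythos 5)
Your proposal is correct and follows essentially the same route as the paper: work with the integrable complex structure, identify degree-$d$ curves with $\mathbb{P}(H^0(\MO(d)))\cong\BC P^k$, do a dimension count on the incidence variety in $\mathbb{P}(H^0(\MO(d)))\times(\BC P^2)^k$ to get a unique smooth irreducible curve through $k$ generic points, and deduce non-degeneracy and sign $+1$ from complex linearity of $D_C$. Your use of Serre duality ($\deg(K_C\otimes N_C^*)=-3d<0$) is equivalent to the paper's citation of automatic transversality; the one step you omit is the Gromov-compactness argument the paper uses to justify that the count at the integrable pair $(i,P)$ actually computes $\on{Gr}$ -- namely that for $(J,Q)$ in a small neighbourhood of $(i,P)$ no additional $J$-holomorphic currents appear -- but this does not affect the substance of your argument.
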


\begin{theorem}
\label{theorem:taubes_gromov_sphere_cross_sphere}
Let $\Omega$ be a product symplectic form on $S^2\times S^2$. Then
\begin{equation}
\on{Gr}(S^2\times S^2,\Omega;c\cdot[S^2\times *] + d\cdot [*\times S^2]) =
\begin{cases}
1 & \text{if}\enspace c,d\geq 0 \\
0 & \text{otherwise}.
\end{cases}
\end{equation}
\end{theorem}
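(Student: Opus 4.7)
The plan is to exploit invariance of $\on{Gr}$ under deformation of the compatible almost complex structure and compute with the integrable product complex structure $J_0 = j\oplus j$ on $S^2\times S^2 \cong \BC P^1\times\BC P^1$. Under $J_0$ both horizontal fibers $S^2\times\{q\}$ and vertical fibers $\{p\}\times S^2$ are $J_0$-holomorphic and represent $[S^2\times *]$ and $[*\times S^2]$ respectively.

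For the vanishing statement, by the symmetry of the two factors it suffices to treat $c<0$. Suppose a $J_0$-holomorphic representative $C$ of $Z$ exists and decompose $C = \bigl(\bigcup_i F_i\bigr) \cup C'$ where the $F_i$ are the vertical-fiber components of $C$ and $C'$ contains no vertical fiber. Then $[C']$ still has horizontal degree $c$, so for a generic vertical fiber $V$ disjoint from all $F_i$ we get $c = C' \cdot V \geq 0$ by positivity of intersections, a contradiction.

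For the non-negative case with $(c,d)\neq(0,0)$, the linear system of effective divisors of bidegree $(c,d)$ in $\BC P^1\times\BC P^1$ is the projectivization of bihomogeneous polynomials of bidegree $(c,d)$, a projective space of dimension $k = (c+1)(d+1)-1 = I(Z)/2$. Vanishing at a given point is one linear condition, so through $k$ generic points there is a unique divisor $C$. For a generic point configuration, when $c,d\geq 1$ this $C$ is smooth and irreducible of genus $(c-1)(d-1)$; when $c=0$ (resp.\ $d=0$) it is a disjoint union of $d$ distinct vertical fibers (resp.\ $c$ distinct horizontal fibers). The only class in $H_2(S^2\times S^2;\BZ)$ that supports a torus with trivial normal bundle satisfies $Z\cdot Z = 0$ and $\langle c_1,Z\rangle=0$, which together force $Z=0$; hence no component of $C$ is a multiply-covered torus and $C$ is a bona fide Gromov-Taubes representative without multiplicities. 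Integrability of $J_0$ forces the deformation operator $D_C$ to be complex linear, so the signed contribution of $C$ to $\on{Gr}$ is $+1$ as soon as $D_C \oplus \on{ev}_P$ is invertible. The degenerate case $c=d=0$ is handled by the convention $\on{Gr}(X,0) = 1$.

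The hard part will be the invertibility of $D_C \oplus \on{ev}_P$, which must be verified uniformly across the smooth irreducible regime and the boundary cases $c=0$ or $d=0$ where $C$ is reducible. The cleanest route is algebro-geometric: algebraic deformations of $C$ are parametrized by the $(c+1)(d+1)$-dimensional space of bihomogeneous polynomials of bidegree $(c,d)$, and the evaluation map to $\BC^k$ at $k$ generic points is surjective by linear independence of bidegree $(c,d)$ monomials at generic points. Since pseudo-holomorphic and algebraic deformations of $C$ coincide when $J_0$ is integrable, this linear-algebra surjectivity translates into the required nondegeneracy of $D_C \oplus \on{ev}_P$ uniformly in all cases, and closes the argument.
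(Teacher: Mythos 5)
Your approach matches the paper's closely: compute with the integrable complex structure $i$ on $\BC P^1\times\BC P^1$, identify $i$-holomorphic representatives of $Z$ with the linear system $\BP H^0(\pi_1^*\MO(d)\otimes\pi_2^*\MO(c))\cong\BC P^k$, show via Bertini and a generic-points count that there is a unique, embedded, nondegenerate curve through $k$ generic points, and read off sign $+1$ from integrability. Your vanishing argument via positivity of intersections with a generic fiber is a reformulation of the paper's branched-covering observation. The nondegeneracy step is handled slightly differently: the paper invokes automatic transversality ($\langle [C],c_1(TX)\rangle = 2c+2d>0$ forces $D_C$ to be surjective) and then uses regularity of the incidence-variety projection, whereas you identify $\ker D_C$ with $H^0(\MO)/\BC f$ and argue the evaluation map is surjective; the latter implicitly uses $H^1(\MO_X)=0$, which holds here, so both routes close.

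The one genuine omission in your write-up is the justification that the count at the specific pair $(J_0,P)$ actually computes $\on{Gr}$. Taubes' invariant is defined using a generic admissible pair $(J,Q)$, and the integrable $J_0$ need not be admissible; the paper spends a full paragraph on this, producing via the implicit function theorem a smooth family $C_{(J,Q)}$ of nondegenerate curves for $(J,Q)$ near $(i,P)$, and then using Gromov compactness to rule out any other curve through $Q$ for such $(J,Q)$, so that the count is locally constant near $(i,P)$ and hence equals $\on{Gr}$ as computed at a nearby generic pair. Your opening appeal to ``invariance of $\on{Gr}$ under deformation of $J$'' does not by itself supply this, since deformation invariance is a statement about two generic configurations and does not automatically apply to a non-generic $J_0$ without the local compactness argument.
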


\begin{proof}[Proof of Theorem \ref{theorem:taubes_gromov_projective_space}]
We abbreviate $X\coloneqq \BC P^2$ and $Z\coloneqq [\BC P^1]\in H_2(X;\BZ)$. Let $d\in\BZ$. If $d<0$, it follows from the fact that any non-constant pseudo-holomorphic curve must have positive $\Omega$-area that $\on{Gr}(X,\Omega;dZ)=0$. If $d=0$, the empty set is the unique submanifold counted by $\on{Gr}$ and hence $\on{Gr}(X,\omega;0)=1$. Suppose that $d>0$. Then $I(dZ) = d(d+3) >0$. We set $k\coloneqq I(dZ)/2\in\BZ$. Let $i$ denote the standard integrable complex structure on $X$. Our goal is to show that there exists an open and dense subset $\MU\subset \on{Sym}_k(X)\setminus\Delta$ (here $\Delta$ denotes the fat diagonal) such that for all $P\in\MU$ the following is true:
\begin{itemize}
\item[($\ast$)] There exists exactly one $i$-holomorphic current in $X$ representing the homology class $dZ$ and passing through the points $P$. Moreover, this current consists of a single connected, embedded, non-degenerate $i$-curve of multiplicity $1$.
\end{itemize}
Before proving this assertion, let as argue that it indeed implies that $\on{Gr}(X,\Omega;dZ)=1$. The Gromov-Taubes invariant $\on{Gr}(X,\Omega;dZ)$ counts pseudo-holomorphic submanifolds for an admissible pair $(J,Q)$ consisting of a compatible almost complex structure $J\in \MJ(X,\Omega)$ and a set of $k$ points $Q\in\on{Sym}_k(X)\setminus\Delta$ (see \cite[Definition 4.2]{Tau96}). Let $P\in\MU$. In order to see that we may use $(i,P)$ to perform the count, we show that the count is constant as $(J,Q)$ ranges over a sufficiently small open neighbourhood $\MV\subset \MJ(X,\Omega)\times \MU$ of $(i,P)$. In fact, we claim that $\MV$ can be chosen such that ($\ast$) continues to hold for all $(J,Q)\in\MV$ in place of $(i,P)$. By the implicit function theorem, there exist a neighbourhood $\MV$ and a smooth family $C_{(J,Q)}$ of curves parametrized by $(J,Q)\in\MV$ and satisfying the properties in ($\ast$). We need to show that for $(J,Q)$ close to $(i,P)$, there are no other $J$-holomorphic currents representing $dZ$ and passing through $Q$. Assume by contradiction that this is false. Then we can find a sequence $(J_n,Q_n)$ converging to $(i,P)$ and a sequence of $J_n$-currents $C_n$ passing through $Q_n$ such that $[C_n]=dZ$ and $C_n\neq C_{(J_n,Q_n)}$. By Gromov compactness, we may pass to a subsequence such that $C_n$ converges to a $i$-current $C$ passing through $P$ and representing $dZ$. Since we assume that ($\ast$) holds for $(i,P)$, we must have $C=C_{(i,P)}$. Using that $C_{(i,P)}$ is embedded and non-degenerate, we deduce that $C_n=C_{(J_n,Q_n)}$ for $n$ sufficiently large, a contradiction. This shows that we may use $(i,P)$ to compute $\on{Gr}(X,\Omega;dZ)$. Since $i$ is integrable, the sign of the curve $C_{(i,P)}$ is $+1$. Hence $\on{Gr}(X,\Omega;dZ)=1$.\\
It remains to find an open and dense set $\MU$ such that ($\ast$) holds for all $P\in\MU$. We proceed in several steps.\\
{\bf Step 1:} {\it Let $\MO(d)$ denote the holomorphic line bundle on $X$ corresponding to the divisor $d\cdot\BC P^1$. We define $M\coloneqq \BP H^0(\MO(d))\cong \BC P^k$ to be the complex projectivization of the space of holomorphic global sections of $\MO(d)$. Then the set of $i$-holomorphic currents in $X$ representing $dZ$ is in bijective correspondence with the points of $M$.}\\
By the proper mapping theorem and Chow's theorem (see e.g. \cite{GH94}), $i$-holomorphic currents in $X$ are the same as effective divisors. If a $i$-holomorphic current in $X$ represents the homology class $dZ$, the holomorphic line bundle associated to the corresponding effective divisor is equal to $\MO(d)$. The assertion of step 1 now follows from the fact that effective divisors with associated line bundle $\MO(d)$ are in bijective correspondence with $\BP H^0(\MO(d))$. The space of sections $H^0(\MO(d))$ can be identified with the space of complex polynomials in three variables with are homogeneous of degree $d$. This space has complex dimension $\binom{d+2}{2}=k+1$. Hence $M\cong\BC P^k$.\\
{\bf Step 2:} {\it The set of all $f\in M$ such that the corresponding current in $X$ consists of a single connected and embedded curve with multiplicity $1$ is non-empty and Zariski open.}\\
It follows from Berini's theorem (see e.g. \cite{GH94}) that the $i$-holomorphic current corresponding to a generic $f\in M$ is embedded. Any embedded holomorphic current in $X$ must be connected for intersection reasons.\\
{\bf Step 3:} {\it We define the subvariety $V\subset M \times X^k$ by
\begin{equation}
V\coloneqq \{(f,p_1,\dots,p_k)\in M \times X^k \mid f \enspace \text{vanishes at all}\enspace p_j\}
\end{equation}
and abbreviate the natural projections by $\pi:V\rightarrow M$ and $\on{ev}:V \rightarrow X^k$. The complex dimension of $V$ is $2k$. The set of all $P\in X^k$ such that $\on{ev}^{-1}(P)$ consists of a single smooth point of $V$ which is also a regular point of $\on{ev}$ is non-empty and Zariski-open.}\\
We identify the space of global sections $H^0(\MO(d))$ with the $(k+1)$-dimensional space of all complex polynomials in three variables which are homogeneous of degree $d$. Requiring that a polynomial $f\in H^0(\MO(d))$ vanishes at some point $p\in X$ is equivalent to imposing a single homogeneous linear equation on the $k+1$ complex coefficients of $f$. Thus the condition that $f(p_j)=0$ for all $1\leq j\leq k$ is equivalent to $k$ homogeneous linear equations for the coefficients of $f$. If the tuple $(p_1,\dots,p_k)$ is generic (i.e.\ chosen from a non-empty Zariski open subset of $X^k$), then the solution space of this homogeneous linear system of $k$ equations for the $k+1$ coefficients of $f$ has complex dimension $1$. Therefore, there exists a unique $f\in\BP H^0(\MO(d))$ vanishing on $k$ generic points $p_1,\dots ,p_k$. This shows that $\on{ev}^{-1}(P)$ has cardinality $1$ for a generic $P\in X^k$. Since the singular locus of $V$ has complex dimension at most $2k-1$ and $X^k$ has complex dimension $2k$, the preimage $\on{ev}^{-1}(P)$ only consists of smooth points of $V$ for generic $P\in X^k$. A generic $P\in X^k$ must in addition be a regular value of $\on{ev}$. This finishes the proof of step 3.\\
{\bf Step 4:} Let us define $\MU\subset \on{Sym}_k(X)$ to be the image of the set of all points $P\in X^k$ such that
\begin{enumerate}
\item $P$ consists of $k$ distinct points in $X$.
\item $\on{ev}^{-1}(P)$ consists of a single smooth point of $V$ which is also a regular value of $\on{ev}$.
\item The current in $X$ corresponding to $\pi(\on{ev}^{-1}(P))$ consists of a single connected and embedded curve with multiplicity $1$.
\end{enumerate}
By the above steps, $\MU$ is non-empty and Zariski open in $\on{Sym}_k(X)$. We show that ($\ast$) holds for all $P\in\MU$. It is immediate from the above discussion that for every $P\in\MU$ there exists a unique $i$-holomorphic current in $X$ passing through $P$ and representing the homology class $dZ$. Moreover, this current consists of a single connected embedded holomorphic curve $C$ with multiplicity $1$. Let $N$ denote the normal bundle of $C$. The bundle $\Lambda^{0,1}T^*C\otimes N$ has first Chern number
\begin{equation}
\langle [C],c_1(\Lambda^{0,1}T^*C\otimes N) \rangle = \langle [C],c_1(TC\oplus N) \rangle = \langle [C],c_1(TX)\rangle = 3d
\end{equation}
which is positive. Thus we have automatic transversality (see \cite{HLS97}) and the operator $D_C$ must be surjective. It follows from the fact that $P$ is a regular value of $\on{ev}$, that the composite operator $D_C\oplus\on{ev}_P$ continues to be surjective. Since $D_C\oplus\on{ev}_P$ is an index-$0$ Fredholm operator (at least after passing to suitable Sobolev completions), it must be invertible, proving that $C$ is non-degenerate.
\end{proof}

\begin{proof}[Proof of Theorem \ref{theorem:taubes_gromov_sphere_cross_sphere}]
We follow the strategy of the proof of Theorem \ref{theorem:taubes_gromov_projective_space}. We equip $X=\BC P^1\times \BC P^1$ with the standard integrable complex structure $i$. Let $c,d\in\BZ$ be integers and abbreviate
\begin{equation}
Z\coloneqq c\cdot[S^2\times *] + d\cdot [*\times S^2] \in H_2(X;\BZ).
\end{equation}
The composition of any $i$-holomorphic curve in $X$ with the projection to either of the $\BC P^1$-factors is either constant or a holomorphic branched covering. This implies that if $c$ or $d$ are negative, then the homology class $Z$ is not represented by any $i$-holomorphic current in $X$. Hence the Gromov-Taubes invariant vanishes in this case. If $c=d=0$, the empty set is counted as the unique current representing $Z$ and hence $\on{Gr}(X,\Omega;0)=1$. Assume now that $c,d\geq 0$ and $c+d>0$. Then $I(Z) = 2c+2d+2cd >0$. We set $k\coloneqq I(Z)/2\in\BZ$. Our goal is to show that there exists an open and dense subset $\MU\subset\on{Sym}_k(X)\setminus\Delta$ such that for all $P\in\MU$ the following assertion is true:
\begin{itemize}
\item[($\ast$)] There exists exactly one $i$-holomorphic current in $X$ representing the homology class $Z$ and passing through the points $P$. Moreover, this current embedded and non-degenerate and all its connected components have multiplicity $1$.
\end{itemize}
The same argument given in the proof of Theorem \ref{theorem:taubes_gromov_projective_space} shows that ($\ast$) implies that $\on{Gr}(X,\Omega;Z)=1$. We show existence of an open and dense set $\MU$ satisfying ($\ast$) is several steps.\\
{\bf Step 1:} {\it For every integer $m\in\BZ$, let $\MO(m)$ denote the degree $m$ holomorphic line bundle on $\BC P^1$. For $j\in\{1,2\}$, let $\pi_j:X\rightarrow \BC P^1$ denote the projection onto the $j$-th factor. We define $M\coloneqq \BP H^0(\pi_1^*\MO(d)\otimes\pi_2^*\MO(c))\cong\BC P^k$ to be the projectivization of the space of holomorphic sections of the line bundle $\pi_1^*\MO(d)\otimes\pi_2^*\MO(c)$ on $X$. Then points in $M$ are in bijective correspondence with $i$-holomorphic currents in $X$ representing the homology class $Z$.}\\
Using the proper mapping theorem and Chow's theorem, we deduce that $i$-holomorphic currents in $X$ representing the homology class $Z$ are the same as effective divisors with associated holomoprhic line bundle $\pi_1^*\MO(d)\otimes\pi_2^*\MO(c)$. Such divisors are in bijection with the projectivization $M$. Holomorphic sections of $\pi_1^*\MO(d)\otimes\pi_2^*\MO(c)$ can be identified with complex polynomials in four variables $X_1,Y_1,X_2,Y_2$ which are homogeneous of degree $d$ in $X_1$ and $Y_1$ and homogeneous of degree $c$ in $X_2$ and $Y_2$. The space of such polynomials has complex dimension $(d+1)(c+1) = k+1$. Thus $M\cong\BC P^k$.\\
{\bf Step 2:} {\it The set of all $f\in M$ such that the corresponding current in $X$ is embedded and has only connected components of multiplicity $1$ is non-empty and Zariski open.}\\
This is immediate from Bertini's theorem (see e.g. \cite{GH94}).\\
{\bf Step 3:} {\it We define the subvariety $V\subset M \times X^k$ by
\begin{equation}
V\coloneqq \{(f,p_1,\dots,p_k)\in M \times X^k \mid f \enspace \text{vanishes at all}\enspace p_j\}
\end{equation}
and abbreviate the natural projections by $\pi:V\rightarrow M$ and $\on{ev}:V \rightarrow X^k$. The complex dimension of $V$ is $2k$. The set of all $P\in X^k$ such that $\on{ev}^{-1}(P)$ consists of a single smooth point of $V$ which is also a regular point of $\on{ev}$ is non-empty and Zariski-open.}\\
As mentioned in step 1, we can identify the space of global sections $H^0(\pi_1^*\MO(d)\otimes\pi_2^*\MO(c))$ with the $(k+1)$-dimensional space of all complex polynomials in four variables which are homogeneous of degree $d$ in the first two variables and homogeneous of degree $c$ in the last two. The remaining argument carries over from step 3 in the proof of Theorem \ref{theorem:taubes_gromov_projective_space} verbatim.\\
{\bf Step 4:} Let us define $\MU\subset \on{Sym}_k(X)$ to be the image of the set of all points $P\in X^k$ such that
\begin{enumerate}
\item $P$ consists of $k$ distinct points in $X$.
\item $\on{ev}^{-1}(P)$ consists of a single smooth point of $V$ which is also a regular value of $\on{ev}$.
\item The current in $X$ corresponding to $\pi(\on{ev}^{-1}(P))$ is embedded and all its connected components have multiplicity $1$.
\end{enumerate}
By the above steps, $\MU$ is non-empty and Zariski open in $\on{Sym}_k(X)$. We show that ($\ast$) holds for all $P\in\MU$. It is immediate from the above discussion that for every $P\in\MU$ there exists a unique $i$-holomorphic current in $X$ passing through $P$ and representing the homology class $Z$. Moreover, this current $C$ is embedded and all its connected components have multiplicity $1$. Let $N$ denote the normal bundle of $C$. The bundle $\Lambda^{0,1}T^*C\otimes N$ has first Chern number
\begin{equation}
\langle [C],c_1(\Lambda^{0,1}T^*C\otimes N) \rangle = \langle [C],c_1(TC\oplus N) \rangle = \langle [C],c_1(TX)\rangle = 2c+2d.
\end{equation}
Thus we have automatic transversality (see \cite{HLS97}) and the operator $D_C$ must be surjective. It follows from the fact that $P$ is a regular value of $\on{ev}$, that the composite operator $D_C\oplus\on{ev}_P$ continues to be surjective. Since $D_C\oplus\on{ev}_P$ is an index-$0$ Fredholm operator (at least after passing to suitable Sobolev completions), it must be invertible, proving that $C$ is non-degenerate.
\end{proof}

\subsection{Computations with Seiberg-Witten theory}
\label{subsec:computations_with_seiberg_witten_theory}

We give a brief review of Taubes' ``Seiberg-Witten = Gromov" theorem \cite{Tau00}. We recall that the Seiberg-Witten invariant of a closed, oriented $4$-manifold $X$ satisfying $b_2^+(X)\geq 2$ is a map
\begin{equation}
\on{SW}(X;\cdot) : \on{Spin^c}(X)\rightarrow\BZ
\end{equation}
assigning an integer $\on{SW}(X;\mathfrak{s})$ to every $\on{Spin^c}$ structure $\mathfrak{s}$ on $X$. The Seiberg-Witten invariant extends to the case $b_2^+(X)=1$. In this case the invariant in addition depends on the choice of one of two chambers, which is equivalent to the choice of an orientation of $H^{2+}(X;\BR)$. Fixing such an orientation results in two invariants $\on{SW}^\pm(X;\cdot)$. These two invariants are related by a wall crossing formula due to \cite{LL95} and \cite{OO96}.\\
A symplectic form $\Omega$ on $X$ determines a $\on{Spin^c}$ structure $\mathfrak{s}_\Omega$. Since $\on{Spin^c}(X)$ is a torsor over $H^2(X;\BZ)$, the distinguished $\on{Spin^c}$ structure $\mathfrak{s}_\Omega$ determines an identification
\begin{equation}
H^2(X;\BZ) \cong \on{Spin^c}(X) \quad E \leftrightarrow \mathfrak{s}_\Omega + E.
\end{equation}
In the case $b_2^+(X)\geq 2$, Taubes showed that
\begin{equation}
\on{Gr}(X,\Omega;Z) = \on{SW}(X;\mathfrak{s}_\Omega + \on{PD}(Z)).
\end{equation}
If $b_2^+(X)=1$, then the symplectic form $\Omega$ determines an orientation of $H^{2+}(X;\BR)$ and one has
\begin{equation}
\label{eq:taubes_gromov_equals_seiberg_witten_bplus_1}
\on{Gr}(X,\Omega;Z) = \on{SW}^+(X;\mathfrak{s}_\Omega + \on{PD}(Z)).
\end{equation}
The following computation of Taubes-Gromov invariants is based on \eqref{eq:taubes_gromov_equals_seiberg_witten_bplus_1}.

\begin{theorem}
\label{theorem:taubes_gromov_surface_cross_sphere}
Let $\Sigma$ be a closed, orientable surface of genus $g$ and endow the product $X\coloneqq \Sigma\times S^2$ with a product symplectic form $\Omega$. Then
\begin{equation}
\label{eq:taubes_gromov_surface_cross_sphere}
\on{Gr}(X,\Omega;c\cdot[\Sigma\times *] + d\cdot [*\times S^2]) =
\begin{cases}
(1+c)^g & \text{if}\enspace c\geq 0 \enspace\text{and}\enspace d\geq (g-1)\frac{c}{c+1} \\
0 & \text{otherwise}.
\end{cases}
\end{equation}
\end{theorem}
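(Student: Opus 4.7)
The plan is to combine Taubes' ``SW $=$ Gr'' theorem (equation~\eqref{eq:taubes_gromov_equals_seiberg_witten_bplus_1}) with an explicit algebro-geometric count of divisors on $X = \Sigma\times S^2$.

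First, the ``otherwise'' branch. If $c<0$ then no holomorphic current of class $Z$ can exist: any irreducible component of such a current has non-negative intersection with the two fiber classes $[\Sigma\times *]$ and $[* \times S^2]$ (by positivity of intersections), and summing over components forces $c,d\geq 0$. If $c\geq 0$ but $d(c+1)<(g-1)c$, then $I(Z) = 2((d-g+1)c+d) < 0$ and the Gromov--Taubes invariant vanishes by definition. This disposes of every case outside the asserted non-vanishing range.

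Assume henceforth $c\geq 0$ and $I(Z)\geq 0$. The intersection form on $H^2(X;\BR)\cong\BR^2$ is hyperbolic, so $b_2^+(X)=1$, and Taubes' theorem yields
\begin{equation*}
\on{Gr}(X,\Omega;Z)=\on{SW}^+(X;\mathfrak{s}_\Omega+\on{PD}(Z)).
\end{equation*}
To compute this I would exploit the K\"ahler structure on $X$ obtained from the product of integrable complex structures. For a K\"ahler surface with $b_2^+=1$, the Seiberg--Witten equations in the symplectic chamber admit an algebraic description (Witten, Brussee): solutions correspond to pairs $(L,s)$ with $L$ a holomorphic line bundle of appropriate type and $s$ a holomorphic section whose vanishing divisor represents $Z$.

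For $X=\Sigma\times S^2$ such a divisor is a degree-$c$ multi-section of the trivial $\BC P^1$-bundle $\pi_1:X\to\Sigma$, parametrized by a pair $(L,[s_0:\cdots:s_c])$ with $L\in \on{Pic}^d(\Sigma)$ and $[s_0:\cdots:s_c]\in\mathbb{P}\bigl(H^0(L)^{\oplus(c+1)}\bigr)$ having no common zero, the multi-section being cut out by $\sum_{j=0}^c s_j(p)\,y^{c-j}=0$. The $k=I(Z)/2$ generic point incidences impose $k$ linear equations on the tuple $(s_0,\ldots,s_c)$ for each $L$, and the count reduces to a Thom--Porteous-type degeneracy-locus calculation on the Jacobian $J=\on{Pic}^d(\Sigma)$. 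Writing $\mathcal{E}=(\pi_*\mathcal{P})^{\oplus(c+1)}$ for the evaluation bundle built from the Poincar\'e bundle $\mathcal{P}$ on $\Sigma\times J$, the expected count equals $\int_{J} s_g(\mathcal{E})$. Using the classical identity $c(\pi_*\mathcal{P})=e^{-\theta}$ (with $\theta$ the theta class, so $\theta^g=g!$ on $J$) one gets $c(\mathcal{E})=e^{-(c+1)\theta}$, whence
\begin{equation*}
s_g(\mathcal{E})=\frac{(c+1)^g\,\theta^g}{g!}\quad\text{and}\quad \int_{J} s_g(\mathcal{E})=(c+1)^g,
\end{equation*}
as claimed. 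Integrability of the product complex structure guarantees that each contributing curve enters with Gromov--Taubes sign $+1$ (the operator $D_C$ is complex linear), and the multiply-covered tori of trivial normal bundle mentioned in Remark~\ref{remark:ignore_multiply_covered_tori} play no role since the divisors at hand are genuine multi-sections for $c,d>0$.

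The main obstacle is the chain of identifications: verifying that the symplectic-chamber Seiberg--Witten moduli for $\Sigma\times S^2$ reduces to the algebraic moduli of multi-sections in exactly the form above, and justifying the Thom--Porteous reduction (including excess-intersection contributions from the locus where the $s_j$ acquire a common zero, i.e.\ configurations with a spurious vertical fiber component). As an alternative to this bundle-theoretic computation one may invoke the Li--Liu formula for Seiberg--Witten invariants of ruled surfaces, which packages exactly this information and delivers $(1+c)^g$ directly.
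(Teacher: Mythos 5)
Your primary route to the value $(1+c)^g$ --- an explicit Brill--Noether / degeneracy-locus count of effective divisors on the K\"ahler surface $\Sigma\times\BP^1$ --- is genuinely different from the paper's. The paper combines Taubes' ``SW $=$ Gr'' with the Li--Liu/Ohta--Ono wall crossing formula
\begin{equation*}
\on{SW}^+(X;\mathfrak{s}_\Omega+E) - \on{SW}^-(X;\mathfrak{s}_\Omega + E) = \bigl( 1 + \langle [*\times S^2],E \rangle\bigr)^g
\end{equation*}
and then uses the existence of a positive scalar curvature metric on $\Sigma\times S^2$: for any $\on{Spin^c}$ structure at least one of $\on{SW}^\pm$ vanishes, and depending on the sign of $\langle [*\times S^2],E\rangle$ it is $\on{SW}^-$ (if $\geq 0$), $\on{SW}^+$ (if $\leq -1$), or both (if $=-1$). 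The wall crossing formula then hands you $\on{SW}^+$ with no algebraic geometry whatsoever. Your closing ``alternative'' aside --- invoke the Li--Liu formula for ruled surfaces --- is in fact exactly the paper's argument; the bundle-theoretic computation is your original contribution.

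That computation has a real gap beyond the excess-intersection issues you already flag. The identity $c(\pi_*\MP)=e^{-\theta}$ and the resulting description of the parameter space as a projective bundle $\BP(\pi_*\MP)^{\oplus(c+1)}$ over $\on{Pic}^d(\Sigma)$ require $R^1\pi_*\MP=0$, i.e.\ $d\geq 2g-1$, so that the direct image is an honest vector bundle of rank $d-g+1$. The theorem is asserted for all $d\geq (g-1)\tfrac{c}{c+1}$, and for $c\geq 1$ this allows $d$ far below $2g-1$. In that range $h^0(L)$ jumps over the Brill--Noether strata of the Jacobian, the na\"ive projective-bundle picture collapses, and your Segre class integral $\int_J s_g(\ME)$ is not literally the curve count. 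Repairing this (via $K$-theoretic pushforward and GRR, or a careful deformation/excess argument, or by first establishing the answer for large $d$ and then transporting it via the very wall crossing formula you were trying to avoid) is exactly the technical escalation that the paper's wall-crossing-plus-PSC argument sidesteps. Your preliminary disposal of the ``otherwise'' cases is fine (for $c<0$ positivity of intersection with the sphere fiber rules out curves, and for $c\geq 0$ the stated inequality failing is equivalent to $I(Z)<0$, which also absorbs $d<0$), but note that in the paper's framework even these cases drop out of the same PSC vanishing, so the case analysis there is more uniform.
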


\begin{proof}
By \eqref{eq:taubes_gromov_equals_seiberg_witten_bplus_1} it suffices to compute $\on{SW}^+(X;\cdot)$. We refer to \cite{Sal99} for a detailed exposition of this computation and merely sketch some key elements. The main ingredient is a wall crossing formula due to \cite{LL95} and \cite{OO96}. It follows from this formula (see \cite[Corollary 1.4]{LL95}) that
\begin{equation}
\label{eq:wall_crossing_formula}
\on{SW}^+(X;\mathfrak{s}_\Omega+E) - \on{SW}^-(X;\mathfrak{s}_\Omega + E) = \left( 1 + \left\langle [*\times S^2],E \right\rangle\right)^g
\end{equation}
for all $E\in H^2(X;\BZ)$ satisfying $\langle [X],c_1(X)\cdot E + E\cdot E \rangle \geq 0$.\\
The fact that $\Sigma\times S^2$ admits a Riemannian metric of positive scalar curvature plays an essential role. It implies that for any $E\in H^2(X;\BZ)$ at least one of the invariants $\on{SW}^\pm(X;\mathfrak{s}_\Omega+E)$ vanishes. As explained in \cite{Sal99}, if the number $\langle [*\times S^2],E \rangle$ is non-negative, then $\on{SW}^-$ must vanish. If this number is less than $-1$ then $\on{SW}^+$ must vanish and if it is equal to $-1$ then both $\on{SW}^\pm$ vanish. This information together with \eqref{eq:wall_crossing_formula} and \eqref{eq:taubes_gromov_equals_seiberg_witten_bplus_1} readily yields \eqref{eq:taubes_gromov_surface_cross_sphere}.
\end{proof}

\bibliographystyle{plain}
\bibliography{elem_clos_lem}

\end{document}